\definecolor{trp}{rgb}{1,1,1}
\definecolor{red}{rgb}{1,0,.2}
\newtheorem{theorem}{Theorem}[section]
\theoremstyle{plain}
\newtheorem{claim}[theorem]{Claim}
\newtheorem{conjecture}{Conjecture}
\newtheorem{lemma}[theorem]{Lemma}
\newtheorem{problem}{Open Problem}
\newtheorem{prop}[theorem]{Proposition}
\newtheorem{remark}[theorem]{Remark}
\numberwithin{equation}{section}
\newcommand{\N}{\mathbb{N}}
\newcommand{\ii}{\mathbf{i}}
\newcommand{\jj}{\mathbf{j}}
\begin{document}
\title[Distance between natural numbers based on their prime signature]{Distance between natural numbers based on their prime signature}

\author{Istv\'an B Kolossv\'ary}
\address{Istv\'an B Kolossv\'ary, Boston University, Department of Biomedical Engineering, \newline Boston, MA 02215, U. S. A.} \email{ikolossv@bu.edu}

\author{Istv\'an T Kolossv\'ary}
\address{Istv\'an T Kolossv\'ary, The University of St Andrews,  School of Mathematics and Statistics, \newline St Andrews, KY16 9SS, Scotland} \email{itk1@st-andrews.ac.uk}

\thanks{ 2020 {\em Mathematics Subject Classification.} Primary 11N64 11K65 Secondary 11N37 11Y70 11-04 
\\ \indent
{\em Key words and phrases.} power-free numbers, probabilistic number theory, asymptotic result on arithmetic function, prime grid, distribution of primes}

\begin{abstract}
We define a new metric between natural numbers induced by the $\ell_\infty$ norm of their unique prime signatures. In this space, we look at the natural analog of the number line and study the arithmetic function $L_\infty(N)$, which tabulates the cumulative sum of distances between consecutive natural numbers up to $N$ in this new metric. 

Our main result is to identify the positive and finite limit of the sequence $L_\infty(N)/N$ as the expectation of a certain random variable. The main technical contribution is to show with elementary probability that for $K=1,2$ or $3$ and $\omega_0,\ldots,\omega_K\geq 2$ the following asymptotic density holds
$$
\lim_{n\to\infty}\frac{\big|\big\{M\leq n:\; \|M-j\|_\infty <\omega_j \text{ for } j=0,\ldots,K \big\}\big|}{n} = \prod_{p:\, \mathrm{prime}}\! \bigg( 1- \sum_{j=0}^K\frac{1}{p^{\omega_j}} \bigg)\,.
$$
This is a generalization of the formula for $k$-free numbers, i.e. when $\omega_0=\ldots=\omega_K=k$. The random variable is derived from the joint distribution when $K=1$.

As an application, we obtain a modified version of the prime number theorem. Our computations up to $N=10^{12}$ have also revealed that prime gaps show a considerably richer structure than on the traditional number line.  Moreover, we raise additional open problems, which could be of independent interest. 
\end{abstract}
%\date{\today}

\maketitle

\thispagestyle{empty}

\section{Introducing distances on the prime grid}

The natural numbers form a totally ordered set, traditionally visualized by plotting them as evenly spaced points on the positive half of the number line. This representation gives a clear idea of their magnitude but not much else. Another possible method of visualizing the natural numbers is called the Ulam spiral \cite{UlamSpiral}, where the numbers are placed in increasing order on the grid points of $\mathbb{Z}^2$ starting with 1 at $(0,0)$ and spiraling outwards, so $2\to(1,0),\, 3\to(1,1),\, 4\to(0,1),\, 5\to(-1,1),\, 6\to(-1,0)$ and so on. Interestingly, prime numbers tend to line up along certain diagonal lines corresponding to specific quadratic polynomials, which could be explained by a conjecture of Hardy and Littlewood \cite{Hardy1923} (if proven to be true).

Another natural, off-the-number-line spatial representation is suggested by the fundamental theorem of arithmetic. Each natural number is a unique grid point in an infinite dimensional coordinate system ``spanned'' by the prime numbers, which we coin the \emph{prime grid}. The position of two natural numbers in this grid leads us to define new distances between them. The goal is to study how this affects the natural analogs of the number line and the distribution of primes. 

%%%%%%%%%%%%%%%%%%%%%%%%%%%%%%%%%%%%%%%%%%%%%%%%%%%%%%%%%%%%%%%%%%%%%%
\subsection{The prime grid}
The factorization theorem states that every natural number $N$ can be uniquely identified with an infinite sequence $\ii^{N}=(i_1,i_2,\ldots)$ of non-negative integers, called the prime signature of $N$, so that
\begin{equation*}
N= p_1^{i_1}p_2^{i_2}\ldots p_k^{i_k}\ldots \, ,
\end{equation*}
where throughout the paper $\{p_k\}_{k=1}^\infty$ denote the prime numbers in ascending order. The number $1$ is represented by the sequence $\mathbf{0}=(0,0,\ldots)$. This gives a bijection between the positive integers and the space of all infinite sequences that consist only of zeros except for a finite number of positive integers. We think of $\mathbf{0}$ as the origin of a coordinate system whose axis are indexed by the prime numbers and each natural number is a grid point on this \textit{prime grid}. The addition of two signatures $\ii^N+\jj^M$ and also the multiplication by a scalar $n\in\mathbb{N}$ is done coordinate-wise. The former represents multiplication $NM$, while the latter raises $N$ to the $n$th power. 

\begin{remark}
ne can canonically extend the grid to get a module over the integers by allowing negative integers in a signature as well to represent all positive rational numbers. In this case subtraction $\ii^N-\jj^M$, done coordinate-wise, represents division $N/M$. 

Another possible generalization is to consider supernatural numbers or Steinitz numbers, i.e. signatures with possibly infinitely many non-zero coordinates that may take the value $\infty$ as well. However, we do not pursue these directions and consider only signatures of natural numbers henceforth.
\end{remark}

We consider the $\ell_\infty$ norm of the signatures of natural numbers, i.e.
\begin{equation*}
\|N\|_\infty=\|\ii^N\|_\infty=\max \{i_1,i_2,\ldots\}.
\end{equation*}
The norm naturally induces the $\ell_\infty$ metric, also referred to as the Chebyshev distance, which for two natural numbers $N$ and $M$ with signatures $\ii^N$ and $\jj^M$ is defined as
\begin{equation*}
d_{\infty}(N,M) = d_{\infty}(\ii^N,\jj^M) = \max\{|i_1-j_1|,|i_2-j_2|,\ldots\}.
\end{equation*}
Recall, a natural number $N$ is called $k$-free if $\|N\|_\infty<k$. Hence, the metric naturally collects $k$-free numbers: they are enclosed in the ball of radius $k-1$ (centered at $\mathbf 0$). In particular, the unit ball consists of the square-free numbers. We refer to the numbers $M$ for which $\|M\|_\infty=k$ as the Chebyshev contour at distance $k$. It is well-known that the number of $k$-free numbers up to $M$ follow an asymptotic of $M/\zeta(k)+ O(\sqrt[k]{M})$, where $\zeta(k)$ is the Riemann zeta function and $O$ is the usual big-$O$ notation. Hence, the Chebyshev contours follow the asymptotic
\begin{equation}\label{eq:vsz_Ninfty=k}
\lim_{n\to\infty}\frac{|\{M\leq n \text{ such that } \|M\|_\infty =k \}|}{n} = \frac{1}{\zeta(k+1)} - \frac{1}{\zeta(k)}.
\end{equation}
The main technical contribution of the paper is to generalize this asymptotic density to sets concerning consecutive numbers like $\{M\leq n \text{ such that } \|M\|_\infty =k,\, \|M-1\|_\infty =\ell \}$. With this new distance between the natural numbers, we wish to study an analog of the number line and also the distribution of prime numbers.

%%%%%%%%%%%%%%%%%%%%%%%%%%%%%%%%%%%%%%%%%%%%%%%%%%%%%%%%%%%%%%%%%%%%%%
\subsection{The number trail: an analog of the number line}\label{subsec:NumberTrail}

We focus on a particular object on the prime grid, a zigzag path that starts at $\mathbf 0$ and crisscrosses through every single grid point on the prime grid in the order of the increasing sequence of the natural numbers. We term this path the \textit{number trail} and define an arithmetic function $L_\infty(N)$ tabulating the total length of the number trail up to $N$ using the chosen metric. That is
\begin{equation} \label{def:Linfty} 
L_\infty (N) :=\sum_{M=2}^{N}d_\infty(M,M-1) = \sum_{M=2}^{N} \max\{\|M\|_\infty,\|M-1\|_\infty\}, 
\end{equation}
where the second equality holds, since $M$ and $M-1$ are always coprime. $L(1)=0$ by definition. We think of it as an analog of the traditional number line, where $L(N)=\sum_{M=1}^{N}|M-(M-1)|=N$. The sequence $\{L_{\infty}(N)\}_{N=2}^\infty$ is indexed in the On-line Encyclopedia of Integer Sequences (OEIS) database as sequence \href{https://oeis.org/A334573}{A334573}. 

It follows from \eqref{def:Linfty} that the growth of $L_\infty(N)$ depends on the deterministic sequence
\begin{equation*}
\pmb{\Omega} = \|2\|_\infty,\|3\|_\infty,\|4\|_\infty,\ldots .
\end{equation*}
More precisely, the asymptotic in \eqref{eq:vsz_Ninfty=k} is not enough, but rather the distribution of pairs $(\|M\|_\infty,\|M-1\|_\infty)$ determine $L_\infty$. This is the content of our main technical result.

\begin{theorem}\label{thm:00}
	For any $\omega_0,\omega_1\geq 2$, the following asymptotic density is satisfied:
	\begin{equation}\label{eq:10}
	\lim_{n\to\infty}\frac{\big|\big\{M\leq n:\; \|M\|_\infty <\omega_0 \text{ and } \|M-1\|_\infty <\omega_1\big\}\big|}{n} = \prod_{p:\, \mathrm{prime}}\! \bigg( 1- \frac{1}{p^{\omega_0}} - \frac{1}{p^{\omega_1}}\bigg).
	\end{equation}
	Let $\pi(\omega_0,\omega_1)$ denote the limiting constant in \eqref{eq:10}, moreover, let $N_n$ denote a random integer chosen uniformly at random from the set $\{1,2,\ldots,n\}$.
	
	Then the sequence of random variables $\big\{(\|N_n\|_\infty,\|N_n-1\|_\infty)\big\}_{n}$ has a distributional limit
	\begin{equation*}
	(\|N_n\|_\infty,\|N_n-1\|_\infty) \stackrel{d}{\to} (X_0,X_1),
	\end{equation*}
	with probability mass function $\mathds{P}\big( (X_0,X_1)=(k,\ell) \big)$ equal to
	\begin{equation*}
	\begin{cases}
	\pi(2,2), &\text{if } k=\ell=1, \\
	\pi(2,\ell+1)-\pi(2,\ell), &\text{if } k=1,\ell\geq 2,\\
	\pi(k+1,2)-\pi(k,2), &\text{if } k\geq 2, \ell=1,\\
	\pi(k+1,\ell+1)-\pi(k+1,\ell)-\pi(k,\ell+1)+\pi(k,\ell), &\text{if } k,\ell\geq 2.
	\end{cases}
	\end{equation*}
	In particular, the limit of the expectation of $Y_n:= \max\{\|N_n\|_\infty,\|N_n-1\|_\infty\}$ is equal to
	\begin{equation*}
	C_0:=\lim_{n\to\infty} \mathds{E} Y_n = \mathds{E}\max\{X_0,X_1\}=2.288369512646\ldots.
	\end{equation*}
\end{theorem}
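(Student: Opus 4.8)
The plan is to prove the three assertions in sequence, deriving the distributional limit and the expectation from the density formula \eqref{eq:10}, which is the technical core. For \eqref{eq:10}, observe that $\|M\|_\infty<\omega_0$ says exactly that $M$ is $\omega_0$-free and $\|M-1\|_\infty<\omega_1$ that $M-1$ is $\omega_1$-free. For each prime $p$ I would introduce the ``bad'' events $B_p=\{p^{\omega_0}\mid M\}$ and $C_p=\{p^{\omega_1}\mid M-1\}$ and count those $M\le n$ lying in no $B_p$ and no $C_p$. The one crucial arithmetic observation is that $B_p$ and $C_p$ are \emph{mutually exclusive}: since $\gcd(M,M-1)=1$, the prime $p$ cannot divide both $M$ and $M-1$. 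Hence, reducing modulo $p^{\max(\omega_0,\omega_1)}$, the proportion of residues avoiding both bad events is $1-p^{-\omega_0}-p^{-\omega_1}$ (a subtraction, not the product $(1-p^{-\omega_0})(1-p^{-\omega_1})$ one would get from independence), and the Chinese Remainder Theorem renders these local conditions independent across distinct primes. This already explains the shape of the right-hand side.

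Turning this heuristic into a theorem calls for a truncated sieve together with a squeezing argument, and this is where I expect the \textbf{main difficulty} to lie, since for $\omega_0=\omega_1=2$ the naive inclusion--exclusion error term is of order $n$ and fails to close. Fix $r$ and let $D_r(n)$ count the $M\le n$ avoiding $B_p\cup C_p$ for the first $r$ primes only. By CRT the admissible residues modulo $\prod_{k\le r}p_k^{\max(\omega_0,\omega_1)}$ have exact density $P_r:=\prod_{k\le r}(1-p_k^{-\omega_0}-p_k^{-\omega_1})$, so $D_r(n)/n\to P_r$ for each fixed $r$. Writing $D(n)$ for the full count, monotonicity $D(n)\le D_r(n)$ gives $\limsup_n D(n)/n\le P_r$. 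For the matching lower bound, $D_r(n)-D(n)$ is bounded by the number of $M\le n$ with $p^{\omega_0}\mid M$ or $p^{\omega_1}\mid M-1$ for some prime $p>p_r$, hence by $n\sum_{p>p_r}(p^{-\omega_0}+p^{-\omega_1})=:n\,\varepsilon_r$; since $\omega_0,\omega_1\ge2$ this tail converges and $\varepsilon_r\to0$. Letting $r\to\infty$ squeezes $\lim_n D(n)/n$ to $\prod_p(1-p^{-\omega_0}-p^{-\omega_1})$, and the same summability shows this product converges to a positive limit.

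The distributional statement then follows formally. Since $X_0,X_1\ge1$, the limiting joint distribution function is $F(k,\ell)=\mathds{P}(X_0\le k,X_1\le\ell)=\pi(k+1,\ell+1)$, and applying the two-dimensional difference $F(k,\ell)-F(k-1,\ell)-F(k,\ell-1)+F(k-1,\ell-1)$, with boundary conventions $F(0,\cdot)=F(\cdot,0)=0$, reproduces the four cases of the stated probability mass function. Because each entry is a finite linear combination of quantities converging by \eqref{eq:10}, the mass function of $(\|N_n\|_\infty,\|N_n-1\|_\infty)$ converges pointwise to the claimed one; and since $\pi(\omega_0,\omega_1)=\prod_p(1-p^{-\omega_0}-p^{-\omega_1})\to1$ as $\omega_0,\omega_1\to\infty$, no mass escapes to infinity, so the limit is a genuine probability distribution and the convergence holds in distribution on $\mathbb{N}^2$.

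Finally, for the expectation I would use the tail-sum formula for nonnegative integer variables. Here $\{Y_n<m\}=\{\|N_n\|_\infty<m,\ \|N_n-1\|_\infty<m\}$, so $\mathds{P}(Y_n\ge m)\to 1-\pi(m,m)$ for every $m$ by \eqref{eq:10}, giving $\mathds{E}\max\{X_0,X_1\}=\sum_{m\ge1}(1-\pi(m,m))$, a convergent series that yields the constant $C_0$. To pass the limit through the expectation for the unbounded $\max$, I would verify uniform integrability via the $n$-uniform bound $\mathds{P}(Y_n\ge m)\le 2\sum_p p^{-m}$, obtained by counting non-$m$-free $M$ and non-$m$-free $M-1$ separately. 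As $\sum_{m\ge2}\sum_p p^{-m}\le\sum_{m\ge2}(\zeta(m)-1)=1<\infty$, this dominating sequence is summable, so dominated convergence applied to $\mathds{E}Y_n=\sum_{m\ge1}\mathds{P}(Y_n\ge m)$ delivers $\mathds{E}Y_n\to\mathds{E}\max\{X_0,X_1\}=C_0$.
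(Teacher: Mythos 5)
Your proposal is correct, and its core is a genuinely different argument from the paper's. For the density \eqref{eq:10}, the paper (Proposition~\ref{prop:30}) stays probabilistic throughout: it factorizes $\Pi_n^{<}(\omega_0,\omega_1)=\mathds{P}_n\big[\forall\, p<n:\ I_p^{N}<\omega_0,\ I_p^{N-1}<\omega_1\big]$ into the product $\prod_{p<n}\mathds{P}_n\big[I_p^{N}<\omega_0,\ I_p^{N-1}<\omega_1\big]$ via the Chinese remainder theorem, and then evaluates each local factor's limit $1-p^{-\omega_0}-p^{-\omega_1}$ by conditioning on $N\equiv 1 \bmod p$. You instead run a truncated, Legendre-style sieve: exact residue counting modulo $\prod_{k\le r}p_k^{\max(\omega_0,\omega_1)}$ for the first $r$ primes, followed by a squeeze using the tail bound $n\sum_{p>p_r}\big(p^{-\omega_0}+p^{-\omega_1}\big)$, which vanishes as $r\to\infty$ precisely because $\omega_0,\omega_1\ge 2$. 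Both proofs pivot on the same arithmetic fact --- coprimality of $M$ and $M-1$ makes the two local bad events disjoint, producing the subtraction $1-p^{-\omega_0}-p^{-\omega_1}$ rather than a product of two factors --- but your truncation-and-squeeze makes fully rigorous the step the paper treats informally, namely the interchange of $n\to\infty$ with the product over infinitely many primes (the paper's exact factorization over all $p<n$ at finite $n$ glosses over boundary effects). The differencing step for the probability mass function is identical in both arguments. For the expectation, the paper's proof of Theorem~\ref{thm:00} merely records the double-sum formula \eqref{eq:20} for $\mathds{E}\max\{X_0,X_1\}$; the uniform integrability needed to upgrade convergence in distribution to $\mathds{E}Y_n\to C_0$ is in effect only supplied later, by the truncation argument in the proof of Theorem~\ref{thm:01}. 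Your tail-sum identity plus dominated convergence, with the $n$-uniform dominating sequence $\min\big\{1,\,2\sum_p p^{-m}\big\}$ (summable since $\sum_{m\ge 2}(\zeta(m)-1)=1$), closes this point directly inside the theorem and yields the compact formula $C_0=1+\sum_{m\ge 2}\big(1-\pi(m,m)\big)$ as a bonus. What the paper's local-factor method buys in exchange is a template that carries over to triples (Proposition~\ref{prop:31}, giving \eqref{eq:11}), where the conditioning at $p=2$ needs separate care; your residue count would extend there as well, but your write-up addresses only pairs, which is all Theorem~\ref{thm:00} requires.
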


We point out that the formula in~\eqref{eq:10} has previously been shown by Brandes in~\cite{Brandes2015} using a power sieve method. Prior to~\cite{Brandes2015}, related results to~\eqref{eq:10} have only been obtained for $\pmb{\omega}$ with equal coordinates, i.e. tuples of $k$-free numbers. Carlitz~\cite{Carlitz1932} was the first to show that~\eqref{eq:10} holds for $\omega_0=\omega_1=2$. Through a number of intermediate steps, such as \cite{DietmanMarmon14,Heath1984,mirsky1949}, the most general result for $r$-tuples of $k$-free numbers is due to Reuss~\cite{ReussarXiv2014}. A comprehensive list of results with a similar flavor can be found in~\cite{SurveyTsvetkov20019}. The powerful sieve and determinant methods used in these proofs also give bounds on the error term. 

In contrast, the new proof we present for~\eqref{eq:10} to obtain the leading term is a short and completely elementary probabilistic approach. A further advantage of it is that  the density in~\eqref{eq:10} naturally generalises to triplets $\pmb{\omega}=(\omega_0,\omega_1,\omega_2)$. Namely, we prove in Section~\ref{sec:proofs} using the probabilistic approach that for every $\omega_0,\omega_1,\omega_2\geq 2:$
\begin{equation}\label{eq:11}
	\lim_{n\to\infty}\frac{\big|\big\{M\leq n:\; \|M-j\|_\infty <\omega_j \text{ for } j=0,1,2 \big\}\big|}{n} = \prod_{p:\, \mathrm{prime}}\! \bigg( 1- \sum_{j=0}^2\frac{1}{p^{\omega_j}} \bigg)~.
\end{equation} 
Section~\ref{sec:proofs} contains the proof of Theorem~\ref{thm:00} as well. Moreover, we comment on potentially extending to longer $\pmb{\omega}$ sequences in Remark~\ref{rm:00}. From~\eqref{eq:10} and \eqref{eq:11} the limiting joint distributions of $(\|N_n\|_\infty,\|N_n-1\|_\infty)$ and $(\|N_n\|_\infty,\|N_n-1\|_\infty,\|N_n-2\|_\infty)$ can be readily obtained by a standard differencing argument.  

Returning now to our original motivation of determining the asymptotic growth rate of $L_\infty(N)$, from Theorem~\ref{thm:00} it is natural to guess that this should be $cN+o(N)$ for some $0<c<\infty$. Moreover, $L_\infty(N)/N$ is an average of the consecutive values of $\max\{\|M\|_\infty,\|M-1\|_\infty\}$. Thus, with the probabilistic interpretation of $C_0$, it is reasonable to expect that in fact $c=C_0$. This is precisely our main result.

\begin{theorem}\label{thm:01}
	The limit
	\begin{equation}\label{eq:ConjectureLinfty/N}
		\lim_{N\to\infty}\frac{1}{N}L_\infty(N) \;\text{ exists and is equal to }\; C_0. 
	\end{equation}
\end{theorem}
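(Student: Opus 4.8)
The plan is to read off $L_\infty(N)/N$ as a Cesàro average and then to promote the distributional convergence supplied by Theorem~\ref{thm:00} to convergence of the corresponding means.

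First I would make the link between the deterministic sum and the random variable explicit. Put $g(M):=\max\{\|M\|_\infty,\|M-1\|_\infty\}$, so that $L_\infty(N)=\sum_{M=2}^N g(M)$ by~\eqref{def:Linfty}. Sampling $N_N$ uniformly from $\{2,\dots,N\}$ gives $\mathds{E} Y_N=\frac{1}{N-1}\sum_{M=2}^N g(M)=\frac{1}{N-1}L_\infty(N)$; since the uniform laws on $\{1,\dots,N\}$ and on $\{2,\dots,N\}$ differ in total variation by $O(1/N)$ they share the weak limit $(X_0,X_1)$, so this choice does not affect Theorem~\ref{thm:00} and it conveniently avoids the undefined term at $M=1$. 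Therefore
\begin{equation*}
\frac{1}{N}L_\infty(N)=\frac{N-1}{N}\,\mathds{E} Y_N,
\end{equation*}
and it remains only to prove that $\mathds{E} Y_N\to C_0$.

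The distributional statement $(\|N_N\|_\infty,\|N_N-1\|_\infty)\stackrel{d}{\to}(X_0,X_1)$ of Theorem~\ref{thm:00} yields, via the continuous mapping theorem applied to $\max$, that $Y_N\stackrel{d}{\to}\max\{X_0,X_1\}$. Because $\max$ is unbounded, weak convergence by itself does not force $\mathds{E} Y_N\to\mathds{E}\max\{X_0,X_1\}$; the missing ingredient, and the step I expect to be the main obstacle, is uniform integrability of the family $\{Y_N\}_N$, that is, ruling out that a vanishing proportion of highly powerful integers carries a non-negligible share of the average.

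To secure uniform integrability I would estimate the tails uniformly in $N$. If $\|M\|_\infty\ge j$ then $p^{j}\mid M$ for some prime $p$, so a union bound gives $|\{M\le N:\|M\|_\infty\ge j\}|\le\sum_{p:\,\mathrm{prime}}\lfloor N/p^{j}\rfloor\le N\sum_{p:\,\mathrm{prime}}p^{-j}$, and hence for every $N$
\begin{equation*}
\mathds{P}(Y_N\ge j)\le 2\sum_{p:\,\mathrm{prime}}p^{-j},
\end{equation*}
a bound that decays geometrically in $j$. Summation by parts gives $\mathds{E}[Y_N\mathbf{1}_{\{Y_N>K\}}]=K\,\mathds{P}(Y_N>K)+\sum_{j>K}\mathds{P}(Y_N\ge j)$, which the previous display bounds by a quantity that is independent of $N$ and tends to $0$ as $K\to\infty$; the same estimate in its limiting form shows $\mathds{E}\max\{X_0,X_1\}<\infty$, consistent with the finite value of $C_0$. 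With uniform integrability in hand, weak convergence upgrades to $\mathds{E} Y_N\to\mathds{E}\max\{X_0,X_1\}=C_0$, and combining this with the displayed identity for $L_\infty(N)/N$ completes the proof.
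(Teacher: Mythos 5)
Your proof is correct and is essentially the paper's argument in probabilistic packaging: the paper truncates the norm at level $k$, shows the truncated averages $L_k(N)/N$ converge to constants $C_k\uparrow C_0$ via Proposition~\ref{prop:30}, and bounds the truncation error uniformly in $N$ by $2\sum_{\ell>k}\sum_p \ell p^{-\ell}$, which is exactly your uniform-integrability tail estimate $\mathds{P}(Y_N\ge j)\le 2\sum_p p^{-j}$ after summation by parts. The only differences are cosmetic: you invoke the weak-convergence-plus-uniform-integrability theorem where the paper runs the double limit ($N\to\infty$ then $k\to\infty$) by hand, and your shift to sampling from $\{2,\dots,N\}$ tidies the $M=1$ term that the paper leaves implicit.
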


\begin{problem}\label{prob:MagnitudeofError}
	What is the order of magnitude of $|L_{\infty}(N)-C_0N|$?
\end{problem}

From the raw data, we estimated the order of magnitude of the error $|L_{\infty}(N)-C_0N|$ to be $O(N^{0.28})$, considerably smaller than the leading term. Notice that the function $\max\{\|N\|_{\infty},\|N-1\|_{\infty}\}$ of $N$ (and thus $L_{\infty}(N)$) is neither additive nor multiplicative. Hence, well-developed methods from probabilistic number theory are not available to obtain bounds on the error term. Current bounds from the sieve methods also can not account for this drop, indicating that this could be a challenging problem. Figure~\ref{fig:ratio} below shows a list plot of the ratios $L_{\infty}(p_k)/p_k$ for every $10^5$-th prime up to $10^{12}$ plotted against estimates on its fluctuation around $C_0$, see Subsection~\ref{subsec:DirectCompOfLinfty} for details. 
\begin{figure}[H]
	\centering
	\includegraphics[width=1.0\textwidth]{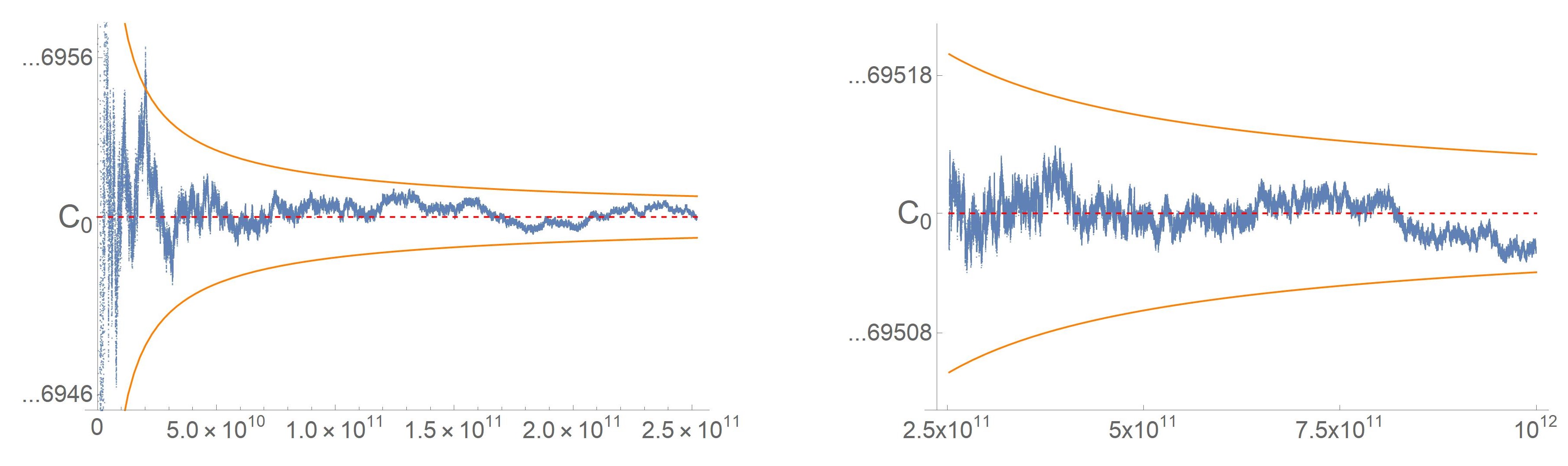}
	\caption{Ratio $L_\infty(p_k)/p_k$ for every $10^5$-th prime up to $10^{12}$. For reference, $C_0$ and $C_0\pm N^{-0.72}$ are plotted as well. Each value on $y$-axis begins with $2.2883\ldots$. }
	\label{fig:ratio}
\end{figure} 

%%%%%%%%%%%%%%%%%%%%%%%%%%%%%%%%%%%%%%%%%%%%%%%%%%%%%%%%%%%%%%%%%%%%%%%%%%%
\subsection{Distribution of primes on the number trail}\label{subsec:DistrPrimes}

After establishing the asymptotic growth rate of $L_\infty(N)$, we turn to the effect that the new metric has on the distribution of primes. Let us modify the prime counting function to count the number of primes up to $N$ on the number trail,
$$\pi_\infty(N):= \max \{k:\; L_\infty(p_k)\leq N\}.$$
Of course, $\pi_\infty(L_\infty(p_k))=k$. Combining Theorem~\ref{thm:01} with the prime number theorem, we obtain a modified version for $\pi_\infty(N)$. 
\begin{theorem}[Modified Prime Number Theorem]\label{thm:ModPNT}
	Let $\mathrm{Li}(x)$ denote the offset logarithmic integral function $\int_{2}^{x}1/\ln(y) \mathrm dy$. Then
	\begin{equation*}
	\lim_{N\to\infty}\frac{\pi_\infty(N)}{N/\log N} = \lim_{N\to\infty}\frac{\pi_\infty(N)}{\mathrm{Li}(N)} = \frac{1}{C_0} = 0.43699236\ldots.
	\end{equation*}
	%	where $c_0$ is the constant defined by the limit of $L_\infty(N)/N$ in \eqref{eq:ConjectureLinfty/N}.
\end{theorem}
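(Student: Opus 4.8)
The plan is to derive the Modified Prime Number Theorem directly from Theorem~\ref{thm:01} and the classical prime number theorem by a change of variables along the sequence of primes. The key observation is that $\pi_\infty(N)$ is essentially the inverse function of $k\mapsto L_\infty(p_k)$, so the two limits we want to compute are governed by how $L_\infty(p_k)$ grows relative to $k$. First I would fix a value of $N$ and set $k=\pi_\infty(N)$, so that by definition $L_\infty(p_k)\leq N < L_\infty(p_{k+1})$. The goal is then to sandwich $N$ between quantities that are asymptotically equivalent to $C_0 \, p_k$, and from there pass to $N/\log N$.

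The central step is to control $L_\infty(p_k)$ using Theorem~\ref{thm:01}. That theorem gives $L_\infty(M)/M \to C_0$ as $M\to\infty$; applying this along the subsequence $M=p_k$ yields $L_\infty(p_k) = C_0\, p_k\,(1+o(1))$ as $k\to\infty$. The same holds with $p_{k+1}$ in place of $p_k$, and since consecutive primes satisfy $p_{k+1}/p_k\to 1$ (a consequence of the prime number theorem), the two bounding values $L_\infty(p_k)$ and $L_\infty(p_{k+1})$ are asymptotically equal. Combining the sandwich $L_\infty(p_k)\leq N< L_\infty(p_{k+1})$ with these asymptotics gives $N = C_0\, p_k\,(1+o(1))$, equivalently $p_{\pi_\infty(N)} = p_k = N/C_0\,(1+o(1))$.

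It then remains to translate this into a statement about $\pi_\infty(N)=k$ itself. By the prime number theorem in the form $p_k \sim k\log k$, the relation $p_k \sim N/C_0$ determines the growth of $k$: taking logarithms gives $\log p_k \sim \log N$, and hence
\begin{equation*}
\pi_\infty(N)=k \sim \frac{p_k}{\log p_k} \sim \frac{N/C_0}{\log N} = \frac{1}{C_0}\cdot\frac{N}{\log N}.
\end{equation*}
This establishes the first equality in the theorem. For the second, I would use the standard fact that $\mathrm{Li}(N)\sim N/\log N$, so that $\pi_\infty(N)/\mathrm{Li}(N)$ has the same limit $1/C_0$; equivalently one may note that $\pi(x)=\pi_\infty(L_\infty^{-1}\text{-image})$ so that $\pi_\infty(N)\sim \pi(N/C_0)$ and invoke the $\mathrm{Li}$ form of the prime number theorem, $\pi(x)\sim\mathrm{Li}(x)$, together with $\mathrm{Li}(N/C_0)\sim\mathrm{Li}(N)$.

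The main obstacle is handling the subsequence and the strict-versus-weak inequalities carefully, since Theorem~\ref{thm:01} only asserts convergence of $L_\infty(M)/M$ along all integers $M$, and I invoke it along the thinned sequence of primes. The delicate point is to verify that the $o(1)$ error does not accumulate through the composition of the asymptotic $L_\infty(p_k)\sim C_0 p_k$ with the inversion step; this is why controlling the ratio $p_{k+1}/p_k\to 1$ is essential, as it guarantees the sandwich collapses and the inverse relation is well-defined asymptotically. Once the sandwich $L_\infty(p_k)\leq N<L_\infty(p_{k+1})$ is combined with $p_{k+1}\sim p_k$, everything else reduces to elementary manipulations with the prime number theorem, so I expect no further difficulty beyond bookkeeping of the error terms.
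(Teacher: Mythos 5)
Your main argument is correct and is essentially the paper's own proof: both invert the relation $L_\infty(p_k)=C_0\,p_k(1+o(1))$ (from Theorem~\ref{thm:01}) to conclude $p_{\pi_\infty(N)}\sim N/C_0$, and then apply the prime number theorem; your sandwich $L_\infty(p_k)\leq N<L_\infty(p_{k+1})$ combined with $p_{k+1}/p_k\to 1$ is just a more careful writeup of the paper's manipulation $\pi_\infty(N)=\max\{k:\,p_k\leq N/C_0+o(p_k)\}$. One caution: the parenthetical alternative you offer for the second equality is wrong, because $\mathrm{Li}(N/C_0)\not\sim\mathrm{Li}(N)$; since $\mathrm{Li}(x)\sim x/\log x$ scales linearly in its argument up to logarithmic corrections, one actually has $\mathrm{Li}(N/C_0)\sim\mathrm{Li}(N)/C_0$, so that chain would yield the false conclusion $\pi_\infty(N)\sim\mathrm{Li}(N)$. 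Your primary route for the second equality --- $\mathrm{Li}(N)\sim N/\log N$, hence $\pi_\infty(N)/\mathrm{Li}(N)$ and $\pi_\infty(N)/(N/\log N)$ share the limit $1/C_0$ --- is the correct one and matches what the paper implicitly does.
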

\begin{proof}
Using Theorem~\ref{thm:01} we can write
\begin{equation*}
\pi_\infty(N)= \max \{k:\; C_0\cdot p_k+o(p_k) \leq N\} = \max \{k:\; p_k \leq N/C_0 +o(p_k)\}.
\end{equation*}
The prime number theorem further implies that
\begin{equation*}
\pi_\infty(N)= \frac{(1+o(1))N/C_0}{\big(1-\frac{\log C_0}{\log N}\big)\log N +o(1)}\,.
\end{equation*}
Dividing by $N/\log N$ and taking limit $N\to\infty$ concludes the proof.
\end{proof}

As in the case on the traditional number line the rate of convergence is very slow and $\mathrm{Li}(N)$ gives a better approximation to $C_0\pi_\infty(N)$ than $N/\log N$. In Table \ref{table:pi_infty(N)} we present the ratios for a few increasing values of $p_k$.
\begin{table}[H]
	\caption{Comparing $\pi_\infty(N)$ to $N/\log N$ and $\mathrm{Li}(N)$, for $N=L_\infty(p_k)$.} \label{table:pi_infty(N)}
	\begin{center}
		\begin{tabular}{c|rcr}
			\toprule
			$k$ & $p_k$ & $\pi_\infty(N)\cdot \log(N) / N$ & $\pi_\infty(N)/\mathrm{Li}(N)$ \\ \midrule
			\rowcolor[HTML]{E8E8E8}
			$10^6$ & $15\,485\,863$ & $0.49053507\ldots$ & $0.46030511\ldots$ \\
			$10^7$ & $179\,424\,673$ & $0.48303924\ldots$ & $0.45721224\ldots$ \\
			\rowcolor[HTML]{E8E8E8}
			$10^8$ & $2\,038\,074\,743$  & $0.47735295\ldots$ & $0.45478434\ldots$ \\
			$10^9$ & $22\,801\,763\,489$ & $0.47294906\ldots$ & $0.45289153\ldots$  \\
			\rowcolor[HTML]{E8E8E8}
			$10^{10}$ &  $252\,097\,800\,623$ & $0.46942719\ldots$ & $0.45136754\ldots$  \\
			$2\cdot10^{10}$ & $518\,649\,879\,439$ & $0.46850132\ldots$ & $0.45096581\ldots$\\
			\rowcolor[HTML]{E8E8E8}
			$3\cdot10^{10}$ &  $790\,645\,490\,053$ & $0.46798418\ldots$ & $0.45074112\ldots$  \\
			\bottomrule
		\end{tabular}
	\end{center}
\end{table}

Our other interest lies in comparing the prime gap functions
\begin{equation*}
\mathcal D_k^1:= L_\infty(p_{k+1})-L_\infty(p_k)  \;\text{ and }\; \mathcal D_k^2:= \mathcal D_{k+1}^1 -\mathcal D_k^1
\end{equation*}
on the number trail to their counterparts $D_k^1=p_{k+1}-p_k$ and $D_k^2=D_{k+1}^1-D_k^1$ on the traditional number line. In Section~\ref{sec:D1andD2}, we highlight some stark differences between them, such as the fact that $\mathcal D_k^1$ and $\mathcal D_k^2$ can take odd values as well.

%%%%%%%%%%%%%%%%%%%%%%%%%%%%%%%%%%%%%%%%%%%%%%%%%%%%%%%%%%%%%%%%%%%%%%%%%%%
\subsection{Other metrics}\label{subsec:OtherMetrics}
In our opinion the $\ell_\infty$ metric is the most natural to use, however other metrics could be used as well. Another natural choice could be the $\ell_1$ norm defined by $\|N\|_1=\|\ii^N\|_1=\sum_k i_k,$ which counts the total number of prime factors (with multiplicities) of $N$. This is the well-known additive arithmetic function $\Omega(N)$ in number theory. In particular, the numbers at unit distance from the origin in the $\ell_1$ metric are the prime numbers. The balls with radius $2$ or more don't have such a nice interpretation or asymptotic density as in the $\ell_\infty$ case, though. 

The length of the number trail in this case is
\begin{equation} \label{def:L1}
L_1 (N) = \sum_{M=2}^{N}d_1(M,M-1) = \sum_{M=2}^{N} \|M\|_1 + \|M-1\|_1 = \|N\|_1+2\sum_{M=2}^{N-1}\|M\|_1,
\end{equation}
where the second equality again holds, since $M$ and $M-1$ are always coprime. Thus, the asymptotic growth of $L_1(N)$ is determined by the growth of $\sum_{M\leq N} \Omega(M)$. 
\begin{claim}\label{prop:BoundsOnL1}
For every $N\geq 2$ 
\begin{equation*}\label{eq:BoundL1}
L_1(N)  = 2 N \log\log\! N +2(A+B) N + o(N),
\end{equation*}
where $A$ denotes the Meissel--Mertens constant and $B=\sum_k\big(p_k(p_k-1)\big)^{-1}$. 
\end{claim}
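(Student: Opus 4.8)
The plan is to reduce the claim to the classical asymptotics for the summatory function of the additive arithmetic function $\Omega$ and then to pin down the constant explicitly. First I would rewrite~\eqref{def:L1} using $\|M\|_1 = \Omega(M)$, the fact that $\Omega(1)=0$, and the trivial bound $\Omega(N)\le \log_2 N = O(\log N)$, obtaining
\begin{equation*}
L_1(N) = \Omega(N) + 2\sum_{M=1}^{N-1}\Omega(M) = 2\sum_{M\le N}\Omega(M) - \Omega(N) = 2\sum_{M\le N}\Omega(M) + O(\log N).
\end{equation*}
Thus everything comes down to proving $\sum_{M\le N}\Omega(M) = N\log\log N + (A+B)N + o(N)$; the prefactor $2$ then yields both the leading term $2N\log\log N$ and the constant $2(A+B)$.

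To evaluate this sum I would split $\Omega = \omega + (\Omega-\omega)$, where $\omega(M)$ counts the \emph{distinct} prime divisors of $M$. For the first part, interchanging the order of summation gives
\begin{equation*}
\sum_{M\le N}\omega(M) = \sum_{p\le N}\left\lfloor \frac{N}{p}\right\rfloor = N\sum_{p\le N}\frac{1}{p} + O(\pi(N)).
\end{equation*}
Mertens' second theorem, $\sum_{p\le N}1/p = \log\log N + A + o(1)$ with $A$ the Meissel--Mertens constant, together with $\pi(N)=O(N/\log N)=o(N)$, shows that this part contributes exactly $N\log\log N + AN + o(N)$.

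For the second part I would use the identity $(\Omega-\omega)(M)=\sum_{p}\sum_{j\ge 2}\mathbf{1}\{p^j\mid M\}$, which counts precisely the prime-power factors beyond the first. Swapping summation order again yields
\begin{equation*}
\sum_{M\le N}(\Omega-\omega)(M) = \sum_{p}\sum_{j\ge 2}\left\lfloor \frac{N}{p^j}\right\rfloor = N\sum_{p}\sum_{j\ge 2}\frac{1}{p^j} + R(N),
\end{equation*}
and since $\sum_{j\ge 2}p^{-j} = \big(p(p-1)\big)^{-1}$ the main term equals $BN$. Adding the two parts then exhibits the constant as $A+B$.

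The only point requiring genuine care --- which I regard as the main technical step rather than a true obstacle --- is verifying that $R(N)=o(N)$. The error splits into the accumulated fractional parts of the floors and the tail of the truncated double series. The floors contribute at most one per admissible pair $(p,j)$ with $j\ge 2$ and $p^j\le N$, and the number of such pairs is governed by the $j=2$ term, hence $O(\sqrt N/\log N)$; the truncation tail $N\sum_{j\ge 2}\sum_{p^j>N}p^{-j}$ is likewise dominated by $j=2$ and is $O(\sqrt N)$. Both are $O(\sqrt N)=o(N)$, so $R(N)$ is negligible. I would remark that the leading asymptotic for $\sum_{M\le N}\Omega(M)$ is itself classical; the substance of the claim is that the decomposition above displays the constant transparently as $A+B$, with $A$ arising from Mertens' theorem and $B$ from the contribution of the higher prime powers.
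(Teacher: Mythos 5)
Your proof is correct and follows essentially the same route as the paper's: both reduce $L_1(N)$ to $2\sum_{M\le N}\Omega(M)+o(N)$, interchange the order of summation to get $\sum_{p^\alpha\le N}\lfloor N/p^\alpha\rfloor$, and split off the $\alpha=1$ terms (Mertens' theorem, giving $A$) from the $\alpha\ge 2$ terms (the geometric series $\sum_{j\ge2}p^{-j}=\big(p(p-1)\big)^{-1}$, giving $B$); your decomposition $\Omega=\omega+(\Omega-\omega)$ is exactly this same split phrased at the level of arithmetic functions. If anything, your write-up is slightly more careful than the paper's, both in the pointwise bound $\Omega(N)=O(\log N)$ (the paper writes $O(\log\log N)$, which is the normal order rather than a valid pointwise bound, though either is $o(N)$) and in the explicit $O(\sqrt N)$ control of the error term $R(N)$.
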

The ingredients for the proof can be found in for example~\cite{TuranMatFizLap34}. We give the short argument here for completeness.
\begin{proof}
	Since $\|N\|_1=O(\log\log\!N)$, it follows from \eqref{def:L1} that $L_1(N)=2\sum_{M\leq N} \Omega(M)+o(N)$. Let $p$ denote a prime and $\alpha$ a positive integer. Then
	\begin{equation}\label{eq:12}
	\sum_{p^\alpha\leq N} \frac{1}{p^\alpha} = \sum_{p\leq N} \frac{1}{p} + \sum_{\substack{p^\alpha\leq N\\ \alpha\geq 2}} \frac{1}{p^\alpha} = \sum_{p\leq N} \frac{1}{p}+ \sum_{p\leq N}\frac{1}{p^2}\frac{1-o(1)}{1-1/p} = \log\log\!N +A+B+o(1).
	\end{equation}
	Furthermore,
	\begin{equation*}
	\sum_{M\leq N} \Omega(M) = \sum_{p^\alpha\leq N} \left[\frac{N}{p^\alpha}\right] = \sum_{p^\alpha\leq N} \frac{N}{p^\alpha} + o(N).
	\end{equation*}
	Combining this with \eqref{eq:12} completes the proof.
\end{proof}
%%%%%%%%%%%%%%%%%%%%%%%%%%%%%%%%%%%%%%%%%%%%%%%%%%%%%%%%%%%%%%%%%%%%%%%%%%%
\subsection*{Structure of the paper}
In the remainder of the paper, we only work with the $\ell_\infty$ norm. Section~\ref{sec:proofs} contains all our results on limiting densities in $\pmb{\Omega},$ including the proof of Theorem~\ref{thm:00}.
We prove Theorem~\ref{thm:01} in Section~\ref{sec:Linfty}. Our results about the prime gap functions $\mathcal D_k^1$ and $\mathcal D_k^2$ are presented in Section~\ref{sec:D1andD2}.

An Appendix is included at the end with supplementary material. It includes raw numerical data of the histograms of $\mathcal D^1$ and $\mathcal D^2$ together with a detailed explanation of the Sagemath Python code used to generate the entire data set.

%%%%%%%%%%%%%%%%%%%%%%%%%%%%%%%%%%%%%%%%%%%%%%%%%%%%%%%%%%%%%%%%%%%%%%
%%%%%%%%%%%%%%%%%%%%%%%%%%%%%%%%%%%%%%%%%%%%%%%%%%%%%%%%%%%%%%%%%%%%%%
\section{Limiting densities in \texorpdfstring{$\pmb{\Omega}$}{Omega}, proof of Theorem~\ref{thm:00}}\label{sec:proofs}

In this section $p$ is always a prime and let $N=N_n$ denote a random integer chosen uniformly at random from the set $\{1,2,\ldots,n\}$. The signature $\mathbf{I}=\mathbf{I}^N=(I_2,I_3,I_5,\ldots)$ of $N$ is a random infinite dimensional vector, for which
\begin{equation*}
N=\prod_p p^{I_p}.
\end{equation*}
When it is important to indicate that $I_p$ is in the signature of $N$, we write $I_p^N$. It is well-known~\cite[Chapter 1.2]{ArratiaBook} that as $n\to\infty$, the signature $\mathbf{I}$ tends in distribution
\begin{equation*}
\mathbf{I}=(I_2,I_3,I_5,\ldots) \stackrel{d}{\to} \mathbf{Z}=(Z_2,Z_3,Z_5,\ldots),
\end{equation*}
where the $Z_p$ are independent geometric random variables with distribution
\begin{equation*}
\mathds{P}[Z_p=k] = \left(1-\frac{1}{p}\right)\left(\frac{1}{p}\right)^k \quad \text{for } k=0,1,2,\ldots .
\end{equation*}
Moreover, Kubilius~\cite{KubiliusBook} was the first to show that the total variation distance
\begin{equation*}
d_{\mathrm{TV}}\big(\mathcal{L}\big((I_p:\, p\leq b\big), \mathcal{L}\big((Z_p:\, p\leq b)\big)\big)\to 0 \quad\text{if }\; \frac{\log b}{\log n} \to 0.
\end{equation*}
Hence, one can immediately deduce the asymptotic in \eqref{eq:vsz_Ninfty=k}:
\begin{align*}
\lim_{n\to\infty} &\frac{|\{M\leq n \text{ such that } \|M\|_\infty =k \}|}{n} = \lim_{n\to\infty}\mathds{P}[\|N_n\|_{\infty} = k]  \\
& = \lim_{n\to\infty}\mathds{P}[\|N_n\|_{\infty} < k+1] - \mathds{P}[\|N_n\|_{\infty} < k] \\
&= \lim_{n\to\infty} \mathds{P}[\forall\, p<n:\, I_p < k+1] - \mathds{P}[\forall\, p<n:\, I_p < k] \\
&= \mathds{P}[\forall\, p:\, Z_p<k+1] - \mathds{P}[\forall\, p:\, Z_p<k] = \prod_p \mathds{P}[Z_p<k+1] -\prod_p \mathds{P}[Z_p<k]\\
& = \prod_p\bigg( 1- \frac{1}{p^{k+1}}\bigg) - \prod_p\bigg( 1- \frac{1}{p^{k}}\bigg) = \frac{1}{\zeta(k+1)} - \frac{1}{\zeta(k)}\, .
\end{align*}
The main contribution of this section is to determine the asymptotics of the joint distributions
\begin{align*}
& \lim_{n\to\infty} \mathds{P}_n[\|N\|_{\infty} = \omega_0, \|N-1\|_{\infty} = \omega_1] \;\text{ and }\;  \\
& \lim_{n\to\infty}\mathds{P}_n[\|N\|_{\infty} = \omega_0, \|N-1\|_{\infty} = \omega_1, \|N-2\|_{\infty} = \omega_2] \text{ for } \omega_0,\omega_1, \omega_2\geq 1,
\end{align*}
where $\mathds{P}_n$ indicates that $N$ is chosen uniformly at random from $\{1,2,\ldots,n\}$.

We begin with the joint distribution of $(\|N_n\|_\infty,\|N_n-1\|_\infty)$. Let us denote
\begin{align*}
\Pi_n(\omega_0,\omega_1) &:= \mathds{P}_n[\|N\|_{\infty} = \omega_0, \|N-1\|_{\infty} = \omega_1], \\
\Pi_n^{<}(\omega_0,\omega_1) &:= \mathds{P}_n[\|N\|_{\infty} < \omega_0, \|N-1\|_{\infty} < \omega_1],
\end{align*}
and
\begin{equation*}
\pi(\omega_0,\omega_1):= \prod_{p}\! \bigg( 1- \frac{1}{p^{\omega_0}} - \frac{1}{p^{\omega_1}}\bigg).
\end{equation*}

\begin{prop}\label{prop:30}
	The following asymptotics hold for $\omega_0,\omega_1\geq 2$:
	\begin{align*}
	\lim_{n\to\infty} \Pi_n(1, 1) &= \pi(2,2), \quad  \lim_{n\to\infty} \Pi_n(1, \omega_1) = \pi(2,\omega_1+1)-\pi(2,\omega_1), \\
	\lim_{n\to\infty} \Pi_n(\omega_0, 1) &= \pi(\omega_0+1,2)-\pi(\omega_0,2) , \\
	\lim_{n\to\infty} \Pi_n(\omega_0, \omega_1) &= \pi(\omega_0+1,\omega_1+1)-\pi(\omega_0+1,\omega_1)-\pi(\omega_0,\omega_1+1)+\pi(\omega_0,\omega_1). 
	\end{align*}
\end{prop}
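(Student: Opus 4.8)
The plan is to deduce all four cases of Proposition~\ref{prop:30} from the cumulative density~\eqref{eq:10} of Theorem~\ref{thm:00} by a finite inclusion--exclusion (a differencing argument) carried out in each coordinate separately. Throughout I write $A_k := \{\|N\|_\infty < k\}$ and $B_\ell := \{\|N-1\|_\infty < \ell\}$, so that by definition $\Pi_n^<(k,\ell) = \mathds{P}_n[A_k \cap B_\ell]$ and, by~\eqref{eq:10}, $\lim_{n\to\infty}\Pi_n^<(k,\ell) = \pi(k,\ell)$ whenever $k,\ell \geq 2$. Since $A_k \subseteq A_{k+1}$ and $B_\ell \subseteq B_{\ell+1}$, the exact-norm events split as $\{\|N\|_\infty = k\} = A_{k+1}\setminus A_k$ and $\{\|N-1\|_\infty = \ell\} = B_{\ell+1}\setminus B_\ell$, which is the only structural input the argument needs.

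For the generic case $k,\ell \geq 2$ I would multiply the indicator identities $\mathbf 1_{A_{k+1}\setminus A_k} = \mathbf 1_{A_{k+1}} - \mathbf 1_{A_k}$ and $\mathbf 1_{B_{\ell+1}\setminus B_\ell} = \mathbf 1_{B_{\ell+1}} - \mathbf 1_{B_\ell}$ and take expectations. Expanding the product gives
\[
\Pi_n(k,\ell) = \Pi_n^<(k+1,\ell+1) - \Pi_n^<(k+1,\ell) - \Pi_n^<(k,\ell+1) + \Pi_n^<(k,\ell),
\]
and since all four arguments are then $\geq 2$, letting $n\to\infty$ and invoking~\eqref{eq:10} reproduces the fourth line of the proposition verbatim.

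The boundary cases $k=1$ or $\ell=1$ are where the only genuine care is needed, and I expect this to be the main (if modest) obstacle: the product formula $\pi(\omega_0,\omega_1)$ is not available at argument $1$---its factor at $p=2$ would vanish---so one cannot simply substitute $k=1$ or $\ell=1$ into~\eqref{eq:10}. The remedy is the observation that $A_1 = \{\|N\|_\infty < 1\} = \{N=1\}$ and $B_1 = \{\|N-1\|_\infty < 1\} = \{N=2\}$ are single points, whence $\mathds{P}_n[A_1],\,\mathds{P}_n[B_1] \leq 1/n \to 0$. Running the same differencing for $k=1,\ \ell\geq 2$ gives $\Pi_n(1,\ell) = \Pi_n^<(2,\ell+1) - \Pi_n^<(2,\ell) - \mathds{E}_n\big[\mathbf 1_{A_1}(\mathbf 1_{B_{\ell+1}} - \mathbf 1_{B_\ell})\big]$, where the trailing expectation is at most $\mathds{P}_n[A_1] = 1/n$; passing to the limit yields the second line, and the case $k\geq 2,\ \ell=1$ follows identically with $N$ and $N-1$ interchanged. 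For $k=\ell=1$ the expansion of $\mathds{E}_n\big[(\mathbf 1_{A_2}-\mathbf 1_{A_1})(\mathbf 1_{B_2}-\mathbf 1_{B_1})\big]$ equals $\Pi_n^<(2,2)$ minus three terms each dominated by $\mathds{P}_n[A_1]$ or $\mathds{P}_n[B_1]$, so the limit is $\pi(2,2)$, the first line. Granting~\eqref{eq:10}, the proposition is thus a purely bookkeeping consequence, with all the analytic substance residing in~\eqref{eq:10} itself.
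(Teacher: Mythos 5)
Your differencing argument is fine as far as it goes---it reproduces the first, purely formal half of the paper's proof, and your handling of the boundary cases (observing that $A_1=\{N=1\}$ and $B_1=\{N=2\}$ are single points of probability at most $1/n$) is actually more scrupulous than the paper's one-line ``Observe that $\Pi_n(1,1)=\Pi_n^{<}(2,2)$'', which is only true up to an $O(1/n)$ correction. The genuine gap is that your sole analytic input, the limit $\lim_{n\to\infty}\Pi_n^{<}(\omega_0,\omega_1)=\pi(\omega_0,\omega_1)$ of~\eqref{eq:10}, is assumed rather than proved, and within this paper that assumption is circular. The logical order here is the reverse of what you posit: \eqref{eq:10} is \emph{established inside the proof of Proposition~\ref{prop:30}}, and the proof of Theorem~\ref{thm:00} then refers back to it (``In the proof of Proposition~\ref{prop:30}, we already established the relative density in~\eqref{eq:10}\dots''). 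The proposition is the vehicle by which the paper proves~\eqref{eq:10}; it is not a corollary of it. Quoting Theorem~\ref{thm:00} to prove Proposition~\ref{prop:30} therefore proves nothing, unless you instead import the density~\eqref{eq:10} from the prior literature (Brandes's power-sieve result), which would defeat the paper's stated purpose of giving a new elementary proof.

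What your proposal omits is precisely the technical core that a proof of this proposition must supply: the convergence of $\Pi_n^{<}$. The paper obtains it in two steps. First, divisibility conditions at distinct primes are asymptotically independent (Chinese remainder theorem), giving the factorization $\Pi_n^{<}(\omega_0,\omega_1)=\prod_{p<n}\mathds{P}_n\big[I_p^{N}<\omega_0,\,I_p^{N-1}<\omega_1\big]$. Second, each local factor is computed by conditioning on the residue of $N$ modulo $p$: since consecutive integers are coprime, $I_p^{N-1}\geq\omega_1$ together with $I_p^{N}<\omega_0$ forces $I_p^{N}=0$ and $N\equiv1\pmod p$, and a short calculation then gives $\mathds{P}_n\big[I_p^{N}<\omega_0,\,I_p^{N-1}<\omega_1\big]\to 1-p^{-\omega_0}-p^{-\omega_1}$. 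Only after these two steps does the differencing you carried out finish the argument. As you yourself concede, ``all the analytic substance resides in~\eqref{eq:10}''---and that substance is exactly what is missing from your proof.
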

\begin{proof}
	Observe that $\Pi_n(1, 1)=\Pi_n^{<}(2,2)$. Moreover, the standard differencing technique implies that for $\omega_0,\omega_1\geq 2$:
	\begin{align*}
	\Pi_n(1, \omega_1) &= \Pi_n^{<}(2,\omega_1+1) - \Pi_n^{<}(2,\omega_1),\;\; \Pi_n(\omega_0, 1) = \Pi_n^{<}(\omega_0+1,2) - \Pi_n^{<}(\omega_0,2), \\
	\Pi_n(\omega_0, \omega_1) &= \Pi_n^{<}(\omega_0+1,\omega_1+1) -\Pi_n^{<}(\omega_0+1,\omega_1) - \Pi_n^{<}(\omega_0,\omega_1+1) + \Pi_n^{<}(\omega_0,\omega_1).
	\end{align*}
	Thus, it is enough to look at the probability $\Pi_n^{<}(\omega_0,\omega_1)$ with $\omega_0,\omega_1\geq 2$:
	\begin{equation*}
	\Pi_n^{<}(\omega_0,\omega_1) = \mathds{P}_n\big[ \forall\, p<n:\, I_p^{N}<\omega_0,\, I_p^{N-1}<\omega_1\big] = \prod_{p<n} \mathds{P}_n\big[ I_p^{N}<\omega_0,\, I_p^{N-1}<\omega_1\big].
	\end{equation*}
	The last equality holds because `divisibility by primes are independent events'. More formally, the Chinese remainder theorem implies that for distinct primes $p$ and $q$ the map
	\begin{equation*}
	\begin{cases}
	\mathbb{Z}/pq\mathbb{Z} \longrightarrow \mathbb{Z}/p\mathbb{Z} \times \mathbb{Z}/q\mathbb{Z} \\
	x \mapsto ( x \!\mod p, x \!\mod q)
	\end{cases}
	\end{equation*}
	is a bijection, see discussion around~\cite[Proposition 1.3.7.]{KowalskiNotes}. Hence, it remains only to show that $\mathds{P}_n\big[ I_p^{N}<\omega_0,\, I_p^{N-1}<\omega_1\big]\to 1-1/p^{\omega_0}-1/p^{\omega_1}$ as $n\to\infty$. Indeed,
	\begin{align*}
	\mathds{P}_n\big[ I_p^{N}&<\omega_0,\, I_p^{N-1}<\omega_1\big] = \mathds{P}_n\big[ I_p^{N}<\omega_0\big] - \mathds{P}_n\big[ I_p^{N}<\omega_0,\, I_p^{N-1}\geq \omega_1\big] \\
	&= \mathds{P}_n\big[ I_p^{N}<\omega_0\big] - \mathds{P}_n\big[ I_p^{N} = 0,\, I_p^{N-1}\geq \omega_1\big] \\
	&= \mathds{P}_n\big[ I_p^{N}<\omega_0\big] - \sum_{\ell=0}^{p-1}\underbrace{\mathds{P}_n\big[ I_p^{N} = 0,\, I_p^{N-1}\geq \omega_1,\, N\equiv \ell\!\! \mod p\big]}_{\neq 0 \;\Longleftrightarrow\; \ell=1} \\
	&= \mathds{P}_n\big[ I_p^{N}<\omega_0\big] - \mathds{P}_n\big[ I_p^{N-1}\geq \omega_1 \big|\, I_p^{N} = 0,\, N\equiv 1\!\! \mod p\big]\cdot \mathds{P}_n\big[ N\equiv 1\!\! \mod p\big] \\
	&= 1-\frac{1}{p^{\omega_0}}+o(1) - \frac{1/p^{\omega_1}+o(1)}{1-(1-1/p)+o(1)} \cdot  \frac{\lfloor n/p \rfloor}{n} \to 1-\frac{1}{p^{\omega_0}}-\frac{1}{p^{\omega_1}}.
	\end{align*}
	In the last equality we used that $\mathds{P}_n\big[ I_p\geq \omega\big]=1/p^{\omega}+o(1)$ and that conditioned on $N\equiv 1 \mod p$, we have $I_p^{N-1}\geq 1$, hence, we divide $1/p^{\omega_1}$ by $1-\mathds{P}_n\big[I_p^{N-1}=0\big]$.
\end{proof}

\begin{proof}[Proof of Theorem~\ref{thm:00}]
	In the proof of Proposition~\ref{prop:30}, we already established the relative density in~\eqref{eq:10} and the probability mass function of the limiting distribution $(X_0,X_1)$. The formula for $C_0= \mathds{E} \max\{ X_0,X_1\}$ is given by:
	\begin{align}
	C_0 =\, &\pi(2,2) + 2\sum_{k=2}^\infty k\,\big(\pi(2,k+1)-\pi(2,k)\big) \label{eq:20} \\
	&+ \sum_{k=2}^\infty \sum_{\ell=2}^\infty \max\{k,\ell\} \big( \pi(k+1,\ell+1)-\pi(k+1,\ell)-\pi(k,\ell+1)+\pi(k,\ell) \big). \nonumber
	\end{align}
	Table~\ref{table:ApproxC0} below contains approximations of the value of $C_0$ by using the first $m$ prime numbers and truncating the sums at $\|\cdot\|_{\infty}\leq n$.
\end{proof}

\begin{table}[H]
	\caption{Approximating $C_0$ by using the first $m$ prime numbers and truncating the sums at $\|\cdot\|_{\infty}\leq n$.} \label{table:ApproxC0}
	\begin{center}
		\begin{tabular}{c|rrrr}
			\toprule
			$m$ & $n=30$ & $n=40$ & $n=50$ & $n=60$  \\ \midrule
			\rowcolor[HTML]{E8E8E8}
			$10^3$ & 2.288361286306563 & 2.288361316070792
			& 2.288361316108944
			& 2.288361316108984
			\\
			$10^4$ & 2.288368990545230
			& 2.288369020309459
			& 2.288369020347611
			& 2.288369020347651
			\\
			\rowcolor[HTML]{E8E8E8}
			$10^5$ & 2.288369450036167
			& 2.288369479800395
			& 2.288369479838548
			& 2.288369479838588
			\\
			$10^6$ & 2.288369480701602
			& 2.288369510465830
			& 2.288369510503983
			& 2.288369510504023
			\\
			\rowcolor[HTML]{E8E8E8}
			$10^7$ & 2.288369482843734
			& 2.288369512607963
			& 2.288369512646115
			& 2.288369512646155
			\\
			$10^8$ & 2.288369482843734
			& 2.288369512607963
			& 2.288369512646115
			& 2.288369512646155
			\\
			\bottomrule
		\end{tabular}
	\end{center}
\end{table}

%%%%%%%%%%%%%%%%%%%%%%%%%%%%%%%%%%%%%%%%%%%%%%%%%%%%%%%%%%%%%%%%%%%%%%
\subsection{Joint distribution of \texorpdfstring{$(\|N_n\|_\infty,\|N_n-1\|_\infty, \|N_n-2\|_\infty)$}{(|N_n|_infty,|N_n-1|_infty, |N_n-2|_infty)}}

The technique is analogous to the previous case, only the notation and formulas get more involved. Let
\begin{align*}
\Pi_n(\omega_0,\omega_1, \omega_2) &:= \mathds{P}_n[\|N\|_{\infty} = \omega_0, \|N-1\|_{\infty} = \omega_1, \|N-2\|_{\infty} = \omega_2], \\
\Pi_n^{<}(\omega_0,\omega_1, \omega_2) &:= \mathds{P}_n[\|N\|_{\infty} < \omega_0, \|N-1\|_{\infty} < \omega_1, \|N-2\|_{\infty} < \omega_2],
\end{align*}
and also define
\begin{equation*}
\pi(\omega_0,\omega_1, \omega_2):= \prod_{p}\! \bigg( 1- \frac{1}{p^{\omega_0}} - \frac{1}{p^{\omega_1}} - \frac{1}{p^{\omega_2}}\bigg) \;\text{ and }\; \Pi(\omega_0,\omega_1, \omega_2):= \lim_{n\to\infty} \Pi_n(\omega_0, \omega_1,\omega_2).
\end{equation*}

\begin{prop}\label{prop:31}
	The following asymptotics hold for $\omega_0,\omega_1,\omega_2\geq 2$:
	\begin{align*}
	\Pi(1, 1,1) &= \pi(2,2,2), \\
	\Pi(1, 1,\omega_2) &= \pi(2,2,\omega_2+1)-\pi(2,2,\omega_2), \\
	\Pi(1,\omega_1,1) &= \pi(2,\omega_1+1,2)-\pi(2,\omega_1,2) , \\
	\Pi(\omega_0, 1,1) &= \pi(\omega_0+1,2,2)-\pi(\omega_0,2,2) ,
	\end{align*}
	moreover,
	\begin{align*}
	\Pi(1,\omega_1, \omega_2) &= \pi(2,\omega_1+1,\omega_2+1)-\pi(2,\omega_1+1,\omega_2)-\pi(2,\omega_1,\omega_2+1)+\pi(2,\omega_1,\omega_2), \\
	\Pi(\omega_0,1, \omega_2) &= \pi(\omega_0+1,2,\omega_2+1)-\pi(\omega_0+1,2,\omega_2)-\pi(\omega_0,2,\omega_2+1)+\pi(\omega_0,2,\omega_2), \\
	\Pi(\omega_0, \omega_1,1) &= \pi(\omega_0+1,\omega_1+1,2)-\pi(\omega_0+1,\omega_1,2)-\pi(\omega_0,\omega_1+1,2)+\pi(\omega_0,\omega_1,2),
	\end{align*}
	and finally,
	\begin{align*}
	\Pi(\omega_0, \omega_1,\omega_2) &= \pi(\omega_0+1,\omega_1+1,\omega_2+1)-\pi(\omega_0+1,\omega_1+1,\omega_2) \\ &-\pi(\omega_0+1,\omega_1,\omega_2+1)+\pi(\omega_0+1,\omega_1,\omega_2)- \pi(\omega_0,\omega_1+1,\omega_2+1) \\ &+\pi(\omega_0,\omega_1+1,\omega_2)+\pi(\omega_0,\omega_1,\omega_2+1)-\pi(\omega_0,\omega_1,\omega_2).
	\end{align*}
\end{prop}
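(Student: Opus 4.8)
The plan is to follow the same three-step scheme as in the proof of Proposition~\ref{prop:30}, with the extra care required when passing from two to three consecutive integers. As before, the first step is purely formal: the eight identities in the statement are nothing but the three-dimensional inclusion--exclusion (``differencing'') expansion of the event $\{\|N\|_\infty=\omega_0,\,\|N-1\|_\infty=\omega_1,\,\|N-2\|_\infty=\omega_2\}$ in terms of the cumulative events counted by $\Pi_n^{<}$. Writing each condition $\|N-j\|_\infty=\omega_j$ as the difference of $\|N-j\|_\infty<\omega_j+1$ and $\|N-j\|_\infty<\omega_j$, and expanding the product of three such differences, yields $2^3$ signed $\Pi_n^{<}$-terms; in the boundary cases where some $\omega_j=1$ the corresponding factor is replaced by the single event $\|N-j\|_\infty<2$, using that $\{\|M\|_\infty=1\}$ and $\{\|M\|_\infty<2\}$ differ only by $\{M=1\}$, which has zero limiting density. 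This is exactly why the formulas for $\Pi(1,1,1),\Pi(1,1,\omega_2),\ldots$ carry $1,2,4$ terms respectively. Hence everything reduces to establishing $\lim_{n\to\infty}\Pi_n^{<}(\omega_0,\omega_1,\omega_2)=\pi(\omega_0,\omega_1,\omega_2)$ for all $\omega_0,\omega_1,\omega_2\geq 2$.

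For this, I would factor over primes exactly as in Proposition~\ref{prop:30}: by the Chinese remainder theorem the per-prime events are (asymptotically) independent, so
\[
\Pi_n^{<}(\omega_0,\omega_1,\omega_2)=\prod_{p<n}\mathds{P}_n\big[I_p^N<\omega_0,\,I_p^{N-1}<\omega_1,\,I_p^{N-2}<\omega_2\big],
\]
and it remains to compute the single-prime factor. Here I would pass to the complementary events $A_0=\{I_p^N\geq\omega_0\}$, $A_1=\{I_p^{N-1}\geq\omega_1\}$, $A_2=\{I_p^{N-2}\geq\omega_2\}$, whose individual probabilities tend to $p^{-\omega_0},p^{-\omega_1},p^{-\omega_2}$. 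If these three events are pairwise disjoint, inclusion--exclusion collapses to a single sum and the factor tends to $1-p^{-\omega_0}-p^{-\omega_1}-p^{-\omega_2}$, which is precisely the $p$-th factor of $\pi$. For odd $p$ disjointness is immediate: $A_i$ forces $p\mid N-i$, and two of these would force $p$ to divide a difference lying in $\{1,2\}$, impossible for $p\geq 3$.

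The only genuinely new phenomenon, and the step I expect to be the crux, is the prime $p=2$, where $N$ and $N-2$ share parity and so $A_0$ and $A_2$ are no longer automatically disjoint. The event $A_1$ still splits off by parity, since $A_1$ needs $N$ odd while $A_0,A_2$ need $N$ even. For $A_0\cap A_2$ I would use the hypothesis $\omega_0,\omega_2\geq 2$ decisively: $A_0\cap A_2$ requires $4\mid N$ and $4\mid N-2$ simultaneously, forcing $4\mid 2$, a contradiction. Thus $A_0\cap A_2=\varnothing$ as well, all three events are pairwise disjoint at $p=2$ too, and the single-prime factor again converges to $1-2^{-\omega_0}-2^{-\omega_1}-2^{-\omega_2}$. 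This is exactly where the blanket assumption $\omega_j\geq 2$ is used, and it explains why the differencing formulas only ever invoke $\pi$ with arguments at least $2$.

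Finally, I would assemble the infinite product: the convergence of $\prod_{p<n}$ to $\prod_{p}$ and the interchange of limit and product are justified by the Kubilius total-variation estimate already quoted, the tail primes contributing factors $1+o(1)$ that are controlled uniformly, exactly as in the $k$-free and pair cases. Substituting $\lim_{n}\Pi_n^{<}=\pi$ into the inclusion--exclusion identities of the first step then yields all the asserted formulas for $\Pi(\omega_0,\omega_1,\omega_2)$ and its boundary specialisations. I do not foresee any obstacle beyond the $p=2$ collision; the remaining work is the routine but lengthy bookkeeping of the eight signed terms.
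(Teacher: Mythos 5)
Your proposal is correct, and while it shares the paper's overall skeleton --- differencing into $\Pi_n^{<}$ terms, CRT factorisation over primes, then a per-prime limit --- your treatment of the crux step is genuinely different from, and cleaner than, the paper's. The paper evaluates each factor $\mathds{P}_n\big[I_p^N<\omega_0,\, I_p^{N-1}<\omega_1,\, I_p^{N-2}<\omega_2\big]$ by peeling off the pair case of Proposition~\ref{prop:30} and then computing $\mathds{P}_n\big[I_p^N<\omega_0,\, I_p^{N-1}<\omega_1,\, I_p^{N-2}\geq\omega_2\big]$ via explicit conditioning on residue classes, which at $p=2$ forces a further split into the sub-cases $\omega_0=2$ and $\omega_0>2$ (the latter disposed of by noting that $4$ cannot divide both $N$ and $N-2$). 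You instead pass to the complementary events $A_i=\{p^{\omega_i}\mid N-i\}$ and observe that they are pairwise disjoint: trivially for the pairs involving $A_1$ at every prime, for $A_0,A_2$ at odd primes since $p\nmid 2$, and for $A_0,A_2$ at $p=2$ by exactly that same $4\mid N$, $4\mid N-2$ contradiction, which is where $\omega_0,\omega_2\geq 2$ enters. Finite additivity then gives, exactly for every $n$,
\begin{equation*}
\mathds{P}_n\big[I_p^N<\omega_0,\, I_p^{N-1}<\omega_1,\, I_p^{N-2}<\omega_2\big]
=1-\mathds{P}_n[A_0]-\mathds{P}_n[A_1]-\mathds{P}_n[A_2]
\;\longrightarrow\; 1-\frac{1}{p^{\omega_0}}-\frac{1}{p^{\omega_1}}-\frac{1}{p^{\omega_2}},
\end{equation*}
with error coming only from $\mathds{P}_n[A_i]=p^{-\omega_i}+O(1/n)$. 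This buys you three things: no conditional-probability computations at all, an exact rather than merely asymptotic per-prime identity, and a transparent explanation of the role of the hypothesis $\omega_j\geq 2$. It also scales better: for the quadruple case of Remark~\ref{rm:00} your argument disposes of $p=3$ at once ($9\mid N$ and $9\mid N-3$ is contradictory), whereas the paper's conditioning scheme would need another layer of sub-cases. The one step you share with the paper, and which remains informal in both treatments, is the factorisation $\Pi_n^{<}=\prod_{p<n}\mathds{P}_n[\cdots]$ together with the interchange of the limit $n\to\infty$ with the infinite product; citing the Kubilius total-variation bound, as both of you do, is the appropriate justification there.
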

\begin{proof}
	With the same differencing argument as in the proof of Proposition~\ref{prop:30}, we can express the probabilities $\Pi_n(\omega_0, \omega_1,\omega_2)$ with the probabilities $\Pi_n^{<}(\omega_0,\omega_1, \omega_2)$. Moreover, the same argument implies that
	\begin{equation*}
	\Pi_n^{<}(\omega_0,\omega_1, \omega_2) = \prod_{p<n} \mathds{P}_n\big[ I_p^{N}<\omega_0,\, I_p^{N-1}<\omega_1,\, I_p^{N-2}<\omega_2\big].
	\end{equation*}
	Hence, it is enough to show that for every $\omega_0,\omega_1,\omega_2\geq 2:$ 
	$$\mathds{P}_n\big[ I_p^{N}<\omega_0,\, I_p^{N-1}<\omega_1,\, I_p^{N-2}<\omega_2\big]\to1-\frac{1}{p^{\omega_0}}-\frac{1}{p^{\omega_1}}-\frac{1}{p^{\omega_2}} \;\text{ as } n\to\infty.$$
	The argument is a proper adaptation of the one in the proof of Proposition~\ref{prop:30}: 
	\begin{align*}
	\mathds{P}_n\big[ I_p^{N}&<\omega_0,\, I_p^{N-1}<\omega_1,\, I_p^{N-2}<\omega_2\big] \\
	&= \mathds{P}_n\big[ I_p^{N}<\omega_0,\, I_p^{N-1}<\omega_1\big] - \mathds{P}_n\big[ I_p^{N}<\omega_0,\, I_p^{N-1}<\omega_1,\, I_p^{N-2}\geq \omega_2\big].
	\end{align*}
	We know the first term tends to $1-1/p^{\omega_0}-1/p^{\omega_1}$, thus, it is enough to show that the second term tends to $1/p^{\omega_2}$. The case $p=2$ requires separate treatment.
	
	First assume that $p\geq 3:$
	\begin{align*}
	\mathds{P}_n\big[ I_p^{N}&<\omega_0,\, I_p^{N-1}<\omega_1,\, I_p^{N-2}\geq \omega_2\big] 
	= \mathds{P}_n\big[ I_p^{N} = 0,\, I_p^{N-1} = 0,\, I_p^{N-2}\geq \omega_2\big] \\
	&= \sum_{\ell=0}^{p-1}\underbrace{\mathds{P}_n\big[ I_p^{N} = 0,\, I_p^{N-1} = 0,\, I_p^{N-2}\geq \omega_2,\, N\equiv \ell\!\! \mod p\big]}_{\neq 0 \;\Longleftrightarrow\; \ell=2} \\
	&= \mathds{P}_n\big[ I_p^{N-2}\geq \omega_2 \big|\, I_p^{N} = 0,\, I_p^{N-1} = 0,\, N\equiv 2\!\! \mod p\big]\cdot \mathds{P}_n\big[ N\equiv 2\!\! \mod p\big] \\
	&= \frac{1/p^{\omega_2}+o(1)}{1-(1-1/p)+o(1)} \cdot  \frac{\lfloor n/p \rfloor}{n} \to \frac{1}{p^{\omega_2}}.
	\end{align*}
	
	Now assume that $p=2:$
	\begin{equation*}
	\mathds{P}_n\big[ I_p^{N}<\omega_0,\, I_p^{N-1}<\omega_1,\, I_p^{N-2}\geq \omega_2\big] 
	= \mathds{P}_n\big[ 1\leq I_p^{N} < \omega_0,\, I_p^{N-1} = 0,\, I_p^{N-2}\geq \omega_2\big].
	\end{equation*}
	First consider the case $\omega_0=2:$
	\begin{align*}
	\mathds{P}_n&\big[I_p^{N}=1,\, I_p^{N-1} = 0,\, I_p^{N-2}\geq \omega_2\big] = \mathds{P}_n\big[I_p^{N}=1,\, I_p^{N-1} = 0,\, I_p^{N-2}\geq \omega_2,\, N\equiv 0\!\! \mod p\big] \\
	&= \mathds{P}_n\big[ I_p^{N-2}\geq \omega_2 \big|\, I_p^{N} = 1,\, I_p^{N-1} = 0,\, N\equiv 0\!\! \mod p\big] \\
	&\qquad\cdot \mathds{P}_n\big[ I_p^{N} = 1,\, I_p^{N-1} = 0 \big|\, N\equiv 0\!\! \mod p\big]\cdot \mathds{P}_n\big[ N\equiv 0\!\! \mod p\big] \\
	&= \frac{1/p^{\omega_2}+o(1)}{1-(1-1/p)(1+1/p)+o(1)} \cdot \frac{(1-1/p)1/p+o(1)}{1-(1-1/p)+o(1)} \cdot \frac{\lfloor n/p \rfloor}{n} \to \frac{1}{p^{\omega_2}},
	\end{align*}
	where we also used that $1-1/2=1/2$. Finally, assume $\omega_0>2:$ 
	\begin{align*}
	\mathds{P}_n&\big[I_p^{N}=1,\, I_p^{N-1} = 0,\, I_p^{N-2}\geq \omega_2\big]  \\
	&= \mathds{P}_n\big[I_p^{N}=1,\, I_p^{N-1} = 0,\, I_p^{N-2}\geq \omega_2\big] +\mathds{P}_n\big[2\leq I_p^{N}<\omega_0,\, I_p^{N-1} = 0,\, I_p^{N-2}\geq \omega_2\big].
	\end{align*}
	The second probability is equal to $0$. Indeed, both $N$ and $N-2$ can not be divisible by $4$ (recall $\omega_2\geq 2$). The other term is exactly the same as in the previous point, where $\omega_0=2$. This concludes the proof.
\end{proof}

\begin{remark}\label{rm:00}
Without checking all the details, we believe the asymptotic density
\begin{equation*}
	\lim_{n\to\infty}\frac{\big|\big\{M\leq n:\; \|M-j\|_\infty <\omega_j \text{ for } j=0,1,2,3 \big\}\big|}{n} = \prod_{p:\, \mathrm{prime}}\! \bigg( 1- \sum_{j=0}^3\frac{1}{p^{\omega_j}} \bigg) %=: \pi(\pmb{\omega})
\end{equation*}
still holds for all quadruplets $\omega_0,\omega_1,\omega_2,\omega_3\geq 2$. The proof goes through without difficulty, except that $p=3$ needs to be handled separately as well (besides $p=2$). Even for $\omega_0=\omega_1=\omega_2=\omega_3= 2$ we get the correct density of 0, agreeing with the fact that four consecutive numbers can never all be square-free. See Section~\ref{subsec:21} for details on these `forbidden words' in $\pmb{\Omega}$. 

For sequences with length $k\geq 5$, the asymptotic density clearly can not be equal to $\prod_{p} \big( 1- \sum_{j=0}^{k-1}p^{-\omega_j} \big)$ in general. For sequences of positive density, additional multiplicative factors depending on the sequence can be expected to appear coming from considerations about small primes. 
\end{remark}

%%%%%%%%%%%%%%%%%%%%%%%%%%%%%%%%%%%%%%%%%%%%%%%%%%%%%%%%%%%%%%%%%%%%%%
\subsection{Forbidden words in \texorpdfstring{$\pmb{\Omega}$}{Omega}}\label{subsec:21}
We consider ${\pmb{\Omega}}$ as an infinite sequence of letters from the alphabet $\mathcal{A}=\mathbb N$. Let $\pmb{\omega}=\omega_1,\ldots,\omega_n$ denote a word of length $|\pmb{\omega}|=n$ from $\mathcal{A}$. There is a set $\mathcal F$ of forbidden words which never appear in $\pmb{\Omega}$. Observe that any subsequence of consecutive symbols of length $2^n$ must contain at least one element with $\|\cdot\|_\infty\geq n$ for any $n>1$. This defines 
\begin{equation}\label{eq:def_ForbiddenWords}
\mathcal F = \bigcup_{n=1}^\infty \mathcal{F}_n = \bigcup_{n=1}^\infty \big\{\pmb{\omega}:\; |\pmb{\omega}|=2^{n+1},\, \omega_i<n+1 \text{ for } 1\leq i\leq 2^{n+1}\big\}.
\end{equation} 
It describes additional structure present in the sequence $\pmb{\Omega}$. Though, it does not influence the value of $C_0$ because it depends only on two consecutive symbols and the shortest forbidden word has length four. 
\begin{problem}\label{prob:ForbiddenWords}
Does every finite length $\pmb{\omega}\notin\mathcal{F}$ appear in $\pmb{\Omega}$, which does not contain any $\pmb{\tau}\in\mathcal{F}$ as a subsequence of consecutive symbols?
\end{problem}
We believe the answer is affirmative. Here is a simple argument to try to find a specific $\pmb{\omega}$ in $\pmb{\Omega}$. Consider an arbitrary $\pmb{\omega}=\omega_1,\ldots,\omega_n\notin \mathcal F$, which does not contain any $\pmb{\tau}\in\mathcal{F}$ as a subsequence of consecutive symbols. Choose any $n$ distinct primes $\mathbf p= (p_1,\ldots,p_n)$ and look at the system of congruences
\begin{equation}\label{eq:CongSystem}
x+i-1 \equiv 0\mod p_i^{w_i}, \;\;i=1,\ldots,n.
\end{equation}
%Since the $p_i$ are coprime, 
The Chinese remainder theorem implies that there exists a unique $x$ between $1$ and $M=\prod_{i=1}^n p_i^{w_i}$ which satisfies \eqref{eq:CongSystem}. Then of course $\|x+i-1\|_\infty\geq \omega_i$. If all are equalities, then we found $\pmb{\omega}$ in $\pmb{\Omega}$. If not, then we can try with $x+kM$ for some positive integer $k$, since all such numbers satisfy \eqref{eq:CongSystem}. It is unclear whether such a $k$ always exists. It need not exist for all choices of $\mathbf p$. For example, $\pmb{\omega}=1,1,1$ never appears in $\pmb{\Omega}$ with $\mathbf p=(2,3,5)$, but when $\mathbf p=(5,2,7)$ it does with $x=5$. For illustration, we give some non-trivial examples of $\pmb{\omega}$ in $\pmb{\Omega}$ in Table \ref{table:FindWordInOmega}. 

\begin{table}[H]
	\caption{Selected non-trivial $\pmb{\omega}$ with their place of appearance in $\pmb{\Omega}$} \label{table:FindWordInOmega}
	\begin{center}
		\begin{tabular}{l|rrr}
			\toprule
			$\pmb{\omega}$ & $x+kM$ & $k$ & $\mathbf p$ \\ \midrule
			\rowcolor[HTML]{E8E8E8}
			$17,30$ & $27\,699\,975\,238\,617\,792\,512$ & $1$ & $(2,3)$ \\
			$1,15,3,14$ & $18\,890\,469\,353\,465\,057\,219\,498$ & $7$ & $(2,3,5,7)$  \\
			\rowcolor[HTML]{E8E8E8}
			$1, 2, 2, 1, 3, 5, 2, 1$ & $93\,377\,215\,627\,231\,323$ & $16$ & $(3, 2, 5, 7, 11, 13, 17, 19)$ \\
			\bottomrule
		\end{tabular}
	\end{center}
\end{table}

We conjecture a stronger statement claiming that all such $\pmb{\omega}$ have a strictly positive limiting density in $\pmb{\Omega}$.

\begin{conjecture}\label{conj:LimitDens}
	Assume $\pmb{\omega}=(\omega_0,\omega_1,\ldots,\omega_k)\notin\mathcal{F}$ is a finite length word which does not contain any $\pmb{\tau}\in\mathcal{F}$ as a subsequence of consecutive symbols. Then the limit
	\begin{equation*}
	\lim_{n\to\infty}\frac{\big|\big\{M\leq n:\; \|M-j\|_\infty =\omega_j \text{ for } j=0,1\ldots,k \big\}\big|}{n} 
	\end{equation*}
	exists and is strictly positive.
\end{conjecture}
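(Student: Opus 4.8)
The plan is to prove existence and strict positivity separately, treating existence as a routine extension of the paper's machinery and concentrating the effort on positivity. For existence I would first reduce the ``$=$'' event to ``$<$'' events by the inclusion--exclusion (differencing) identity already used in Propositions~\ref{prop:30} and~\ref{prop:31}: writing $\mathds 1[\,\|M-j\|_\infty=\omega_j\ \forall j\,]=\prod_{j=0}^{k}\big(\mathds 1[\|M-j\|_\infty<\omega_j+1]-\mathds 1[\|M-j\|_\infty<\omega_j]\big)$ and expanding gives a signed sum of $2^{k+1}$ probabilities $\Pi_n^{<}(\pmb\omega+\mathds 1_S)$ over $S\subseteq\{0,\dots,k\}$, where $\mathds 1_S$ is the indicator vector of $S$. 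Any term in which a coordinate with $\omega_j=1$ uses the argument $1$ restricts $M-j$ to the single value $1$ and hence has density $0$; discarding these, every surviving term has all arguments $\geq 2$. For such a term the Chinese remainder factorisation $\Pi_n^{<}(\pmb\omega')=\prod_{p<n}\mathds P_n[v_p(N-i)<\omega_i'\ \forall i]+o(1)$ holds exactly as in Proposition~\ref{prop:30}, with each local factor converging to
\[
g_p(\pmb\omega')=\sum_{\substack{S\subseteq\{0,\dots,k\}\\ \text{compatible at }p}}(-1)^{|S|}\,p^{-\max_{i\in S}\omega_i'},
\]
where $S$ is \emph{compatible at $p$} when $i\equiv i'\ (\mathrm{mod}\ p^{\min(\omega_i',\omega_{i'}')})$ for all $i,i'\in S$. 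For $p>k$ this collapses to $1-\sum_i p^{-\omega_i'}$, so only finitely many local factors differ from the generic one, and since all $\omega_i'\geq 2$ one has $\sum_p(1-g_p)<\infty$; hence $\prod_p g_p$ converges and the limit in the conjecture exists.

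The one technical point in the existence half is the passage from the finite product $\prod_{p<n}$ to the infinite product $\prod_p g_p$. I would handle it exactly as the paper implicitly does: use Kubilius's total variation bound to replace the valuation vectors $(v_p(N),\dots,v_p(N-k))$ for $p\leq b$ by their independent limits with $b=b(n)$, $\log b/\log n\to 0$, and control the tail of primes $b<p<n$ by $\sum_{p>b}\sum_i p^{-\omega_i'}\to 0$. This is bookkeeping and extends the $k\leq 2$ arguments verbatim; I expect no real difficulty here.

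The substance of the conjecture is strict positivity, where the signed sum is not manifestly positive, so I would instead exhibit an explicit subset of positive density. In the limit the joint law is that of $(X_0,\dots,X_k)$ with $X_j=\max_p v_p(M-j)$, the valuation vectors being independent across $p$; thus $\Pi(\pmb\omega)>0$ iff the configuration $X_j=\omega_j$ for all $j$ has positive probability. I would build this in two steps: (i) pick distinct primes $q_0,\dots,q_k>k$ and impose $v_{q_j}(M-j)=\omega_j$ exactly via $M\equiv j\ (\mathrm{mod}\ q_j^{\omega_j})$ together with $M\not\equiv j\ (\mathrm{mod}\ q_j^{\omega_j+1})$, which is consistent by the Chinese remainder theorem and pins each maximum at $\geq\omega_j$; (ii) intersect with the event $v_p(M-j)\leq\omega_j$ for all remaining $p$ and all $j$. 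Step (ii) is a squarefree-type sieve condition whose density equals a positive constant (from the fixed congruences at the $q_j$) times $\prod_{p\neq q_j}g_p(\pmb\omega+\mathbf 1)$, and this is strictly positive precisely when every local factor $g_p(\pmb\omega+\mathbf 1)$ is nonzero, i.e.\ when at each prime $p$ the classes $\{M\equiv j\ (\mathrm{mod}\ p^{\omega_j+1})\}_{j=0}^{k}$ fail to cover $\mathbb Z$.

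The crux --- and the step I expect to be genuinely hard --- is therefore the combinatorial claim that avoiding $\mathcal F$ rules out a covering obstruction at \emph{every} prime, not just at $p=2$. The set $\mathcal F$ in~\eqref{eq:def_ForbiddenWords} encodes only the $p=2$ coverings, so one must show that a covering of $\mathbb Z$ by $\{M\equiv j\ (\mathrm{mod}\ p^{\omega_j+1})\}$ at any prime $p\geq 3$ forces, inside the same window, a run of consecutive letters that is itself a $p=2$ forbidden word. The heuristic is persuasive: every modulus $p^{\omega_j+1}\geq p^2$, so covering $\mathbb Z/p\mathbb Z$ recursively requires many positions concentrated in long stretches of small letters --- the cheapest cover at $p=3$ being nine consecutive $1$'s, which already contains the length-four word $1,1,1,1\in\mathcal F_1$ --- and larger primes only demand longer stretches. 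Turning this into a rigorous induction on the $p$-adic depth of the covering that extracts an $\mathcal F$-subword is the heart of the matter; it is exactly the phenomenon flagged in Remark~\ref{rm:00}, where small primes beyond $p=2$ must be handled separately. Once this lemma is in place, all $g_p(\pmb\omega+\mathbf 1)>0$, the constructed subset has positive density, and the conjecture follows.
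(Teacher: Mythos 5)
First, a point of order: the paper offers no proof of this statement --- it is posed there as an open conjecture, alongside the weaker Problem~\ref{prob:ForbiddenWords} --- so your attempt can only be judged on its own merits. Judged so, it has a fatal gap, located exactly where you placed your bet: the combinatorial lemma that $\mathcal{F}$-avoidance excludes a covering obstruction at every prime is false, and it fails already at $p=2$, not just for $p\geq 3$. The set $\mathcal{F}$ in \eqref{eq:def_ForbiddenWords} forbids only the ``trivial'' coverings in which $2^{n+1}$ consecutive positions all carry letters smaller than $n+1$; it does not forbid coverings assembled from letter-$1$ positions staggered over different residue classes. Concretely, take $k=11$ and
\begin{equation*}
\pmb{\omega}=(1,1,1,2,1,1,1,3,1,1,2,1).
\end{equation*}
No element of $\mathcal{F}$ occurs in this word: the runs of $1$'s have length at most $3$, every window of length $8$ contains the letter $3$ at position $7$, and the word is shorter than $16$. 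Yet the positions carrying the letter $1$, namely $j\in\{0,1,2,4,5,6,8,9,11\}$, meet all four residue classes modulo $4$ (position $11$ covers $3\bmod 4$). Hence for every integer $M$ there is a $j^*$ with $\omega_{j^*}=1$ and $M\equiv j^*\pmod{4}$, so that $\|M-j^*\|_\infty\geq v_2(M-j^*)\geq 2>\omega_{j^*}$. The word therefore never appears in $\pmb{\Omega}$, its density exists and equals $0$, and Conjecture~\ref{conj:LimitDens} (and with it Problem~\ref{prob:ForbiddenWords}) is false as stated; no proof strategy could have closed it. A similar staggered construction of length $28$ (large letters at positions $\equiv 3 \bmod 4$, ruler-style) produces an $\mathcal{F}$-avoiding word killed by a covering modulo $9$, so the failure is not special to $p=2$.

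The instructive part is that your own framework is what exposes this. Your reduction of strict positivity to ``no covering of $\mathbb{Z}$ by the classes $j\bmod p^{\omega_j+1}$ at any prime'' --- via the auxiliary primes $q_j$ pinning each equality and the local factors $g_p$ --- is sound, and the existence half (differencing, CRT factorisation, Kubilius truncation, convergence of $\prod_p g_p$) is workable and consistent with the paper's arguments in Propositions~\ref{prop:30} and~\ref{prop:31}. Where the proposal goes wrong is the heuristic that cheapest covers are supported on consecutive positions with equal small letters (``nine consecutive $1$'s at $p=3$''): cheap covers can instead spread their residues across the word, which is precisely what $\mathcal{F}$ cannot see. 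The salvageable conclusion is that the conjecture should be restated with the hypothesis ``$\pmb{\omega}$ avoids $\mathcal{F}$'' replaced by your covering criterion: for every prime $p$ the classes $\{j\bmod p^{\omega_j+1}\}_{j=0}^{k}$ do not cover $\mathbb{Z}$. Under that hypothesis your two-step positivity construction appears to go through; under the paper's hypothesis it cannot.
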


%%%%%%%%%%%%%%%%%%%%%%%%%%%%%%%%%%%%%%%%%%%%%%%%%%%%%%%%%%%%%%%%%%%%%%
%%%%%%%%%%%%%%%%%%%%%%%%%%%%%%%%%%%%%%%%%%%%%%%%%%%%%%%%%%%%%%%%%%%%%%
\section{Asymptotic growth of \texorpdfstring{$L_\infty(N)$}{L_infty}, proof of Theorem~\ref{thm:01}  }\label{sec:Linfty}

Recall, $L_\infty (N) = \sum_{M=2}^{N} \max\{\|M\|_\infty,\|M-1\|_\infty\}$ and the notation
\begin{equation*}
\Pi_N(\omega_0,\omega_1) = \frac{\big|\big\{ M\leq N:\; \|M\|_{\infty} = \omega_0, \|M-1\|_{\infty} = \omega_1 \big\}\big|}{N}.
\end{equation*}
\begin{proof}[Proof of Theorem~\ref{thm:01}]
 It is enough to approximate $L_\infty (N)$ from below with the sequence
\begin{equation*}
L_k(N):= \sum_{M=2}^{N} \max\{\|M\|_k,\|M-1\|_k\},
\end{equation*}
where $\|M\|_k:=\min\{\|M\|_\infty,k\}$. On one hand, the limit 
\begin{equation}\label{eq:30}
C_k:= \lim_{N\to\infty}\frac{1}{N} L_k(N) = \sum_{\omega_0=1}^k \sum_{\omega_1=1}^k \max\{\omega_0,\omega_1\}  \lim_{N\to\infty} \Pi_N(\omega_0,\omega_1)
\end{equation}
exists, because the value of $\lim_{N\to\infty} \Pi_N(\omega_0,\omega_1)$ is well-defined from Proposition~\ref{prop:30}. Moreover, the sequence $C_k$ is non-decreasing and is bounded from above by
\begin{equation*}
\lim_{N\to\infty} \frac{1}{N} L_\infty(N) \leq  \lim_{N\to\infty} \frac{2}{N} \sum_{M=2}^N \|M\|_{\infty} \stackrel{\eqref{eq:vsz_Ninfty=k}}{=} 2 \sum_{\ell=1}^\infty \ell \Big(\frac{1}{\zeta(\ell+1)}-\frac{1}{\zeta(\ell)}\Big) = %\sum_{k=1}^{\infty} \sum_{i=k}^{\infty} \Big(\frac{1}{\zeta(i+1)}-\frac{1}{\zeta(i)}\Big) + \varepsilon = 
2\sum_{\ell=1}^{\infty} \Big(1-\frac{1}{\zeta(\ell)}\Big),
\end{equation*}
which is an absolutely convergent series. Hence, $C_k$ has a limit and comparing \eqref{eq:30} with \eqref{eq:20} we see that $\lim_{k\to\infty}C_k=C_0$.

On the other hand, there is an absolute constant $c>0$ for which 
\begin{equation*}
\frac{L_k(N)}{N}\leq \frac{L_\infty(N)}{N} \leq \frac{L_k(N)}{N} +2 \sum_{\ell>k}\sum_{p:\, \mathrm{prime}} \frac{\ell}{p^\ell} \leq \frac{L_k(N)}{N} + \sum_{p:\, \mathrm{prime}} \frac{c}{p^{k+1}}
\end{equation*}
First letting $N\to\infty$ and then $k\to\infty$, we conclude that $\lim_{N\to\infty}L_\infty(N)/N= C_0$.
\end{proof}

\subsection{Direct computation of \texorpdfstring{$L_\infty$}{L_infty}}\label{subsec:DirectCompOfLinfty}
In order to analyze the distribution of prime gaps $\mathcal{D}_k^1= L_\infty(p_{k+1})-L_\infty(p_k)$, we kept track of the values of $L_\infty$ for all primes. See Appendix~\ref{sec:Code} for the complete program code with explanations. To obtain the list plot in Figure~\ref{fig:ratio}, we calculated the ratios $L_\infty(p_k)/p_k$ for every $10^5$-th prime between $1$ and $10^{12}$. This gave a set of $376\,079$ data points. The minimal value is $2.28836250$ and the maximal is $2.288371417,$ giving a difference of $8.9\times 10^{-6}$. The list plot revealed that the fluctuations of the ratios diminished quite rapidly around $C_0$ from Theorem~\ref{thm:00}.

To get an estimate on the order of magnitude of the error term, we calculated
\begin{equation*}
\alpha(N):=\frac{\log |L_{\infty}(N)-C_0\cdot N|}{\log N}
\end{equation*}
for our data points, and obtained the list plot shown in Figure~\ref{fig:errorestimate}. The maximum value is $0.298731$, obtained at $848\,321\,917$. The last place where it exceeds $0.28$ is at $20\,571\,786\,113$. The plot shows that most frequently the value lies around $0.22$, but this may be misleading, since $N=10^{12}$ is not too large. We do conjecture that $L_{\infty}(N)=C_0\cdot N +O(N^{0.28})$.
\begin{figure}[H]
	\centering
	\includegraphics[width=0.92\textwidth]{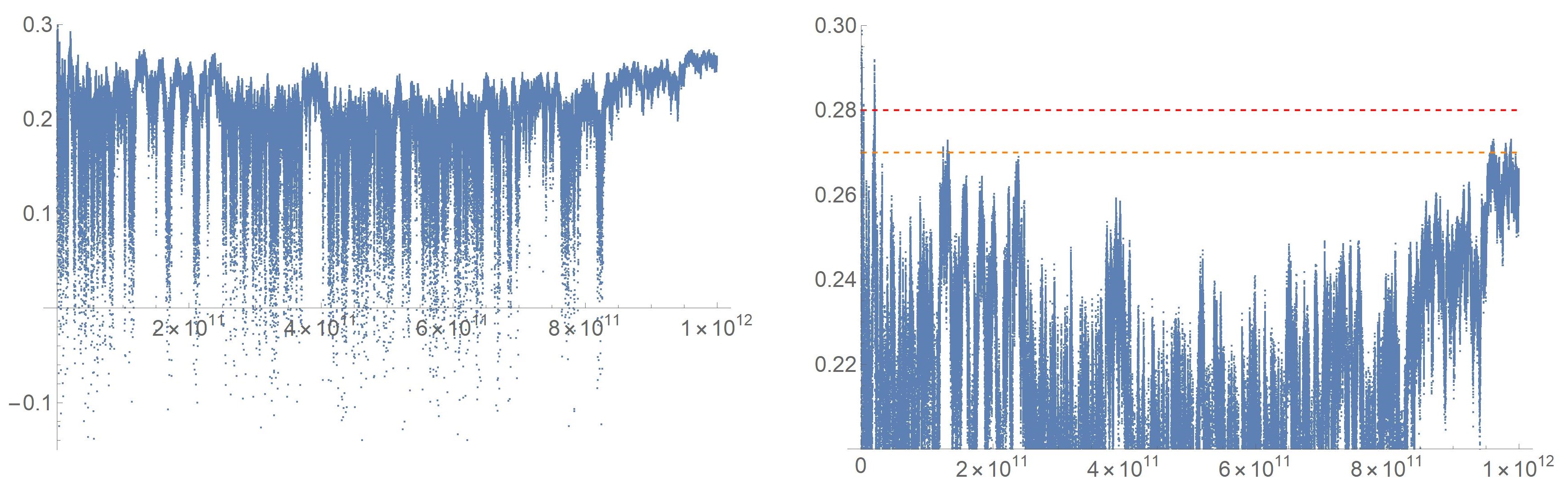}
	\caption{Value of $\alpha(p_k)$ for every $10^5$-th prime between 1 and $10^{12}:$ full-range of values on left, restricting to $\alpha(p_k)\in[0.2,0.3]$ on right.}
	\label{fig:errorestimate}
\end{figure}

%%%%%%%%%%%%%%%%%%%%%%%%%%%%%%%%%%%%%%%%%%%%%%%%%%%%%%%%%%%%%%%%%%%%%%
\section{Prime gaps on the number trail}\label{sec:D1andD2}

On the traditional number line the most widely used representation for the distribution of prime numbers are the prime gap functions
\begin{equation*}
D_k^1=p_{k+1}-p_k \;\text{ and }\; D_k^2=D_{k+1}^1-D_k^1.
\end{equation*}
Especially $D_k^1$ has continuously received immense attention with results establishing large gaps between primes \cite{Taoetal2014arXiv, West31}, small gaps~\cite{PintzI,PintzII2010}, and limit points of $D^1_k/\log p_k$ \cite{Banks2016,Pintz2018Gaps}. Zhang~\cite{Zhang2014BoundedGaps} made a big breakthrough by proving that $D^1_k$ was bounded from above by a constant for infinitely many $k$. The original constant of $7\times 10^7$ has been greatly reduced by work of the Polymath Project~\cite{Polymath1,Polymath2} and Maynard \cite{Maynard2015}. This list only gives a glimpse, it is far from being exhaustive.

On the other hand, there are long-standing conjectures which are still open today. The twin prime conjecture asserts that $D^1_k=2$ for infinitely many $k$. Polignac's conjecture is even stronger, stating that for any positive even number $N$ there are infinitely many $k$ such that $D^1_k=N$.

Here we propose an alternative approach to study the distribution of prime numbers by looking at the prime gaps along the number trail. Analogous to the prime gap functions $D^1$ and $D^2$, we define the differences $\mathcal D^1$ and $\mathcal D^2$ along the number trail with $L_\infty$ to be
\begin{equation*}\label{def:PrimegapOnNumbertrail}
\mathcal D_k^1:= L_\infty(p_{k+1})-L_\infty(p_k)  \;\text{ and }\; \mathcal D_k^2:= \mathcal D_{k+1}^1 -\mathcal D_k^1.
\end{equation*}
Histograms already reveal stark differences between $D^1,D^2$ and $\mathcal D^1, \mathcal D^2$. Figure~\ref{fig:Histogram_D1_DD1} shows the histograms of $D^1$ and $\mathcal D^1$ side-by-side taking prime numbers up to $N\leq 10^{12}$, while Figure~\ref{fig:Histogram_D2_DD2} shows $D^2$ and $\mathcal D^2$. Appendix~\ref{sec:data} contains tables of the numerical data used in the figures.

\begin{figure}[h]
	\centering
	\begin{subfigure}[b]{0.49\textwidth}
		\includegraphics[width=0.957\textwidth]{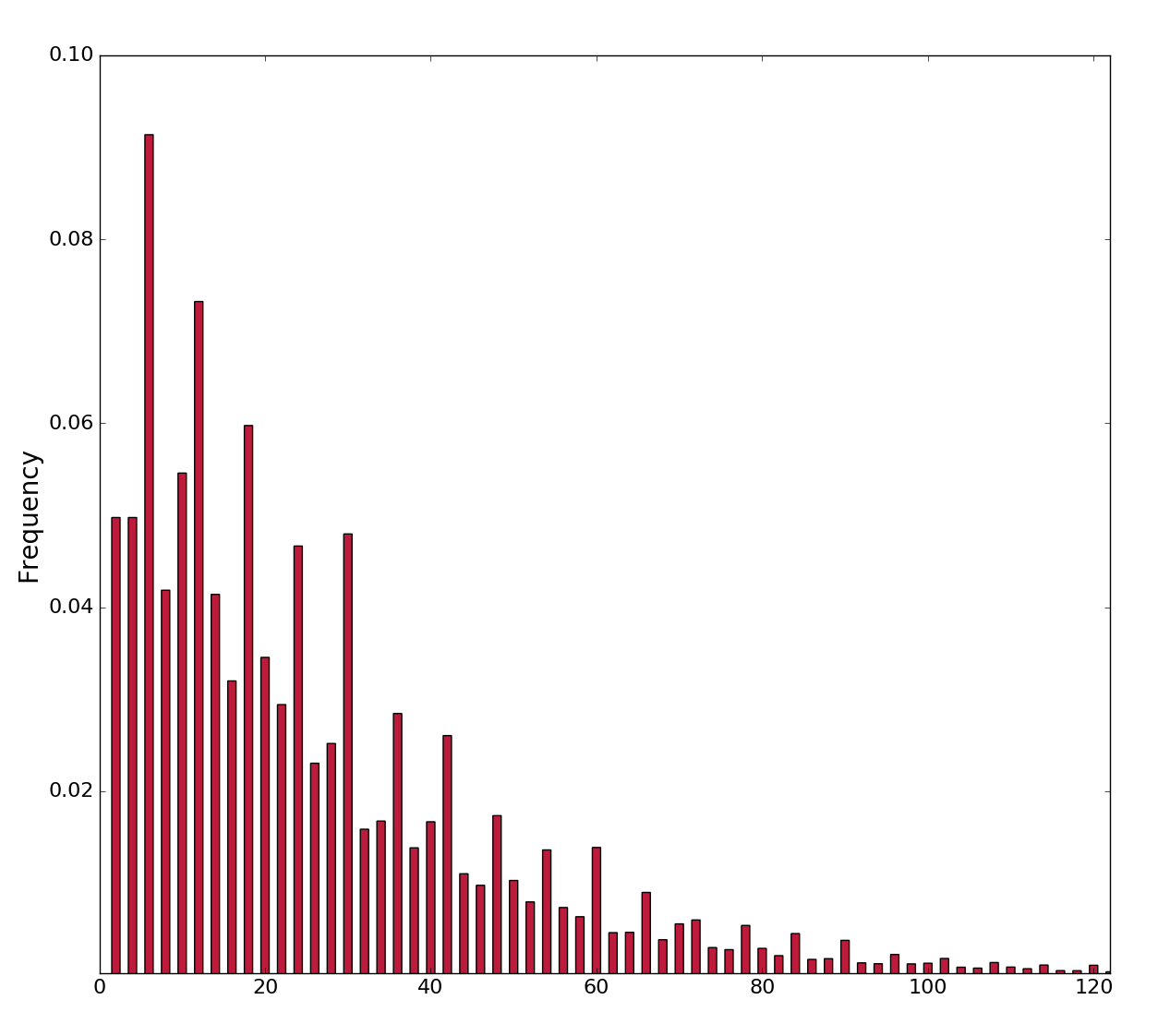}
		\caption{$D_k^1=p_{k+1}-p_k$}
		\label{fig:Hist_D1_1e12}
	\end{subfigure}
	\begin{subfigure}[b]{0.49\textwidth}
		\includegraphics[width=0.98\textwidth]{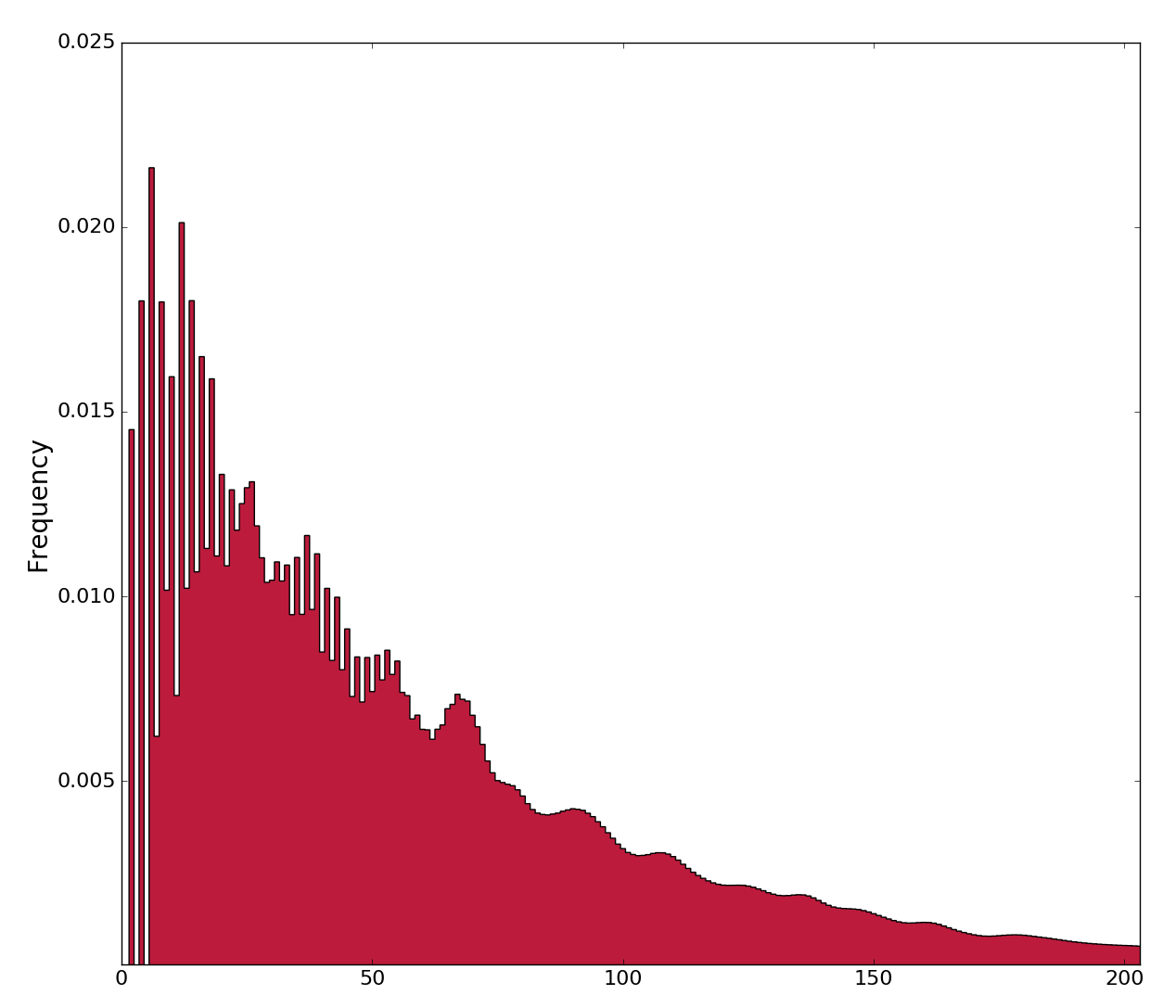}
		\caption{$\mathcal D_k^1= L_\infty(p_{k+1})-L_\infty(p_k)$}
		\label{fig:Hist_D2_1e12}
	\end{subfigure}
	\caption{Histogram of first order differences between consecutive prime numbers up to $N\leq 10^{12}$ on the number line (left) and on the number trail (right). }\label{fig:Histogram_D1_DD1}
\end{figure}

The semi-regular spiked structure of the histograms of $D^1$ and $D^2$ has been shown~\cite{Ares2006} to be attributed to the fact that for every prime number $p$, except 2, $p=\pm 1\!\mod 6$. These spikes are replaced by an intricate, non-repeating fine structure on the  $\mathcal D^1$ and $\mathcal D^2$ histograms with numerous, differently shaped local peaks. Most notably, as becomes better visible in Figure~\ref{fig:Histogram_DD2}~(B) the cap of the $\mathcal D^2$ histogram is not the highest peak, it has two tiny local maxima at $\pm1$, but there are two more, symmetrical and significantly higher maxima at $\pm5$ and $\pm7$the cap of the $\mathcal D^2$ histogram is not the highest peak, it has two tiny local maxima at $\pm1$, but there are two more, symmetrical and significantly higher maxima at $\pm5$ and $\pm7$.

Equally apparent difference between $D^1, D^2$ and $\mathcal D^1, \mathcal D^2$ is that the former only take even values (except for the gap between 2 and 3), whereas the latter can also take odd values.

\begin{figure}[h]
	\centering
	\begin{subfigure}[b]{0.49\textwidth}
		\includegraphics[width=0.95\textwidth]{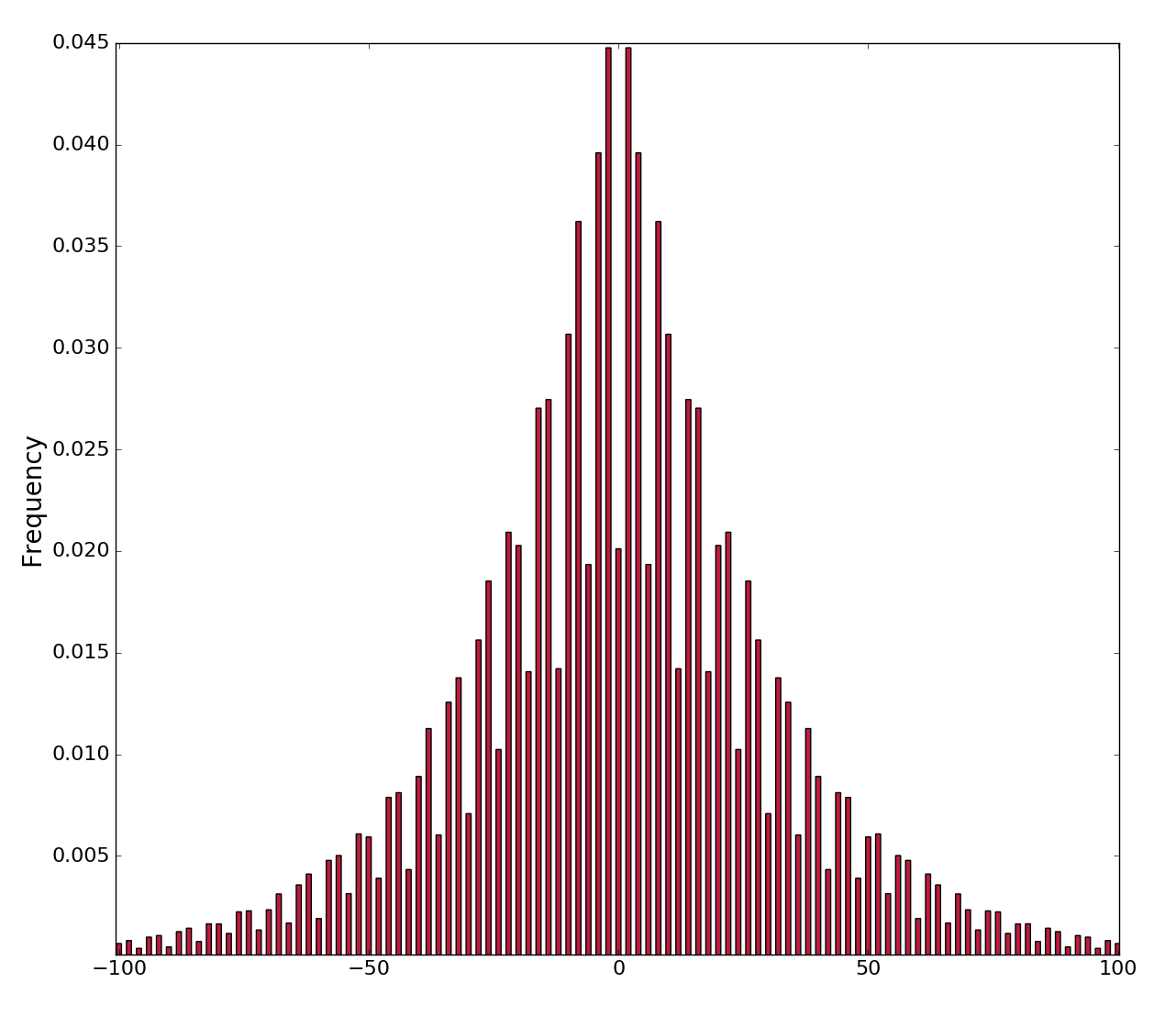}
		\caption{$ D_k^2= D_{k+1}^1- D_k^1$}
		\label{fig:Hist_DD1_1e12}
	\end{subfigure}
	\begin{subfigure}[b]{0.49\textwidth}
		\includegraphics[width=0.95\textwidth]{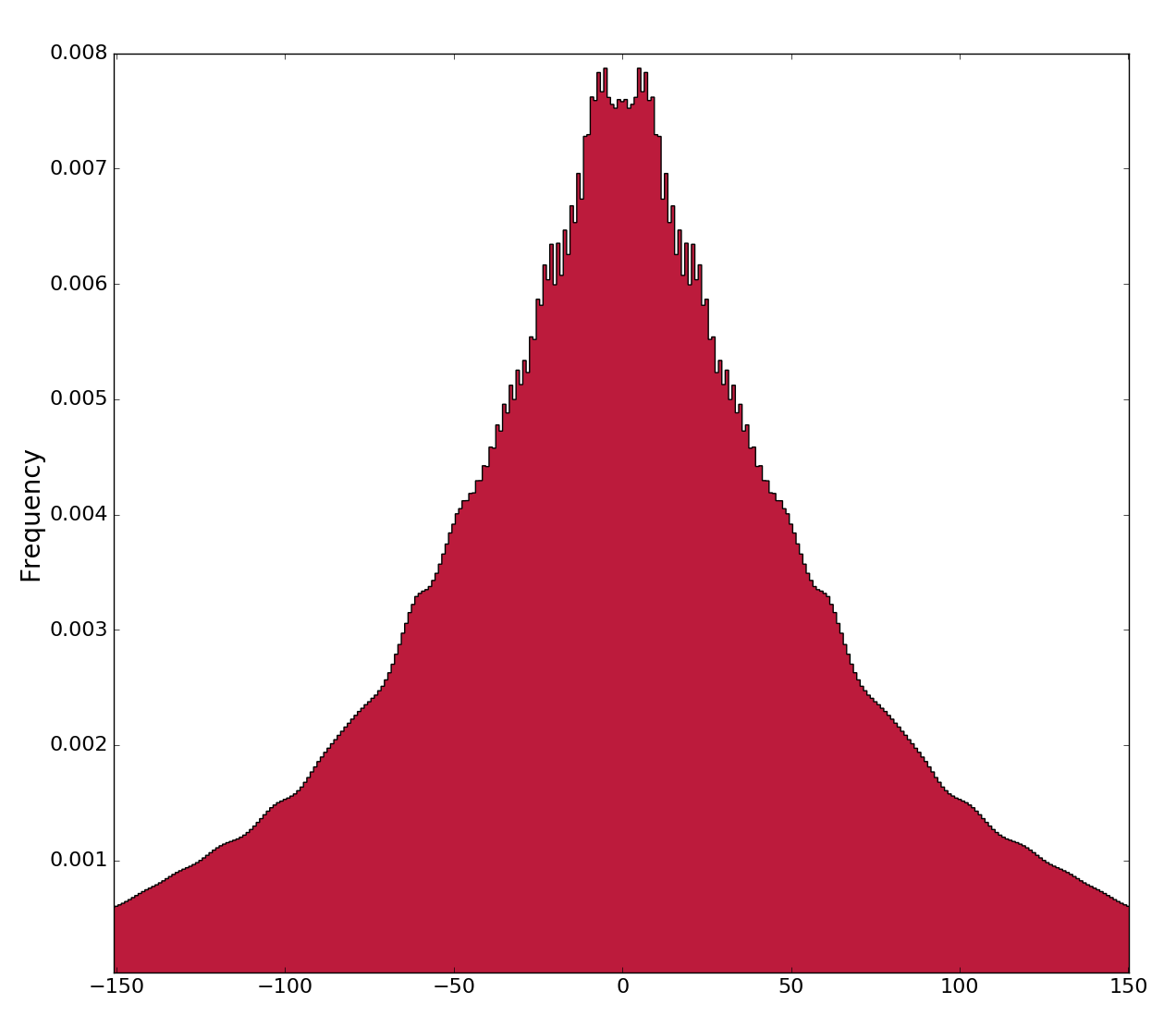}
		\caption{$\mathcal D_k^2=\mathcal D_{k+1}^1-\mathcal D_k^1$}
		\label{fig:Hist_DD1_1e12_zoom}
	\end{subfigure}
	\caption{Histogram of second order differences between consecutive prime numbers up to $N\leq 10^{12}$ on the number line (left) and on the number trail (right).}
	\label{fig:Histogram_D2_DD2}
\end{figure}

\begin{figure}[h]
	\centering
	\begin{subfigure}[b]{0.51\textwidth}
		\includegraphics[width=0.95\textwidth]{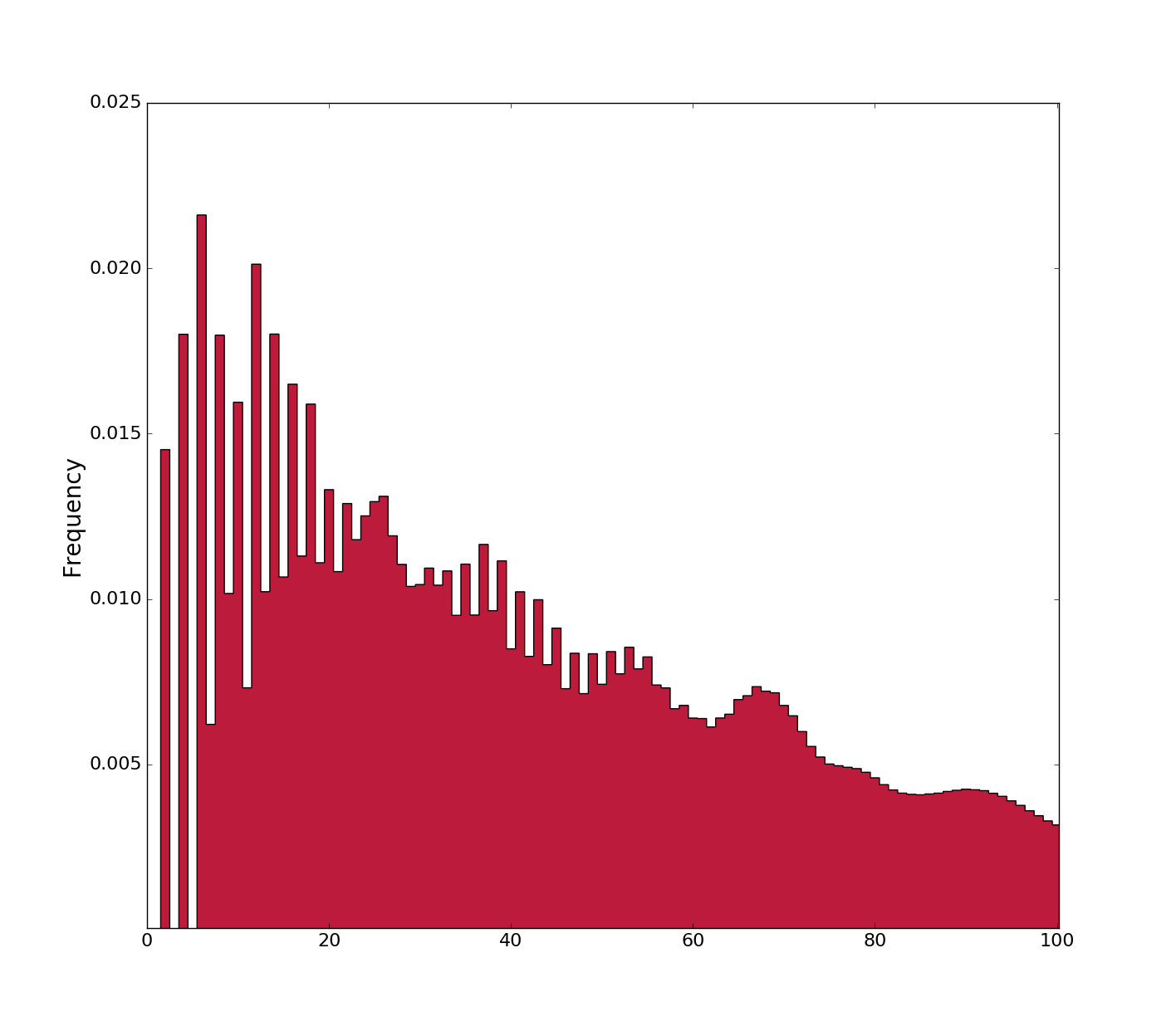}
		\caption{$\mathcal D_k^1$, close-up}
		\label{fig:Hist_DD2_1e12}
	\end{subfigure}
	\begin{subfigure}[b]{0.48\textwidth}
		\includegraphics[width=\textwidth]{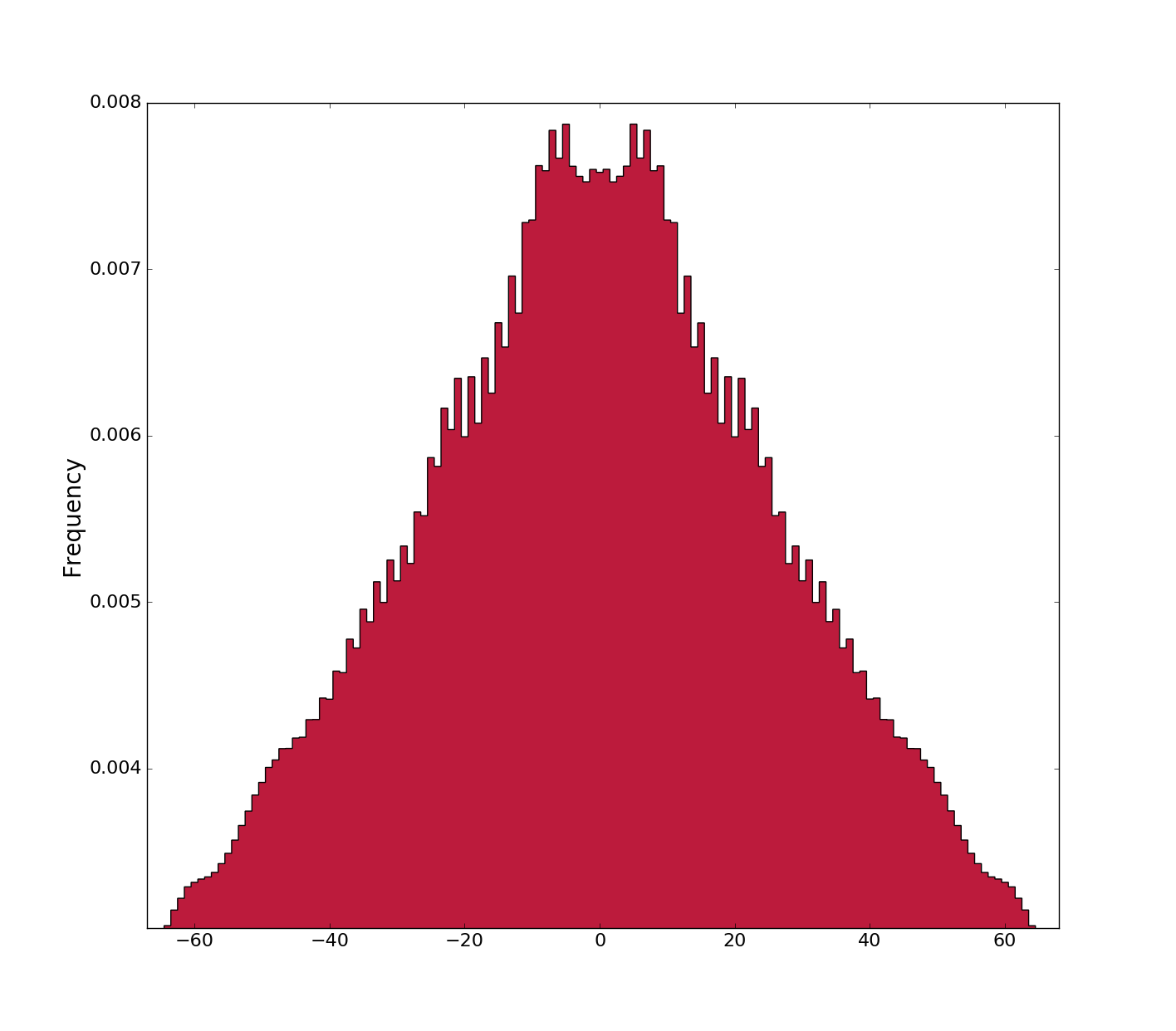}
		\caption{$\mathcal D_k^2$, close-up}
		\label{fig:Hist_DD2_1e12_zoom}
	\end{subfigure}
	\caption{Close-up of $\mathcal D_k^1$ and $\mathcal D_k^2$, showing the local maxima.}
	\label{fig:Histogram_DD2}
\end{figure}

\begin{claim}\label{claim:D1not35}
	$\mathcal D^1$ takes arbitrarily large values. However, $\mathcal D^1$ never takes the values 3 or 5.
\end{claim}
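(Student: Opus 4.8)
The plan is to work directly with the gap representation
\begin{equation*}
\mathcal D_k^1 = L_\infty(p_{k+1}) - L_\infty(p_k) = \sum_{M=p_k+1}^{p_{k+1}} \max\{\|M\|_\infty, \|M-1\|_\infty\},
\end{equation*}
writing $g := p_{k+1}-p_k$ for the gap length and introducing the ``height profile'' $h_j := \|p_k+j\|_\infty$ for $0\le j\le g$, so that $h_0=h_g=1$ (both endpoints are prime) and $\mathcal D_k^1 = \sum_{j=1}^g \max\{h_{j-1},h_j\}$. Since every $M\ge 2$ has $\|M\|_\infty\ge 1$, each summand is at least $1$, which already gives the crude bound $\mathcal D_k^1 \ge g$. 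The first assertion is then immediate: prime gaps are unbounded (for instance $n!+2,\dots,n!+n$ are all composite, forcing a gap of length $\ge n-1$), so $\mathcal D_k^1\ge g$ takes arbitrarily large values.

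For the second assertion I would first cut down the admissible gap lengths. For $p_k\ge 3$ both $p_k$ and $p_{k+1}$ are odd, so $g$ is even; the only odd gap is $g=1$, occurring solely for $(p_k,p_{k+1})=(2,3)$. Combined with $\mathcal D_k^1\ge g$, an equality $\mathcal D_k^1=3$ forces $g\in\{1,2\}$ and $\mathcal D_k^1=5$ forces $g\in\{1,2,4\}$ (the odd values $g=3,5$ being impossible). Hence it suffices to analyse the gap lengths $g=1,2,4$, and in particular to show $\mathcal D_k^1\neq 3,5$ in each.

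The cases $g=1,2$ are quick: $g=1$ gives $\mathcal D_k^1=1$, and for a twin-prime gap $g=2$ one has $h_0=h_2=1$, so $\mathcal D_k^1 = \max\{1,h_1\}+\max\{h_1,1\} = 2h_1$ is \emph{even} and therefore neither $3$ nor $5$. The crux is $g=4$, where I claim $\mathcal D_k^1\ge 6$, which rules out $5$ (and a fortiori $3$). Here $p_k+1$ and $p_k+3$ are two consecutive even numbers, so exactly one of them is divisible by $4$ and thus has $\|\cdot\|_\infty\ge 2$; say $h_i\ge 2$ with $i\in\{1,3\}$. The key observation is that $h_i$ sits inside two consecutive maxima of the sum, namely $\max\{h_{i-1},h_i\}$ and $\max\{h_i,h_{i+1}\}$ (note $i+1\le 4=g$), so these two terms each contribute at least $h_i\ge 2$, i.e.\ at least $4$ together, while the remaining two summands contribute at least $1$ each; hence $\mathcal D_k^1\ge 6$. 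The only genuine obstacle is isolating this ``divisibility by $4$ forces two large neighbouring summands'' mechanism for $g=4$; everything else reduces to parity bookkeeping and the elementary bound $\mathcal D_k^1\ge g$.
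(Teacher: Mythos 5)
Your proposal is correct and follows essentially the same route as the paper: the bound $\mathcal D_k^1\geq p_{k+1}-p_k$ combined with unboundedness of prime gaps for the first assertion, and for the second a reduction to gaps $1$, $2$, $4$, with the twin-prime case handled by the evenness of $2\|p_k+1\|_\infty$ and the gap-$4$ case by the divisibility-by-$4$ observation giving $\mathcal D_k^1\geq 2+2+1+1>5$. The only cosmetic differences are your explicit height-profile notation and your inclusion of a proof that prime gaps are unbounded, which the paper simply cites as known.
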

\begin{proof}
	The first assertion simply follows from the fact that $L_{\infty}(p_{k+1})-L_{\infty}(p_k)\geq p_{k+1}-p_k=D^1_k$ and $D^1_k$ takes arbitrarily large values. 
	
	For the second assertion we can assume that $p_k\neq 2$ (since $\mathcal D_1^1=1$). Since $\mathcal D_k^1\geq p_{k+1}-p_k$, $\mathcal D_k^1$ could equal 3 if and only if $p_{k+1}=p_k+2$ and similarly $\mathcal D_k^1$ could equal 5 if and only if $p_{k+1}=p_k+2$ or $p_{k+1}=p_k+4$.
	
	If $p_{k+1}=p_k+2$, then by definition \eqref{def:Linfty}:
	\begin{equation}\label{eq:D^1twinprime}
	\mathcal D_k^1= \max\{\|p_k\|_\infty, \|p_k+1\|_\infty\} + \max\{\|p_k+1\|_\infty, \|p_k+2\|_\infty\} = 2 \|p_k+1\|_\infty,
	\end{equation}
	which is clearly an even number.
	
	If $p_{k+1}=p_k+4$, then either $p_k+1$ or $p_k+3$ is divisible by 4, hence has $\ell_\infty$ norm at least $2$ and so $\mathcal D_k^1\geq 2+2+1+1 >5$.
\end{proof}

Polignac's Conjecture for $\mathcal D^1_k$ can be formulated as follows.
\begin{conjecture}[Modified Polignac's Conjecture]
	For any $N\in\mathbb{N}\setminus\{1,3,5\}$ there are infinitely many $k$ such that $\mathcal D^1_k=N$.
\end{conjecture}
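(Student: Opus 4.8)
The plan is to reduce the conjecture to a statement about the realizability of prescribed local configurations on the number trail, and then to invoke a quantitative form of the prime $k$-tuples conjecture to produce infinitely many realizations. Writing $g=p_{k+1}-p_k$ for the gap and $a_j:=\|p_k+j\|_\infty$ for $0\le j\le g$, definition~\eqref{def:Linfty} telescopes to
\begin{equation*}
\mathcal D_k^1=\sum_{j=1}^{g}\max\{a_{j-1},a_j\},\qquad a_0=a_g=1,
\end{equation*}
so $\mathcal D_k^1$ is completely determined by the \emph{norm profile} $(a_0,\dots,a_g)$ of the gap. This profile is in turn governed by the residue of $p_k$ modulo small prime powers: fixing, for finitely many small primes $q$ and interior positions $j$, which power $q^{a}$ exactly divides $p_k+j$, pins down each $a_j$. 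Thus for every target $N$ it suffices to (i) design an \emph{admissible profile} --- a gap length $g$ together with congruence conditions forcing $\sum_{j=1}^g\max\{a_{j-1},a_j\}=N$ while keeping the two endpoints prime and every interior position composite --- and (ii) show that the resulting constellation is realized by infinitely many $p_k$.

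For step (i) I would exhibit explicit pattern families. Every even $N=2m$ is handled by twin primes ($g=2$), where the identity~\eqref{eq:D^1twinprime} reduces the task to producing infinitely many twin primes $(p_k,p_k+2)$ with $\|p_k+1\|_\infty=m$; this is arranged by demanding $2^{m}\mid p_k+1$ but $2^{m+1}\nmid p_k+1$, with the odd complementary factor squarefree. Odd $N\ge 7$ forces $g\ge 4$ and an odd value of $\sum_j(\max\{a_{j-1},a_j\}-1)$; the explicit gap $7,8,9,10,11$ with profile $(1,3,2,1,1)$, giving $\mathcal D^1=9$, is the prototype. More generally, planting high prime-powers at one or two adjacent interior positions of an otherwise squarefree gap --- say norms $c\ge d\ge 2$ at neighbouring positions, which changes the sum by $2c+d-3$ --- yields a flexible family whose parity and magnitude can be tuned to hit every $N\ge 2$ other than $3$ and $5$, the small cases being settled by a finite direct check. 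The exclusion of $\{1,3,5\}$ is exactly right: $3$ and $5$ are never attained by Claim~\ref{claim:D1not35}, while $\mathcal D^1=1$ can only come from the isolated gap $2\to 3$ and therefore never occurs \emph{infinitely often}.

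For step (ii) the natural engine is the Hardy--Littlewood prime $k$-tuples conjecture in the form permitting congruence side-conditions (equivalently Dickson's conjecture, or Schinzel's Hypothesis~H with the appropriate local factors), applied to the pair of linear forms $x,\,x+g$ subject to the forced divisibilities on $x+1,\dots,x+g-1$. A convenient feature is that forcing each interior value to be divisible by a prime power automatically makes it composite once it exceeds that prime, so no additional sieving is needed to exclude interior primes and thereby guarantee that $p_k,p_{k+1}$ are genuinely consecutive. One then only has to verify admissibility of the constellation, i.e. that the imposed congruences leave, modulo every small prime, at least one residue class in which $x$ and $x+g$ can both be prime; for each fixed pattern this is a direct local computation.

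The main obstacle is that the statement is genuinely out of reach unconditionally: already $N=2$ subsumes the twin prime conjecture (indeed twin primes with $p+1$ squarefree), so any proof must be conditional on a strong prime-producing hypothesis. Even granting such a hypothesis, the delicate point is step (i): the norm profile must be prescribed \emph{exactly}, which means layering squarefree-type constraints (non-divisibility by higher prime powers, across all primes) on top of the prime-tuple conditions at the endpoints --- a mixed prime/sieve condition whose density positivity must hold \emph{uniformly} over the family of patterns needed to sweep out all $N$. A cleaner route would be to impose only the minimal divisibility that forces each $\max\{a_{j-1},a_j\}$ to its target, and then argue that a positive proportion of completions inherit the exact profile; but establishing this uniformly in $N$, and ruling out the possibility that large $N$ demand ever-longer gaps with no admissible realization, is where the real difficulty lies.
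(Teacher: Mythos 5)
You cannot be graded against a paper proof here, because the paper contains none: the statement is posed precisely as an open conjecture, and it strictly subsumes the twin prime conjecture (by \eqref{eq:D^1twinprime} and Claim~\ref{claim:D1not35}, $\mathcal D_k^1=2$ forces $p_{k+1}=p_k+2$ with $p_k+1$ squarefree), so no unconditional proof is currently possible and your proposal, as you yourself state, is a conditional program rather than a proof. That said, your structural analysis is correct and faithful to the paper's own observations: the telescoping identity $\mathcal D_k^1=\sum_{j=1}^{g}\max\{a_{j-1},a_j\}$ with $a_0=a_g=1$, the parity argument showing even $N=2m$ comes from twins with $\|p_k+1\|_\infty=m$ (matching \eqref{eq:D^1twinprime} and the paper's characterization of $\mathcal D^1\in\{2,4,6\}$), the fact that odd $N$ forces $g\geq 4$, the increment $2c+d-3$ from planting adjacent norms $c\geq d\geq 2$ (with $d$ even to flip parity, giving $N=2c+3\geq 7$ at $g=4$), and the explanation of why exactly $\{1,3,5\}$ are excluded are all sound.

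The genuine gap --- beyond the unavoidable conditionality --- is one you only partially flag: Dickson's conjecture, Schinzel's Hypothesis~H, and the qualitative prime $k$-tuples conjecture assert infinitude of prime values under \emph{finitely many} congruence conditions, whereas your profiles demand \emph{exact} norms, i.e.\ an infinite conjunction of non-divisibility conditions ($q^2\nmid p_k+j$, resp.\ $q^{a_j+1}\nmid p_k+j$, for \emph{all} primes $q$ beyond the planted ones). The condition $2^m\,\|\,p_k+1$ is a single congruence and is fine, but ``odd part squarefree'' is not, and no finite congruence system can encode it. To sieve squares out of a prime constellation one needs a \emph{quantitative} tuples hypothesis with error terms uniform in the modulus, so that inclusion--exclusion over $q^2\leq z$ plus a tail bound yields a positive density of completions with the exact profile; the qualitative hypotheses you invoke do not supply this, so the argument stalls even conditionally. (By contrast, uniformity in $N$ is \emph{not} a real obstacle: each fixed $N$ needs only one fixed pattern of bounded length, and ``infinitely many $k$'' is a statement for each $N$ separately, so no uniformity over the family is required --- your closing worry overstates that difficulty while understating the sieve one.) As a reduction of the conjecture to a quantified Hardy--Littlewood-plus-sieve statement, your write-up is a reasonable and essentially correct heuristic companion to the paper's conjecture; it is not, and could not be, a proof.
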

It is also not difficult to characterize $\mathcal D^1$ for the first few small values.
\begin{claim}
	$\mathcal D_k^1$  equals (a) 2, (b) 4, or (c) 6 for some $k$ if and only if
	\begin{enumerate}[(a)]
		\item $p_{k+1}$ and $p_k$ are twin prime and $p_k+1$ is a square-free number,
		\item $p_{k+1}$ and $p_k$ are twin prime and $\|p_k+1\|_\infty=2$,
		\item $p_{k+1}$ and $p_k$ are twin prime and $\|p_k+1\|_\infty=3$ $\;$OR$\;$  $p_{k+1}=p_k+4$ and $\|p_k+2\|_\infty=1$.
	\end{enumerate}
	In general, if $p_{k+1}$ and $p_k$ are twin prime and $\|p_k+1\|_\infty=n$ then $\mathcal D_k^1=2n$.
\end{claim}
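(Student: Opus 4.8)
The plan is to work directly from the definition \eqref{def:Linfty}, writing the increment as a sum over the gap,
\[
\mathcal D_k^1 = L_\infty(p_{k+1}) - L_\infty(p_k) = \sum_{M=p_k+1}^{p_{k+1}} \max\{\|M\|_\infty,\|M-1\|_\infty\},
\]
and to control everything through the gap $g := p_{k+1}-p_k$ together with the $\ell_\infty$-norms of the composite integers strictly between the two primes. Since every summand is at least $1$, one has $\mathcal D_k^1 \geq g$; the case $p_k=2$ gives $\mathcal D_1^1 = \max\{\|3\|_\infty,\|2\|_\infty\}=1$ and is discarded, so for the remaining $p_k\geq 3$ both primes are odd and $g$ is even. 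The first, purely combinatorial, reduction is therefore that $\mathcal D_k^1=2$ forces $g=2$, while $\mathcal D_k^1=4$ forces $g\in\{2,4\}$ and $\mathcal D_k^1=6$ forces $g\in\{2,4,6\}$.

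The elimination of the larger gaps rests on the observation that among any two or three consecutive even numbers at least one is divisible by $4$ and hence has $\ell_\infty$-norm $\geq 2$, which inflates the two summands adjacent to it. If $g=4$, exactly one of the even numbers $p_k+1,p_k+3$ is a multiple of $4$, already yielding $\mathcal D_k^1\geq 2+2+1+1=6$; this rules out $g=4$ for the values $2$ and $4$. If $g=6$, one of $p_k+1,p_k+3,p_k+5$ is a multiple of $4$, and the two summands flanking it force $\mathcal D_k^1\geq 2\cdot 2 + 4\cdot 1 = 8$, ruling out $g=6$ for the value $6$. After this step the surviving gap for the values $2,4$ and for the general formula is $g=2$, and for the value $6$ it is $g\in\{2,4\}$.

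For the twin-prime case $g=2$ everything is packaged in \eqref{eq:D^1twinprime}, which gives $\mathcal D_k^1 = 2\|p_k+1\|_\infty$. This single identity proves the final ``in general'' sentence verbatim, and specialising $\|p_k+1\|_\infty = 1,2,3$ — equivalently: $p_k+1$ square-free, of norm $2$, of norm $3$ — produces case (a), case (b), and the first alternative of case (c). Each equivalence is immediate and reversible, since $2n\in\{2,4,6\}$ pins down $n$ uniquely.

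The delicate remaining point, which I expect to be the real obstacle, is the gap-$4$ branch of case (c). Setting $a=\|p_k+1\|_\infty$, $b=\|p_k+2\|_\infty$, $c=\|p_k+3\|_\infty$ and using that $p_k$ and $p_{k+1}=p_k+4$ are prime (so the two outer summands collapse to $a$ and $c$), one expands
\[
\mathcal D_k^1 = a + \max\{a,b\} + \max\{b,c\} + c .
\]
Exactly one of $a,c$ is $\geq 2$, which forces $\mathcal D_k^1\geq 6$, and equality requires that the unique multiple of $4$ among $p_k+1,p_k+3$ contribute norm \emph{exactly} $2$ while the two remaining intermediate numbers (including $p_k+2$) are square-free. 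I would split on whether $4\mid p_k+1$ or $4\mid p_k+3$, read off the norms in each sub-case, and check reversibility; the careful bookkeeping is to confirm that this local configuration matches the stated criterion, verifying in particular that no higher power of $2$ or other prime power is concealed in the even neighbour — so that the equality condition genuinely constrains the two even neighbours and not only $p_k+2$. I would cross-check the final characterisation against small explicit instances such as $p_k=19$, where $p_k+1=20$ has norm $2$ and $p_k+2=21$, $p_k+3=22$ are square-free, giving $\mathcal D_k^1=6$.
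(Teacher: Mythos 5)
Your overall strategy coincides with the paper's own proof: discard $p_k=2$, note the gap $g=p_{k+1}-p_k$ is even and $\mathcal D_k^1\geq g$, eliminate $g=4$ (for the values $2,4$) and $g=6$ (for the value $6$) via the multiple of $4$ hiding among the intermediate even numbers — this is exactly the content of Claim~\ref{claim:D1not35}, which the paper simply cites — and then settle all twin-prime cases with the identity $\mathcal D_k^1=2\|p_k+1\|_\infty$ from \eqref{eq:D^1twinprime}. All of those steps in your write-up are correct.

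The real issue is the gap-$4$ branch of (c), and here the ``careful bookkeeping'' you postponed cannot in fact be completed — which is to your credit, because your expansion $\mathcal D_k^1=a+\max\{a,b\}+\max\{b,c\}+c$ yields the \emph{correct} equality condition: the multiple of $4$ among $\{p_k+1,p_k+3\}$ must have norm exactly $2$, \emph{and} the other even intermediate must be square-free, \emph{and} $p_k+2$ must be square-free. That is strictly stronger than the printed criterion ``$p_{k+1}=p_k+4$ and $\|p_k+2\|_\infty=1$'', so the ``if'' direction of (c) as stated is false: take $p_k=13$, $p_{k+1}=17$; then $\|p_k+2\|_\infty=\|15\|_\infty=1$, yet $16=2^4$ gives $\mathcal D_k^1=1+1+4+4=10\neq 6$. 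The paper's proof only carries out the necessary direction (from $\mathcal D_k^1=6$ and gap $4$ it deduces that one of $p_k+1,p_k+3$ has norm $2$ and $p_k+2$ is square-free) and dismisses the converse as ``trivial calculations''; your analysis shows that the converse is not merely nontrivial but wrong, and that the second alternative of (c) should be replaced by the condition you derived — gap $4$, the multiple of $4$ among $\{p_k+1,p_k+3\}$ of norm exactly $2$, and the remaining two intermediate integers square-free — a configuration realised by your example $p_k=19$ ($20=2^2\cdot 5$, $21$, $22$ square-free, $\mathcal D_k^1=2+2+1+1=6$). So: same route as the paper, but your version, once completed, corrects the claim rather than proves it.
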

\begin{proof}
	We saw at the end of the proof of Claim \ref{claim:D1not35} that if  $p_{k+1}-p_k\geq4$ then $\mathcal D_k^1>5$. Hence, if $\mathcal D_k^1=2$ or $4$, then $p_{k+1}$ and $p_k$ must be twin prime. Moreover, \eqref{eq:D^1twinprime} implies that $\|p_k+1\|_\infty=1$ or $2$, respectively. If $\mathcal D_k^1=6$ then either $p_{k+1}$ and $p_k$ are twin prime and $\|p_k+1\|_\infty=3$ or $p_{k+1}=p_k+4$ ($p_{k+1}\geq p_k+6$ not possible). If $p_{k+1}=p_k+4$ then it further follows that one of $p_k+1$ or $p_k+3$ has $\ell_{\infty}$ norm equal to $2$ and so $p_k+2$ must be square-free.
	
	The other directions and the last claim are just trivial calculations.
\end{proof}

Another key difference between $D^1$ and $\mathcal D^1$ is that $\mathcal D^1$ makes a distinction between twin primes, according to the norm of the number between them. We compared the relative frequency amongst the first K twin primes $(p_k,p_{k+1})$ of the norm $\|p_k+1\|_{\infty}=\ell$ to the limiting distribution of the norm of a number between two square-free numbers given by $\lim_{n\to\infty}\Pi_n(1,\ell,1)/\sum_k\Pi_n(1,k,1)$ from Proposition~\ref{prop:31}.

Table~\ref{table:twinprime} shows that the relative frequencies of $\|p_k+1\|_{\infty}=\ell$ settle around specific constants which are not equal to the ones given by Proposition~\ref{prop:31}. Most notably, the relative frequencies are not strictly decreasing, the relative frequency of $\|p_k+1\|_{\infty}=2$ is larger than the one for $\|p_k+1\|_{\infty}=1$. A first step toward determining these constants would be to study the limiting distribution of $\|p_k+1\|_{\infty}=\ell$ along the subsequence of primes.

\begin{problem}
	Let $P_n$ denote a random prime number chosen uniformly at random from the set $\{1,2,\ldots,n\}$. Does $\|P_n-1\|_{\infty}$ converge in distribution? If so, what is the distribution of the limiting random variable?
\end{problem}

\begin{table}[h]
	\caption{Comparing $\|N\|_{\infty}=\ell$ when $\|N\pm1\|_{\infty}=1$ (first column) to the relative frequencies $\|p_k+1\|_{\infty}=\ell$ amongst the first K twin primes $(p_k,p_{k+1})$.} \label{table:twinprime}
	\begin{center}
		\begin{tabular}{c|r|llll}
			\toprule
			$\ell$ & if $\|N\pm1\|_{\infty}=1$ & $K=1\times10^8$ & $K=2\times10^8$ & $K=4\times10^8$ & $K=8\times10^8$ \\ \midrule
			\rowcolor[HTML]{E8E8E8}
			$1$ & 0.3889570915 & 0.29185522
			& 0.29183928
			& 0.291823840
			& 0.29180058625
			\\
			$2$ & 0.3156950811
			& 0.36190949
			& 0.36189782
			& 0.361889505
			& 0.361905095
			\\
			\rowcolor[HTML]{E8E8E8}
			$3$ & 0.1545003396
			& 0.18599846
			& 0.18601558
			& 0.1860339925
			& 0.186028890
			\\
			$4$ & 0.0730237294
			& 0.08559793
			& 0.085607245
			& 0.085600065
			& 0.08560440375
			\\
			\rowcolor[HTML]{E8E8E8}
			$5$ & 0.0348180734
			& 0.03929783
			& 0.039304695
			& 0.0393018075
			& 0.039307170
			\\
			$6$ & 0.0168170300
			& 0.01835525
			& 0.01835070
			& 0.0183527025
			& 0.01835503625
			\\
			\bottomrule
		\end{tabular}
	\end{center}
\end{table}

\appendix

%%%%%%%%%%%%%%%%%%%%%%%%%%%%%%%%%%%%%%%%%%%%%%%%%%%%%%
\section{Sagemath Python code used to calculate \texorpdfstring{$L_{\infty}(N)$}{L_infty(N)}}\label{sec:Code}

Computations were carried out with \textsl{Sagemath} 7.2 \cite{SageMath} running on 8 hyperthreaded 3.6 GHz i7 CPU cores, and using 32 GB of RAM and about 500 GB of harddisk storage on a 64-bit Linux desktop computer. It took about 80 days of total walltime to complete all computations up to the order of $N=10^{12}$. Hardware resources were maximized by running the computations in 20 independent batches and utilizing a hand-tuned, load-balanced parallelization protocol. The code listing below is mostly self explanatory with comments and we provide detailed explanations where necessary, referring to line numbers.

(\ref{bignumpy}) The only non-standard package we used is called \textsl{bignumpy} \cite{Bignumpy} and it is crucial for seamlessly handling very large \textsl{numpy} arrays. Our computations heavily utilize \textsl{numpy} array operations but these become prohibitive using standard \textsl{numpy} when the size of a \textsl{numpy} array exhausts available memory. \textsl{Bignumpy} provides file backed \textsl{numpy} array objects for \textsl{Python} utilizing the mmap/shared memory feature of Unix. \textsl{Bignumpy} allows for streamlined array operations on arrays multiple times larger than available memory, with a negligible slowdown compared to system swap.

(\ref{parallel}) The \textsl{@parallel} decoration applied to the \textsl{calc\_norm00} function invokes the parallel interface, which means that the single set of $(N, M)$ function arguments will be replaced by a list of multiple $(N1, M1), (N2, M2),..., (Nk, Mk)$ arguments, see (\ref{defcalcnorm00}) and (\ref{arglist}). The parallel interface will then execute, simultaneously, multiple copies of \textsl{calc\_norm00} running on multiple ($ncpus=8$) CPU cores in parallel. The argument list is automatically divided up and distributed among the parallel processes, and in the end, the results are reassembled in a single list.

(\ref{inmost}) This is the innermost loop---the rate limiting computation of integer factorization. The  \textsl{factor} function (\ref{factor}) is a wrapper around the standard \textsl{PARI} factorization routine \cite{PARIlibrary}. Note that we did not use specialized factorization algorithms suitable for certain classes of numbers, because the computation of  $L_\infty (N)$ requires the factorization of every single number $1-N$. The number of iterations in the innermost loop should be hand tuned to achive optimum balance between computation and data manipulation (see more about this below). Also note that determining whether or not a number $N$ is prime can, of course, be calculated much faster than full factorization but \textsl{factor} has already been called on $N$.

(\ref{input_beg}-\ref{input_end})  The input parameters $N\_*$ allow flexibility to compute $L_\infty (N)$ depending on the hardware configuration. $N\_min$ and $N\_max$ define a particular segment, in our computations we used 20 segments each $5*10^{10}$ long to get to $10^{12}$. The workflow is organized in three nested loops. We already mentioned the innermost loop (\ref{inmost}), which includes multiple factorizations calculated in a single call to \textsl{calc\_norm00}. The inner loop (\ref{inner}) iterates multiple calls to \textsl{calc\_norm00} using the parallel interface as explained above (\ref{parallel}). Note that the argument list (\ref{arglist}) is a standard \textsl{Python} list, which has a significant memory footprint and, therefore, the balance between the chosen values of $N\_inmost\_loop$ and $N\_inner\_loop$ is crucial. i) Their product $N\_maxmem$ should be set such that the resulting \textsl{Python} list (\ref{arglist}) fits comfortably in memory. The associated work arrays (\ref{array_beg}-\ref{array_end}) are all \textsl{bignumpy} arrays and their size is not limiting. ii) $N\_inmost\_loop$ should be set $>>1$ to make sure that computation (factorization) dominates because the overhead of data manipulation associated with a single function call to \textsl{calc\_norm00} in the parallel environment is significant. In fact, the maximum overall speedup using 8 CPU cores was limited to less than five fold.

(\ref{L00_array}) The core data set at the heart of our computations is comprised of the $L_\infty (N)$ values at prime ``stops'' along the number trail. The associated \textsl{bignumpy} array can be generated piece wise by concatenating consecuitve sub-arrays generated in a succession of batch calculations as noted above. Since the file associated with a \textsl{bignumpy} array is the exact binary copy of the array's memory image, these files can readily be concatenated using the UNIX \textsl{cat} command.

(\ref{hopseq}, \ref{cumsum}, \ref{prn_hopseq}, \ref{prn_cumsum})  $L_\infty (N)$ is computed as the cumulative sum of the sequence of ``hops'' between consecutive numbers along the number trail and, therefore, every new batch computation needs two data points from the previous calculation to start with. One is the length of the last hop and the other is the current value of $L_\infty (N)$.

(\ref{outer_beg}-\ref{outer_end}) The cumulative summation is carried out in the outer loop utilizing a number of \textsl{numpy} operations. i) First, the sequence of hops is computed for a continuous segment $N_{beg}-N_{end}$ by taking the maximum of every two adjacent infinity norm values (Chebyshev contour indices), and the values are stored in \textsl{Hop\_Sequence\_Arr} (\ref{np_max}). ii) The cumulative sum is then computed in two steps (\ref{cumsum2}, \ref{+=cumsum}). (The current values of \textsl{last\_hop\_seq} and \textsl{cumsum} are saved for the next segment (\ref{hopseq2}, \ref{hopseq3}, \ref{cumsum3}), see previous paragraph.) iii) Finally, \textsl{Prime\_bIndex\_Arr} is utilized as a binary mask (\ref{mask}) to keep only the prime stops and store them in \textsl{Prime\_Stops\_onL00\_Arr} (\ref{prime_stops}).

(\ref{arglist}) Note that the output list generated from the return values of the parallel calculation (\ref{ret_lst}) is not guaranteed to preserve the order of the input list, and must be sorted.

(\ref{beg_cumsum}-\ref{end_cumsum}) Listing of the \textsl{last\_hop\_seq} and \textsl{cumsum} values printed at lines \ref{prn_hopseq}, \ref{prn_cumsum} after completion of each of the 20 segments of the master calculation.

(\ref{beg_diffs}-\ref{end_diffs}) This section of the code reads in the concatenated file \textsl{Prime\_Stops\_onL00\_\_N=1-1000000000000.mmap} holding the $L_\infty(p)$ values at the prime ``stops'' and computes the $\mathcal D^1$ and $\mathcal D^2$ differences.

(\ref{histplot}) The final section computes and plots the $\mathcal D^1$ and $\mathcal D^2$ histograms.

\lstset{
	language=Python,
	showstringspaces=false,
	%formfeed=\newpage,
	%tabsize=4,
	keywordstyle=\color{black}\bfseries,
	commentstyle=\itshape,
	basicstyle={\ttfamily,\small},
	stringstyle=\slshape,
	numbers=left, numberstyle=\tiny,
	emph={find, calc_norm00, factor,prime_pi},
	emphstyle=\underline,
	breaklines,
	breakatwhitespace,
	breakindent=20pt,
	escapeinside={(*@}{@*)}
}
\begin{lstlisting}
import numpy as np
from bignumpy import bignumpy(*@\label{bignumpy}@*)
####################################################################

def find(name, path):
for root, dirs, files in os.walk(path):
if name in files:
return os.path.join(root, name)
####################################################################

# Calc for N: Chebyshev contour index (norm00)
@parallel(p_iter='multiprocessing', ncpus=8)(*@\label{parallel}@*)
def calc_norm00(N,M):(*@\label{defcalcnorm00}@*)

N += 1                # N is passed as a zero based array index
norm00     = []
is_a_prime = []
if N==1:
norm00     = [0]
is_a_prime = [0]

for i in xrange( max( 2,N),N+M):               # innermost loop(*@\label{inmost}@*)

f = factor( i)(*@\label{factor}@*)
tuples = [x for x   in f]
#primes = [p for p,e in f]
powers = [e for p,e in f]
norm00.append( max( powers))               # infinity norm
if len( tuples)==1 and tuples[0][1]==1:    # 'i' is a prime
is_a_prime.append( 1)
else:
is_a_prime.append( 0)

return norm00, is_a_prime(*@\label{ret_lst}@*)
####################################################################

%timeit
N_min         = 450000000001(*@\label{input_beg}@*)
N_max         = 500000000000
N_primes      = prime_pi( N_max) - prime_pi( N_min-1)
N_outer_loop  = 5*10^3
N_inner_loop  = 10^4
N_inmost_loop = 10^3
N_maxmem      = N_inner_loop * N_inmost_loop          # max array size stored in memory(*@\label{input_end}@*)
ext           = str(N_min)+"-"+str( N_max)+".mmap"    # file extension

# Chebyshev contour index array --create(*@\label{array_beg}@*)
Contour_Indx_mmap_fname = "Contour_Indx__N="+ext
Contour_Indx_Arr = bignumpy( Contour_Indx_mmap_fname, np.uint8, (N_maxmem,))

# Is it a prime? Binary index array --create
Prime_bIndex_mmap_fname = "Prime_bIndex__N="+ext
Prime_bIndex_Arr = bignumpy( Prime_bIndex_mmap_fname, np.uint8,  N_maxmem,))

# Hop sequence array --create
Hop_Sequence_mmap_fname = "Hop_Sequence__N="+ext
Hop_Sequence_Arr = bignumpy( Hop_Sequence_mmap_fname, np.uint8, (N_maxmem,))

# Temp array for data type casting --create(*@\label{array_end}@*)
Temp1_Arr = bignumpy( 'Temp1.mmap', np.int64, (N_maxmem,))
Temp2_Arr = bignumpy( 'Temp2.mmap', np.int64, (N_maxmem,))

# L00 values at prime stops --create(*@\label{L00_array}@*)
Prime_Stops_onL00_mmap_fname = "Prime_Stops_onL00__N="+ext
Prime_Stops_onL00_Arr = bignumpy( Prime_Stops_onL00_mmap_fname, np.int64, (N_primes,))

incr = N_inner_loop * N_inmost_loop
stp  = N_inmost_loop
last_prime_stop = 0
last_hop_seq    = 11(*@\label{hopseq}@*)
cumsum          = 1029766280643(*@\label{cumsum}@*)
tim  = walltime()
for i in xrange( N_outer_loop):                           # OUTER LOOP(*@\label{outer_beg}@*)

beg        = i *   incr + N_min-1     # array indx
end        = beg + incr
input_list = [(x,stp) for x in range(beg,end,stp)]    # inner loop(*@\label{inner}@*)
tuples = sorted( list( calc_norm00( input_list)))(*@\label{arglist}@*)

output_list = [ x[1][0] for x in tuples ]
flat_list = [item for sublist in output_list for item in sublist]
Contour_Indx_Arr = np.fromiter( flat_list, np.uint8)
if beg==0:
Hop_Sequence_Arr[0] = 0
else:
Hop_Sequence_Arr[0] = max( Contour_Indx_Arr[0], last_hop_seq)(*@\label{hopseq2}@*)
last_hop_seq = Contour_Indx_Arr[-1](*@\label{hopseq3}@*)
np.maximum( Contour_Indx_Arr[1:], Contour_Indx_Arr[:-1], out=Hop_Sequence_Arr[1:])(*@\label{np_max}@*)

output_list = [ x[1][1] for x in tuples ]
flat_list   = [item for sublist in output_list for item in sublist]
Prime_bIndex_Arr = np.fromiter( flat_list, np.uint8)

Temp1_Arr[:] = Hop_Sequence_Arr
Temp1_Arr.cumsum( out=Temp2_Arr)(*@\label{cumsum2}@*)
Temp2_Arr += cumsum(*@\label{+=cumsum}@*)
cumsum = Temp2_Arr[-1](*@\label{cumsum3}@*)
np.multiply( Prime_bIndex_Arr, Temp2_Arr, out=Temp1_Arr)  # binary mask(*@\label{mask}@*)
count_nonzero = np.count_nonzero( Temp1_Arr)
Prime_Stops_onL00_Arr[last_prime_stop:last_prime_stop+count_nonzero] = Temp1_Arr[np.nonzero( Temp1_Arr)](*@\label{prime_stops}@*)
last_prime_stop += count_nonzero(*@\label{outer_end}@*)

if( (i+1) % (N_outer_loop // 100)) == 0:
print str(int(100*float(i+1)/float(N_outer_loop))).rjust(3)+'%', 'Wall time = ', str(walltime(tim)).rjust(20)


#print Contour_Indx_Arr[...]
#print Prime_bIndex_Arr[...]
#print Hop_Sequence_Arr[...]

#print Prime_Stops_onL00_Arr[...]

print "last_hop_seq= ", last_hop_seq(*@\label{prn_hopseq}@*)
print "cumsum=       ", cumsum(*@\label{prn_cumsum}@*)
####################################################################

#                             last_hop_seq    cumsum(*@\label{beg_cumsum}@*)
#            1-  50000000000  11              114418475903
#  50000000001- 100000000000  11              228836951528
# 100000000001- 150000000000  11              343255427228
# 150000000001- 200000000000  12              457673901868
# 200000000001- 250000000000  12              572092378372
# 250000000001- 300000000000  11              686510853733
# 300000000001- 350000000000  11              800929329310
# 350000000001- 400000000000  13              915347805373
# 400000000001- 450000000000  11             1029766280643
# 450000000001- 500000000000  12             1144184756086
# 500000000001- 550000000000  11             1258603231525
# 550000000001- 600000000000  12             1373021707238
# 600000000001- 650000000000  11             1487440183820
# 650000000001- 700000000000  11             1601858659108
# 700000000001- 750000000000  12             1716277134939
# 750000000001- 800000000000  14             1830695610669
# 800000000001- 850000000000  11             1945114085159
# 850000000001- 900000000000  11             2059532560893
# 900000000001- 950000000000  11             2173951036235
# 950000000001-1000000000000  12             2288369511216(*@\label{end_cumsum}@*)
####################################################################


N_max = 10^12
N_primes = prime_pi( N_max)(*@\label{beg_diffs}@*)
ext = str( N_max)+".mmap"
dir = "/where/to/find/my_files/"

Prime_Stops_onL00_mmap_fname = "Prime_Stops_onL00__N=1-"+ext
Prime_Stops_onL00_Arr = bignumpy( find( Prime_Stops_onL00_mmap_fname, dir), np.int64, (N_primes,))        # L00 at prime stops --read

Prime_diff1_onL00_mmap_fname = "Prime_diff1_onL00__N=1-"+ext
Prime_diff1_onL00_Arr = bignumpy( Prime_diff1_onL00_mmap_fname, np.int16, ( N_primes-1,))                 # --create
np.subtract( Prime_Stops_onL00_Arr[1:], Prime_Stops_onL00_Arr[:-1], out=Prime_diff1_onL00_Arr)

Prime_diff2_onL00_mmap_fname = "Prime_diff2_onL00__N=1-"+ext
Prime_diff2_onL00_Arr = bignumpy( Prime_diff2_onL00_mmap_fname, np.int16, ( N_primes-2,))                 # --create
np.subtract( Prime_diff1_onL00_Arr[1:], Prime_diff1_onL00_Arr[:-1], out=Prime_diff2_onL00_Arr)(*@\label{end_diffs}@*)
####################################################################

# Plot data

import matplotlib.pyplot as plt(*@\label{histplot}@*)

E_max    = 12
N_max    = 10^12
N_primes = prime_pi( N_max)

ext = str(N_max)+".mmap"
dir = "/where/to/find/my_files/"

Prime_diff1_onL00_mmap_fname = "Prime_diff1_onL00__N=1-"+ext
Prime_diff1_onL00_Arr = bignumpy( find( Prime_diff1_onL00_mmap_fname, dir), np.int16, ( N_primes-1,))                # --read

Prime_diff2_onL00_mmap_fname = "Prime_diff2_onL00__N=1-"+ext
Prime_diff2_onL00_Arr = bignumpy( find( Prime_diff2_onL00_mmap_fname, dir), np.int16, ( N_primes-2,))                # --read

slice_diff1 = [prime_pi( 10^x)-1 for x in range( E_max+1)]
slice_diff2 = [prime_pi( 10^x)-2 for x in range( E_max+1)]

bin_edges_Prime_diff1_onL00 = np.arange( np.amin( Prime_diff1_onL00_Arr),np.amax( Prime_diff1_onL00_Arr)+2)            # '+2' to make sure rightmost bin is empty
bin_edges_Prime_diff2_onL00 = np.arange( np.amin( Prime_diff2_onL00_Arr),np.amax( Prime_diff2_onL00_Arr)+2)

# Histogram raw data:

slice_indx = 12  # >= 1
sdiff1 = slice_diff1[slice_indx]
sdiff2 = slice_diff2[slice_indx]

hist, bin_edges = np.histogram( Prime_diff1_onL00_Arr[:sdiff1], bins=bin_edges_Prime_diff1_onL00)
start = np.amin( np.nonzero(hist))
end   = np.amax( np.nonzero(hist))
np.savetxt( 'Prime_diff1_onL00_histogram__N=1-'+str( 10^slice_indx)+'.txt', hist[start:end+1],      fmt='%12d')
np.savetxt( 'Prime_diff1_onL00_bin_edges__N=1-'+str( 10^slice_indx)+'.txt', bin_edges[start:end+1], fmt='%12d')

hist, bin_edges = np.histogram( Prime_diff2_onL00_Arr[:sdiff2], bins=bin_edges_Prime_diff2_onL00)
start = np.amin( np.nonzero( hist))
end   = np.amax( np.nonzero( hist))
np.savetxt( 'Prime_diff2_onL00_histogram__N=1-'+str(10^slice_indx)+'.txt', hist[start:end+1],      fmt='%12d')
np.savetxt( 'Prime_diff2_onL00_bin_edges__N=1-'+str(10^slice_indx)+'.txt', bin_edges[start:end+1], fmt='%12d')

# Individual plots:

slice_indx = 12  # >= 1
my_color = (188/255, 27/255, 60/255)

plt.figure(1)
plt.hist( Prime_diff1_onL00_Arr[:slice_diff1[slice_indx]], bins=bin_edges_Prime_diff1_onL00, color=my_color, normed=True, align='left', histtype='stepfilled')
plt.ylabel( 'Frequency', fontsize=20)
plt.yticks( fontsize=16)
plt.xticks( fontsize=16)

plt.figure(2)
plt.hist( Prime_diff2_onL00_Arr[:slice_diff2[slice_indx]], bins=bin_edges_Prime_diff2_onL00, color=my_color, normed=True, align='left', histtype='stepfilled')
plt.ylabel( 'Frequency', fontsize=20)
plt.yticks( fontsize=16)
plt.xticks( fontsize=16)

# Multiple plots:

my_colors = [(0/255, 153/255, 153/255), (252/255, 126/255, 0/255), (251/255, 2/255, 1/255), (102/255, 205/255, 204/255), (255/255, 175/255, 103/255), (255/255, 102/255, 102/255), (0/255, 103/255, 102/255), (178/255, 86/255, 1/255), (175/255, 1/255, 2/255), (83/255, 165/255, 161/255), (246/255, 113/255, 78/255)]

plt.figure(3)
plt.hist( (Prime_diff1_onL00_Arr[:slice_diff1[2]], Prime_diff1_onL00_Arr[:slice_diff1[3]], Prime_diff1_onL00_Arr[:slice_diff1[4]], Prime_diff1_onL00_Arr[:slice_diff1[5]], Prime_diff1_onL00_Arr[:slice_diff1[6]], Prime_diff1_onL00_Arr[:slice_diff1[7]], Prime_diff1_onL00_Arr[:slice_diff1[8]], Prime_diff1_onL00_Arr[:slice_diff1[9]], Prime_diff1_onL00_Arr[:slice_diff1[10]], Prime_diff1_onL00_Arr[:slice_diff1[11]], Prime_diff1_onL00_Arr), bins=bin_edges_Prime_diff1_onL00, color=my_colors, normed=False, align='left', log=True, stacked=True, histtype='stepfilled')
plt.ylabel( 'log Frequency', fontsize=20)
plt.yticks( fontsize=16)
plt.xticks( fontsize=16)

plt.figure(4)
plt.hist( (Prime_diff2_onL00_Arr[:slice_diff2[2]], Prime_diff2_onL00_Arr[:slice_diff2[3]], Prime_diff2_onL00_Arr[:slice_diff2[4]], Prime_diff2_onL00_Arr[:slice_diff2[5]], Prime_diff2_onL00_Arr[:slice_diff2[6]], Prime_diff2_onL00_Arr[:slice_diff2[7]], Prime_diff2_onL00_Arr[:slice_diff2[8]], Prime_diff2_onL00_Arr[:slice_diff2[9]], Prime_diff2_onL00_Arr[:slice_diff2[10]], Prime_diff2_onL00_Arr[:slice_diff2[11]], Prime_diff2_onL00_Arr), bins=bin_edges_Prime_diff2_onL00, color=my_colors, normed=False, align='left', log=True, stacked=True, histtype='stepfilled')
plt.ylabel( 'log Frequency', fontsize=20)
plt.yticks( fontsize=16)
plt.xticks( fontsize=16)

plt.show()
####################################################################
\end{lstlisting}

%%%%%%%%%%%%%%%%%%%%%%%%%%%%%%%%%%%%%%%%%%%%%%%%%%%%%%%%%%%%%%%%%%%%%%%%%%%
\section{Tabulated histograms of prime gaps}\label{sec:data}
\begin{footnotesize}
	\begin{center}
		\begin{longtable}{r|rrrrrrrrrrr}
			\captionsetup{justification=centering}
			\caption{Numerical data of  histograms of  $\mathcal D^1$ differences in column one for $N\leq10^{2-12}$ tabulated in columns 2-12}\label{tab:Hist_DD1_1e2-12}\\
			\toprule
			\multicolumn{1}{c|}{$\mathcal D^1$}  & \multicolumn{1}{c}{$10^{12}$}  & \multicolumn{1}{c}{$10^{11}$}  & \multicolumn{1}{c}{$10^{10}$}  & \multicolumn{1}{c}{$10^{9}$}  & \multicolumn{1}{c}{$10^{8}$}  & \multicolumn{1}{c}{$10^{7}$}  & \multicolumn{1}{c}{$10^{6}$}  & \multicolumn{1}{c}{$10^{5}$}  & \multicolumn{1}{c}{$10^{4}$}  & \multicolumn{1}{c}{$10^{3}$}  & \multicolumn{1}{c}{$10^{2}$}  \\ \midrule
			\endfirsthead
			\multicolumn{12}{c}{\tablename\ \thetable\ -- \textit{Continued from previous page}} \\ \toprule
			\multicolumn{1}{c|}{$\mathcal D^1$}  & \multicolumn{1}{c}{$10^{12}$}  & \multicolumn{1}{c}{$10^{11}$}  & \multicolumn{1}{c}{$10^{10}$}  & \multicolumn{1}{c}{$10^{9}$}  & \multicolumn{1}{c}{$10^{8}$}  & \multicolumn{1}{c}{$10^{7}$}  & \multicolumn{1}{c}{$10^{6}$}  & \multicolumn{1}{c}{$10^{5}$}  & \multicolumn{1}{c}{$10^{4}$}  & \multicolumn{1}{c}{$10^{3}$}  & \multicolumn{1}{c}{$10^{2}$}  \\ \midrule
			\endhead
			\multicolumn{12}{r}{\textit{\small{Continued on next page}}} \\
			\endfoot
			\endlastfoot
			\rowcolor[HTML]{E8E8E8} 
			1  & 1         & 1        & 1        & 1       & 1      & 1     & 1    & 1   & 1  & 1  & 1 \\
			2  & 545836959 & 65482891 & 8000621  & 999952  & 128678 & 17238 & 2352 & 358 & 63 & 12 & 3 \\
			\rowcolor[HTML]{E8E8E8} 
			3  & 0         & 0        & 0        & 0       & 0      & 0     & 0    & 0   & 0  & 0  & 0 \\
			4  & 676962502 & 81197873 & 9918050  & 1238662 & 159572 & 21273 & 2962 & 460 & 83 & 14 & 4 \\
			\rowcolor[HTML]{E8E8E8} 
			5  & 0         & 0        & 0        & 0       & 0      & 0     & 0    & 0   & 0  & 0  & 0 \\
			6  & 812569885 & 97460711 & 11910458 & 1489205 & 191644 & 25567 & 3508 & 519 & 86 & 17 & 3 \\
			\rowcolor[HTML]{E8E8E8} 
			7  & 233485333 & 28008641 & 3421613  & 427267  & 54555  & 7218  & 1024 & 151 & 24 & 3  & 1 \\
			8  & 675924191 & 80872118 & 9851261  & 1225160 & 156491 & 20793 & 2882 & 439 & 70 & 13 & 1 \\
			\rowcolor[HTML]{E8E8E8} 
			9  & 382285674 & 45657562 & 5551012  & 689857  & 88083  & 11685 & 1644 & 241 & 34 & 10 & 2 \\
			10 & 599704882 & 71643947 & 8701213  & 1078829 & 137483 & 18181 & 2481 & 353 & 52 & 9  & 1 \\
			\rowcolor[HTML]{E8E8E8} 
			11 & 275043047 & 32785324 & 3977086  & 492050  & 62471  & 8189  & 1152 & 163 & 30 & 0  & 0 \\
			12 & 756641841 & 90135360 & 10920595 & 1351869 & 171761 & 22303 & 3026 & 420 & 63 & 4  & 1 \\
			\rowcolor[HTML]{E8E8E8} 
			13 & 384294766 & 45758060 & 5540132  & 683669  & 86297  & 11231 & 1547 & 239 & 30 & 6  & 1 \\
			14 & 677173536 & 80473232 & 9718049  & 1196206 & 150599 & 19575 & 2670 & 369 & 63 & 9  & 1 \\
			\rowcolor[HTML]{E8E8E8} 
			15 & 401018460 & 47543338 & 5723867  & 703177  & 88375  & 11521 & 1513 & 237 & 37 & 5  & 0 \\
			16 & 620181299 & 73474151 & 8841576  & 1083654 & 135551 & 17349 & 2271 & 311 & 49 & 13 & 3 \\
			\rowcolor[HTML]{E8E8E8} 
			17 & 424850184 & 50195576 & 6020340  & 735419  & 91699  & 11709 & 1583 & 224 & 27 & 4  & 1 \\
			18 & 597556733 & 70592210 & 8465897  & 1033639 & 128604 & 16478 & 2171 & 293 & 39 & 3  & 0 \\
			\rowcolor[HTML]{E8E8E8} 
			19 & 417182685 & 49185886 & 5882046  & 715767  & 88879  & 11340 & 1491 & 199 & 22 & 6  & 0 \\
			20 & 500276741 & 58952639 & 7052759  & 857727  & 106288 & 13622 & 1822 & 248 & 35 & 8  & 1 \\
			\rowcolor[HTML]{E8E8E8} 
			21 & 407144642 & 47774244 & 5683277  & 686540  & 84352  & 10560 & 1395 & 181 & 27 & 5  &   \\
			22 & 484516599 & 56852120 & 6763219  & 816597  & 100701 & 12564 & 1623 & 198 & 28 & 3  &   \\
			\rowcolor[HTML]{E8E8E8} 
			23 & 443439269 & 51900616 & 6153871  & 740041  & 90225  & 11240 & 1434 & 199 & 19 & 1  &   \\
			24 & 470479351 & 55067706 & 6528288  & 784364  & 96024  & 12067 & 1551 & 198 & 26 & 1  &   \\
			\rowcolor[HTML]{E8E8E8} 
			25 & 486683648 & 56773666 & 6700619  & 802366  & 97486  & 11964 & 1418 & 158 & 18 & 2  &   \\
			26 & 492742414 & 57478966 & 6784519  & 812971  & 98737  & 12325 & 1526 & 196 & 30 & 3  &   \\
			\rowcolor[HTML]{E8E8E8} 
			27 & 447830032 & 52126930 & 6134631  & 731493  & 88573  & 11033 & 1263 & 142 & 19 & 1  &   \\
			28 & 415327048 & 48370523 & 5701776  & 680946  & 82323  & 10102 & 1263 & 165 & 19 & 2  &   \\
			\rowcolor[HTML]{E8E8E8} 
			29 & 390303895 & 45329992 & 5320982  & 630812  & 76393  & 9259  & 1153 & 141 & 14 & 1  &   \\
			30 & 392437767 & 45575573 & 5347835  & 635587  & 76130  & 9320  & 1167 & 133 & 21 & 3  &   \\
			\rowcolor[HTML]{E8E8E8} 
			31 & 411121720 & 47566224 & 5555827  & 655573  & 78270  & 9417  & 1109 & 145 & 15 & 0  &   \\
			32 & 391706876 & 45315055 & 5292173  & 625088  & 74690  & 8998  & 1063 & 119 & 8  & 2  &   \\
			\rowcolor[HTML]{E8E8E8} 
			33 & 407916113 & 47009979 & 5467687  & 641287  & 75646  & 8971  & 1057 & 117 & 10 & 1  &   \\
			34 & 357435898 & 41178422 & 4788218  & 561466  & 66411  & 7902  & 969  & 127 & 15 & 0  &   \\
			\rowcolor[HTML]{E8E8E8} 
			35 & 415666860 & 47710551 & 5519737  & 643360  & 75712  & 9030  & 1089 & 120 & 9  & 2  &   \\
			36 & 357761163 & 41026033 & 4741553  & 552125  & 64897  & 7588  & 901  & 95  & 6  & 1  &   \\
			\rowcolor[HTML]{E8E8E8} 
			37 & 438008056 & 50053582 & 5759940  & 666505  & 77507  & 8914  & 1032 & 109 & 7  & 0  &   \\
			38 & 362734703 & 41403690 & 4757877  & 548943  & 63884  & 7324  & 856  & 110 & 11 & 0  &   \\
			\rowcolor[HTML]{E8E8E8} 
			39 & 419338332 & 47753944 & 5473818  & 631038  & 72661  & 8342  & 906  & 88  & 12 & 0  &   \\
			40 & 319372932 & 36349710 & 4161936  & 478868  & 55499  & 6364  & 687  & 72  & 9  & 1  &   \\
			\rowcolor[HTML]{E8E8E8} 
			41 & 384186364 & 43587533 & 4972936  & 570258  & 65417  & 7486  & 844  & 92  & 6  & 0  &   \\
			42 & 310770326 & 35210712 & 4008763  & 458211  & 52511  & 5895  & 656  & 69  & 6  & 0  &   \\
			\rowcolor[HTML]{E8E8E8} 
			43 & 375151082 & 42408631 & 4809769  & 548333  & 62368  & 7088  & 800  & 83  & 6  & 0  &   \\
			44 & 301328314 & 34025495 & 3854173  & 438680  & 50067  & 5531  & 665  & 72  & 6  & 0  &   \\
			\rowcolor[HTML]{E8E8E8} 
			45 & 342858056 & 38657690 & 4375235  & 495639  & 55826  & 6342  & 710  & 74  & 7  & 0  &   \\
			46 & 274048457 & 30861067 & 3483910  & 394841  & 44666  & 4925  & 534  & 58  & 4  & 0  &   \\
			\rowcolor[HTML]{E8E8E8} 
			47 & 314323046 & 35331031 & 3980879  & 448389  & 50260  & 5599  & 625  & 65  & 5  & 0  &   \\
			48 & 268461025 & 30128434 & 3390308  & 381310  & 42656  & 4740  & 519  & 50  & 2  & 0  &   \\
			\rowcolor[HTML]{E8E8E8} 
			49 & 313748331 & 35145253 & 3944099  & 442484  & 49301  & 5395  & 602  & 67  & 4  & 1  &   \\
			50 & 279059340 & 31197050 & 3493274  & 390264  & 43288  & 4677  & 502  & 43  & 3  &    &   \\
			\rowcolor[HTML]{E8E8E8} 
			51 & 316192097 & 35270467 & 3938108  & 438494  & 48632  & 5216  & 555  & 54  & 2  &    &   \\
			52 & 290982321 & 32393863 & 3612103  & 401777  & 44226  & 4687  & 490  & 39  & 4  &    &   \\
			\rowcolor[HTML]{E8E8E8} 
			53 & 321173394 & 35704308 & 3971568  & 440735  & 48442  & 5238  & 546  & 46  & 4  &    &   \\
			54 & 296555755 & 32916809 & 3654866  & 404294  & 44375  & 4865  & 487  & 50  & 1  &    &   \\
			\rowcolor[HTML]{E8E8E8} 
			55 & 310144069 & 34377195 & 3805054  & 419457  & 46030  & 4780  & 485  & 38  & 4  &    &   \\
			56 & 278244969 & 30786869 & 3402596  & 375360  & 40765  & 4250  & 377  & 23  & 1  &    &   \\
			\rowcolor[HTML]{E8E8E8} 
			57 & 274938973 & 30365686 & 3349189  & 367275  & 39855  & 4219  & 452  & 36  & 5  &    &   \\
			58 & 251238297 & 27693610 & 3046692  & 332896  & 35716  & 3799  & 390  & 23  & 3  &    &   \\
			\rowcolor[HTML]{E8E8E8} 
			59 & 255057585 & 28064094 & 3078949  & 334786  & 35881  & 3707  & 364  & 26  & 1  &    &   \\
			60 & 240625481 & 26406855 & 2893001  & 314572  & 33529  & 3453  & 356  & 27  & 4  &    &   \\
			\rowcolor[HTML]{E8E8E8} 
			61 & 240098350 & 26312891 & 2874685  & 311734  & 33303  & 3428  & 317  & 25  & 0  &    &   \\
			62 & 230501322 & 25221244 & 2747421  & 296466  & 31590  & 3203  & 317  & 32  & 2  &    &   \\
			\rowcolor[HTML]{E8E8E8} 
			63 & 240699750 & 26287308 & 2858713  & 308409  & 32549  & 3323  & 281  & 17  & 1  &    &   \\
			64 & 245034155 & 26679075 & 2893017  & 309735  & 32703  & 3363  & 310  & 34  & 1  &    &   \\
			\rowcolor[HTML]{E8E8E8} 
			65 & 261624047 & 28405280 & 3071254  & 328501  & 34295  & 3350  & 309  & 32  & 3  &    &   \\
			66 & 266059296 & 28835866 & 3106800  & 331091  & 34325  & 3517  & 320  & 23  & 1  &    &   \\
			\rowcolor[HTML]{E8E8E8} 
			67 & 276300134 & 29870623 & 3211217  & 341332  & 35612  & 3414  & 301  & 24  & 1  &    &   \\
			68 & 271214452 & 29261959 & 3136401  & 331350  & 34448  & 3383  & 321  & 20  & 0  &    &   \\
			\rowcolor[HTML]{E8E8E8} 
			69 & 269383864 & 29009679 & 3101086  & 327085  & 33620  & 3299  & 272  & 13  & 0  &    &   \\
			70 & 254946493 & 27403933 & 2922126  & 306593  & 31439  & 3079  & 282  & 23  & 2  &    &   \\
			\rowcolor[HTML]{E8E8E8} 
			71 & 243206891 & 26090253 & 2773428  & 290862  & 29670  & 3004  & 241  & 21  & 1  &    &   \\
			72 & 225296489 & 24122260 & 2558040  & 267104  & 27016  & 2579  & 232  & 14  & 2  &    &   \\
			\rowcolor[HTML]{E8E8E8} 
			73 & 208369696 & 22276198 & 2361046  & 246199  & 24927  & 2467  & 223  & 20  & 0  &    &   \\
			74 & 196338136 & 20936263 & 2209352  & 229402  & 23131  & 2200  & 198  & 12  & 0  &    &   \\
			\rowcolor[HTML]{E8E8E8} 
			75 & 188391164 & 20046979 & 2110189  & 218518  & 21830  & 1943  & 156  & 10  & 0  &    &   \\
			76 & 186442645 & 19783796 & 2074672  & 213678  & 21235  & 1923  & 166  & 10  & 2  &    &   \\
			\rowcolor[HTML]{E8E8E8} 
			77 & 184682531 & 19558506 & 2046017  & 208819  & 20672  & 1945  & 179  & 12  & 1  &    &   \\
			78 & 183172563 & 19350290 & 2022416  & 207385  & 20373  & 1881  & 148  & 7   & 0  &    &   \\
			\rowcolor[HTML]{E8E8E8} 
			79 & 178996142 & 18878770 & 1964451  & 200764  & 19730  & 1845  & 135  & 9   & 0  &    &   \\
			80 & 172704652 & 18184358 & 1890852  & 192546  & 18867  & 1728  & 138  & 7   & 0  &    &   \\ \bottomrule
		\end{longtable}
	\end{center}
\end{footnotesize}

\begin{footnotesize}
	\begin{center}
		\begin{longtable}{r|rrrrrrrrrrr}
			\captionsetup{justification=centering}
			\caption{Numerical data of  histograms of  $\mathcal D^2$ differences in column one for $N\leq10^{2-12}$ tabulated in columns 2-12}\label{tab:Hist_DD2_1e2-12}\\
			\toprule
			\multicolumn{1}{c|}{$\mathcal D^2$}  & \multicolumn{1}{c}{$10^{12}$}  & \multicolumn{1}{c}{$10^{11}$}  & \multicolumn{1}{c}{$10^{10}$}  & \multicolumn{1}{c}{$10^{9}$}  & \multicolumn{1}{c}{$10^{8}$}  & \multicolumn{1}{c}{$10^{7}$}  & \multicolumn{1}{c}{$10^{6}$}  & \multicolumn{1}{c}{$10^{5}$}  & \multicolumn{1}{c}{$10^{4}$}  & \multicolumn{1}{c}{$10^{3}$}  & \multicolumn{1}{c}{$10^{2}$}  \\ \midrule
			\endfirsthead
			\multicolumn{12}{c}{\tablename\ \thetable\ -- \textit{Continued from previous page}} \\ \toprule
			\multicolumn{1}{c|}{$\mathcal D^2$}  & \multicolumn{1}{c}{$10^{12}$}  & \multicolumn{1}{c}{$10^{11}$}  &  \multicolumn{1}{c}{$10^{10}$}  & \multicolumn{1}{c}{$10^{9}$}  & \multicolumn{1}{c}{$10^{8}$}  & \multicolumn{1}{c}{$10^{7}$}  & \multicolumn{1}{c}{$10^{6}$}  & \multicolumn{1}{c}{$10^{5}$}  & \multicolumn{1}{c}{$10^{4}$}  & \multicolumn{1}{c}{$10^{3}$}  & \multicolumn{1}{c}{$10^{2}$}  \\ \midrule
			\endhead
			\multicolumn{12}{r}{\textit{\small{Continued on next page}}} \\
			\endfoot
			\endlastfoot
			\rowcolor[HTML]{E8E8E8} 
			-60 & 124678820 & 13617925 & 1479775 & 159214 & 16731 & 1719 & 140  & 9   & 2  &    &   \\
			-59 & 125417281 & 13698994 & 1493730 & 161701 & 17398 & 1755 & 162  & 20  & 3  &    &   \\
			\rowcolor[HTML]{E8E8E8} 
			-58 & 125911011 & 13764630 & 1500292 & 162002 & 17011 & 1657 & 172  & 18  & 0  &    &   \\
			-57 & 126923083 & 13894104 & 1516473 & 164037 & 17363 & 1785 & 182  & 18  & 0  &    &   \\
			\rowcolor[HTML]{E8E8E8} 
			-56 & 128904390 & 14133007 & 1547182 & 168320 & 18180 & 1859 & 172  & 20  & 1  &    &   \\
			-55 & 131234765 & 14416090 & 1578825 & 173076 & 18466 & 1934 & 185  & 14  & 0  &    &   \\
			\rowcolor[HTML]{E8E8E8} 
			-54 & 134222294 & 14782109 & 1626127 & 177965 & 19312 & 2032 & 193  & 16  & 1  &    &   \\
			-53 & 137524028 & 15185420 & 1676289 & 183678 & 19652 & 2153 & 231  & 15  & 1  &    &   \\
			\rowcolor[HTML]{E8E8E8} 
			-52 & 140779422 & 15571660 & 1718999 & 188928 & 20752 & 2206 & 201  & 11  & 1  &    &   \\
			-51 & 144402779 & 16017844 & 1776828 & 196358 & 21781 & 2337 & 214  & 21  & 4  &    &   \\
			\rowcolor[HTML]{E8E8E8} 
			-50 & 147259221 & 16369755 & 1820487 & 201629 & 21894 & 2394 & 240  & 14  & 0  &    &   \\
			-49 & 150647376 & 16780566 & 1870272 & 208691 & 23276 & 2460 & 258  & 21  & 1  &    &   \\
			\rowcolor[HTML]{E8E8E8} 
			-48 & 152336894 & 16989890 & 1895485 & 210985 & 23352 & 2546 & 266  & 22  & 2  &    &   \\
			-47 & 154896930 & 17306325 & 1936930 & 217237 & 24137 & 2622 & 272  & 30  & 1  &    &   \\
			\rowcolor[HTML]{E8E8E8} 
			-46 & 154945318 & 17317334 & 1937210 & 216378 & 23987 & 2600 & 277  & 23  & 1  &    &   \\
			-45 & 157288655 & 17615557 & 1976344 & 221746 & 24731 & 2679 & 281  & 23  & 4  &    &   \\
			\rowcolor[HTML]{E8E8E8} 
			-44 & 157474057 & 17640425 & 1980331 & 222635 & 24737 & 2670 & 265  & 25  & 1  &    &   \\
			-43 & 161440665 & 18131460 & 2041299 & 230497 & 25939 & 2801 & 319  & 34  & 3  &    &   \\
			\rowcolor[HTML]{E8E8E8} 
			-42 & 161477126 & 18139987 & 2042247 & 230677 & 26217 & 2963 & 323  & 35  & 4  & 1  &   \\
			-41 & 166326919 & 18758031 & 2124063 & 240927 & 27317 & 3107 & 338  & 41  & 5  & 0  &   \\
			\rowcolor[HTML]{E8E8E8} 
			-40 & 166066810 & 18715681 & 2117644 & 240503 & 27342 & 2987 & 352  & 38  & 2  & 0  &   \\
			-39 & 172416970 & 19515444 & 2219822 & 253386 & 28718 & 3232 & 321  & 34  & 2  & 0  &   \\
			\rowcolor[HTML]{E8E8E8} 
			-38 & 172061911 & 19471804 & 2214742 & 252503 & 28497 & 3286 & 363  & 37  & 3  & 0  &   \\
			-37 & 179636293 & 20424829 & 2335566 & 268516 & 30951 & 3562 & 378  & 45  & 2  & 0  &   \\
			\rowcolor[HTML]{E8E8E8} 
			-36 & 177640738 & 20172741 & 2304548 & 264324 & 30438 & 3401 & 382  & 42  & 4  & 0  &   \\
			-35 & 186402059 & 21262299 & 2443733 & 282221 & 32943 & 3845 & 429  & 56  & 7  & 0  &   \\
			\rowcolor[HTML]{E8E8E8} 
			-34 & 183557880 & 20917244 & 2399966 & 277126 & 32006 & 3774 & 441  & 50  & 3  & 0  &   \\
			-33 & 192595098 & 22050424 & 2542567 & 295787 & 34423 & 4026 & 458  & 50  & 6  & 0  &   \\
			\rowcolor[HTML]{E8E8E8} 
			-32 & 187933156 & 21473919 & 2470177 & 286212 & 33446 & 3806 & 454  & 56  & 7  & 1  &   \\
			-31 & 197513541 & 22680026 & 2625029 & 305443 & 35984 & 4232 & 477  & 56  & 7  & 0  &   \\
			\rowcolor[HTML]{E8E8E8} 
			-30 & 192806659 & 22085709 & 2551227 & 296524 & 35018 & 4006 & 466  & 46  & 7  & 0  &   \\
			-29 & 200679723 & 23085997 & 2679220 & 313792 & 37043 & 4416 & 538  & 61  & 6  & 0  &   \\
			\rowcolor[HTML]{E8E8E8} 
			-28 & 196747898 & 22596987 & 2618029 & 306978 & 35944 & 4255 & 526  & 68  & 14 & 1  &   \\
			-27 & 208373342 & 24066755 & 2807751 & 331861 & 39362 & 4796 & 620  & 66  & 6  & 0  &   \\
			\rowcolor[HTML]{E8E8E8} 
			-26 & 207524517 & 23975103 & 2798797 & 330124 & 39711 & 4907 & 595  & 71  & 9  & 2  &   \\
			-25 & 220660508 & 25634863 & 3011643 & 358793 & 43228 & 5235 & 652  & 72  & 12 & 1  &   \\
			\rowcolor[HTML]{E8E8E8} 
			-24 & 218679154 & 25398221 & 2985061 & 355705 & 42957 & 5207 & 683  & 95  & 13 & 2  &   \\
			-23 & 231838351 & 27072426 & 3201247 & 383263 & 46861 & 5759 & 698  & 93  & 10 & 0  &   \\
			\rowcolor[HTML]{E8E8E8} 
			-22 & 227001776 & 26473195 & 3125624 & 374095 & 45595 & 5669 & 717  & 72  & 8  & 0  &   \\
			-21 & 238581530 & 27942303 & 3315655 & 399252 & 49078 & 6132 & 772  & 93  & 9  & 0  &   \\
			\rowcolor[HTML]{E8E8E8} 
			-20 & 225379188 & 26275304 & 3104488 & 371187 & 45314 & 5658 & 734  & 101 & 12 & 1  &   \\
			-19 & 238927515 & 27983835 & 3319062 & 400185 & 49312 & 6217 & 745  & 98  & 12 & 4  &   \\
			\rowcolor[HTML]{E8E8E8} 
			-18 & 228438575 & 26674122 & 3153715 & 379285 & 46182 & 5764 & 753  & 99  & 14 & 1  &   \\
			-17 & 243203220 & 28525498 & 3391278 & 410922 & 50370 & 6350 & 821  & 124 & 21 & 2  &   \\
			\rowcolor[HTML]{E8E8E8} 
			-16 & 235237016 & 27535204 & 3268627 & 393297 & 48604 & 6120 & 770  & 113 & 20 & 1  &   \\
			-15 & 251141295 & 29529930 & 3521444 & 425681 & 52563 & 6507 & 855  & 108 & 13 & 0  &   \\
			\rowcolor[HTML]{E8E8E8} 
			-14 & 245657729 & 28854709 & 3434713 & 415428 & 51396 & 6518 & 811  & 115 & 17 & 3  & 1 \\
			-13 & 261697331 & 30871436 & 3694997 & 449396 & 56052 & 7063 & 941  & 133 & 16 & 1  & 0 \\
			\rowcolor[HTML]{E8E8E8} 
			-12 & 253325086 & 29845788 & 3566511 & 433328 & 53567 & 6944 & 953  & 137 & 18 & 5  & 0 \\
			-11 & 273796317 & 32442924 & 3903869 & 479799 & 59764 & 7739 & 1018 & 146 & 16 & 3  & 1 \\
			\rowcolor[HTML]{E8E8E8} 
			-10 & 274354895 & 32570409 & 3933248 & 483817 & 61117 & 7859 & 1090 & 158 & 22 & 3  & 1 \\
			-9  & 286644016 & 34099599 & 4120679 & 508154 & 64219 & 8408 & 1124 & 162 & 25 & 5  & 0 \\
			\rowcolor[HTML]{E8E8E8} 
			-8  & 285492760 & 34048670 & 4132705 & 512306 & 65401 & 8539 & 1218 & 157 & 30 & 3  & 1 \\
			-7  & 294625344 & 35172331 & 4274938 & 530226 & 67242 & 8802 & 1238 & 178 & 36 & 10 & 0 \\
			\rowcolor[HTML]{E8E8E8} 
			-6  & 288344140 & 34440842 & 4187603 & 519919 & 66531 & 8756 & 1258 & 179 & 32 & 10 & 2 \\
			-5  & 296011555 & 35390375 & 4307092 & 534847 & 68057 & 9124 & 1258 & 161 & 29 & 4  & 1 \\
			\rowcolor[HTML]{E8E8E8} 
			-4  & 286503794 & 34243792 & 4167061 & 518758 & 66184 & 8826 & 1225 & 179 & 22 & 1  & 0 \\
			-3  & 284220703 & 33864884 & 4102245 & 508054 & 64912 & 8449 & 1142 & 167 & 18 & 3  & 0 \\
			\rowcolor[HTML]{E8E8E8} 
			-2  & 282984904 & 33788571 & 4106387 & 509356 & 65280 & 8448 & 1188 & 181 & 28 & 8  & 2 \\
			-1  & 285790536 & 34071252 & 4131348 & 511701 & 65077 & 8515 & 1203 & 163 & 20 & 4  & 1 \\
			\rowcolor[HTML]{E8E8E8} 
			0   & 285132051 & 34090579 & 4150019 & 517031 & 66320 & 8805 & 1190 & 170 & 23 & 8  & 1 \\
			1   & 285816973 & 34079474 & 4134226 & 512001 & 64923 & 8454 & 1166 & 180 & 22 & 4  & 0 \\
			\rowcolor[HTML]{E8E8E8} 
			2   & 282965478 & 33790755 & 4105123 & 510733 & 65120 & 8741 & 1243 & 164 & 24 & 4  & 1 \\
			3   & 284247018 & 33856898 & 4102916 & 508198 & 64138 & 8292 & 1171 & 182 & 29 & 5  & 2 \\
			\rowcolor[HTML]{E8E8E8} 
			4   & 286532237 & 34254862 & 4168340 & 517632 & 66368 & 8872 & 1297 & 203 & 44 & 7  & 1 \\
			5   & 296029198 & 35386325 & 4304568 & 534186 & 68239 & 8887 & 1188 & 172 & 19 & 3  & 1 \\
			\rowcolor[HTML]{E8E8E8} 
			6   & 288364738 & 34451448 & 4192449 & 522034 & 66343 & 8860 & 1169 & 166 & 29 & 6  & 1 \\
			7   & 294642143 & 35177434 & 4275015 & 530500 & 67604 & 8935 & 1228 & 168 & 24 & 6  & 1 \\
			\rowcolor[HTML]{E8E8E8} 
			8   & 285494550 & 34054118 & 4132141 & 513234 & 65554 & 8650 & 1166 & 152 & 27 & 5  & 1 \\
			9   & 286594727 & 34088658 & 4124488 & 508351 & 64462 & 8372 & 1148 & 169 & 30 & 7  & 1 \\
			\rowcolor[HTML]{E8E8E8} 
			10  & 274348420 & 32567912 & 3931836 & 484093 & 61559 & 7998 & 1120 & 152 & 26 & 3  & 1 \\
			11  & 273777740 & 32426966 & 3899784 & 477812 & 60258 & 7824 & 1110 & 164 & 20 & 1  & 0 \\
			\rowcolor[HTML]{E8E8E8} 
			12  & 253311421 & 29839307 & 3566489 & 433427 & 53589 & 6795 & 820  & 108 & 12 & 1  & 0 \\
			13  & 261679280 & 30866198 & 3694130 & 450241 & 55652 & 6897 & 898  & 124 & 24 & 5  & 1 \\
			\rowcolor[HTML]{E8E8E8} 
			14  & 245666156 & 28870503 & 3440583 & 416527 & 51179 & 6679 & 894  & 129 & 22 & 2  & 1 \\
			15  & 251104118 & 29523322 & 3520513 & 426857 & 52579 & 6798 & 862  & 110 & 15 & 2  &   \\
			\rowcolor[HTML]{E8E8E8} 
			16  & 235222471 & 27532894 & 3263211 & 392940 & 48632 & 6059 & 786  & 93  & 9  & 0  &   \\
			17  & 243217035 & 28529132 & 3394144 & 409862 & 50337 & 6288 & 822  & 101 & 18 & 2  &   \\
			\rowcolor[HTML]{E8E8E8} 
			18  & 228433005 & 26670900 & 3154649 & 378424 & 46025 & 5816 & 738  & 97  & 14 & 2  &   \\
			19  & 238932915 & 27978916 & 3321718 & 399985 & 49124 & 6348 & 795  & 101 & 9  & 1  &   \\
			\rowcolor[HTML]{E8E8E8} 
			20  & 225357244 & 26282933 & 3105430 & 372688 & 45379 & 5720 & 683  & 92  & 20 & 0  &   \\
			21  & 238581056 & 27934903 & 3314769 & 398874 & 49033 & 6267 & 801  & 106 & 10 & 3  &   \\
			\rowcolor[HTML]{E8E8E8} 
			22  & 227016320 & 26480081 & 3126879 & 374803 & 45568 & 5598 & 702  & 115 & 15 & 1  &   \\
			23  & 231864937 & 27070736 & 3199074 & 383370 & 46714 & 5774 & 756  & 103 & 9  & 1  &   \\
			\rowcolor[HTML]{E8E8E8} 
			24  & 218674999 & 25405730 & 2986737 & 356172 & 43085 & 5357 & 638  & 78  & 10 & 2  &   \\
			25  & 220654476 & 25636190 & 3013633 & 358464 & 43490 & 5245 & 656  & 63  & 13 & 0  &   \\
			\rowcolor[HTML]{E8E8E8} 
			26  & 207550224 & 23975565 & 2797734 & 329791 & 39477 & 4686 & 556  & 66  & 6  & 0  &   \\
			27  & 208353637 & 24063968 & 2807640 & 330900 & 39448 & 4718 & 613  & 74  & 10 & 0  &   \\
			\rowcolor[HTML]{E8E8E8} 
			28  & 196716398 & 22593961 & 2621784 & 306740 & 36422 & 4337 & 502  & 49  & 6  & 2  &   \\
			29  & 200688403 & 23081340 & 2678062 & 313512 & 37251 & 4541 & 539  & 79  & 9  & 1  &   \\
			\rowcolor[HTML]{E8E8E8} 
			30  & 192822269 & 22086684 & 2552125 & 297179 & 34594 & 4073 & 467  & 52  & 4  & 0  &   \\
			31  & 197517587 & 22669608 & 2621551 & 305250 & 36005 & 4255 & 493  & 61  & 8  & 0  &   \\
			\rowcolor[HTML]{E8E8E8} 
			32  & 187932603 & 21464007 & 2468752 & 286987 & 33400 & 3951 & 494  & 60  & 4  & 0  &   \\
			33  & 192596736 & 22046542 & 2542733 & 296132 & 34594 & 4077 & 458  & 48  & 1  & 0  &   \\
			\rowcolor[HTML]{E8E8E8} 
			34  & 183600374 & 20928566 & 2398187 & 275541 & 31768 & 3651 & 440  & 60  & 5  & 0  &   \\
			35  & 186373340 & 21266926 & 2441600 & 282075 & 32839 & 3936 & 451  & 45  & 1  & 0  &   \\
			\rowcolor[HTML]{E8E8E8} 
			36  & 177654153 & 20160128 & 2302012 & 264274 & 30523 & 3503 & 392  & 45  & 6  & 1  &   \\
			37  & 179653033 & 20422240 & 2334627 & 267943 & 30676 & 3620 & 403  & 36  & 4  & 0  &   \\
			\rowcolor[HTML]{E8E8E8} 
			38  & 172064515 & 19474489 & 2214595 & 252591 & 28745 & 3279 & 338  & 38  & 5  & 0  &   \\
			39  & 172423860 & 19521029 & 2221306 & 253670 & 28972 & 3311 & 373  & 37  & 5  & 0  &   \\
			\rowcolor[HTML]{E8E8E8} 
			40  & 166104905 & 18724906 & 2117971 & 240611 & 27344 & 3091 & 371  & 43  & 3  & 0  &   \\
			41  & 166323319 & 18749509 & 2123150 & 240448 & 27089 & 3053 & 338  & 47  & 4  & 0  &   \\
			\rowcolor[HTML]{E8E8E8} 
			42  & 161484723 & 18142330 & 2044048 & 230015 & 25900 & 2796 & 310  & 29  & 1  & 0  &   \\
			43  & 161411134 & 18136861 & 2044972 & 230317 & 26167 & 2940 & 338  & 30  & 3  & 1  &   \\
			\rowcolor[HTML]{E8E8E8} 
			44  & 157504079 & 17645743 & 1979344 & 222051 & 24488 & 2675 & 286  & 27  & 3  &    &   \\
			45  & 157289722 & 17615572 & 1977617 & 222128 & 24897 & 2697 & 278  & 27  & 1  &    &   \\
			\rowcolor[HTML]{E8E8E8} 
			46  & 154924957 & 17311332 & 1937595 & 216315 & 23849 & 2569 & 266  & 18  & 1  &    &   \\
			47  & 154898752 & 17306039 & 1937480 & 216551 & 23822 & 2619 & 299  & 25  & 3  &    &   \\
			\rowcolor[HTML]{E8E8E8} 
			48  & 152317224 & 16983058 & 1896656 & 211348 & 23314 & 2505 & 236  & 34  & 3  &    &   \\
			49  & 150644552 & 16780039 & 1867095 & 207637 & 22946 & 2471 & 238  & 21  & 3  &    &   \\
			\rowcolor[HTML]{E8E8E8} 
			50  & 147271752 & 16360151 & 1817438 & 200919 & 22107 & 2344 & 225  & 20  & 0  &    &   \\
			51  & 144377149 & 16013063 & 1776781 & 195943 & 21361 & 2212 & 224  & 11  & 1  &    &   \\
			\rowcolor[HTML]{E8E8E8} 
			52  & 140821922 & 15572918 & 1723600 & 189431 & 20598 & 2164 & 207  & 23  & 0  &    &   \\
			53  & 137514881 & 15179551 & 1672906 & 183860 & 20090 & 2121 & 213  & 23  & 1  &    &   \\
			\rowcolor[HTML]{E8E8E8} 
			54  & 134230989 & 14783335 & 1627558 & 177736 & 19311 & 2036 & 199  & 21  & 1  &    &   \\
			55  & 131236967 & 14423710 & 1582500 & 172696 & 18457 & 1861 & 175  & 17  & 1  &    &   \\
			\rowcolor[HTML]{E8E8E8} 
			56  & 128906212 & 14137741 & 1546258 & 168321 & 17967 & 1829 & 186  & 13  & 1  &    &   \\
			57  & 126912794 & 13892042 & 1514923 & 163999 & 17368 & 1781 & 178  & 14  & 0  &    &   \\
			\rowcolor[HTML]{E8E8E8} 
			58  & 125903469 & 13766302 & 1499470 & 161793 & 17046 & 1688 & 158  & 10  & 0  &    &   \\
			59  & 125423037 & 13700814 & 1490810 & 160879 & 17254 & 1804 & 152  & 11  & 1  &    &   \\
			\rowcolor[HTML]{E8E8E8} 
			60  & 124678744 & 13607074 & 1477998 & 159281 & 16989 & 1674 & 156  & 22  & 2  &    &   \\ \bottomrule
		\end{longtable}
	\end{center}
\end{footnotesize}

\noindent{\bf Acknowledgement.} The authors thank J\'anos Pintz and Joseph Najnudel for some useful discussions. ITK was financially supported by a \emph{Leverhulme Trust Research Project Grant} (RPG-2019-034).

%%%%%%%%%%%%%%%%%%%%%%%%%%%%%%%%%%%%%%%%%%%%%%%%%%%%%%%%%%%%%%%%%%%%%%%%%%%
%%%%%%%%%%%%%%%%%%%%%%%%%%%%%%%%%%%%%%%%%%%%%%%%%%%%%%%%%%%%%%%%%%%%%%%%%%%

\bibliographystyle{abbrv}
\bibliography{prime_biblio}

\begin{thebibliography}{10}

\bibitem{Ares2006}
S.~Ares and M.~Castro.
\newblock Hidden structure in the randomness of the prime number sequence?
\newblock {\em Phys. A}, 360(2):285--296, 2006.

\bibitem{ArratiaBook}
R.~{Arratia}, A.~D. {Barbour}, and S.~{Tavar\'e}.
\newblock {\em {Logarithmic combinatorial structures: A probabilistic
  approach.}}
\newblock Z\"urich: European Mathematical Society (EMS), 2003.

\bibitem{Banks2016}
W.~D. Banks, T.~Freiberg, and J.~Maynard.
\newblock On limit points of the sequence of normalized prime gaps.
\newblock {\em Proc. Lond. Math. Soc.}, 113(4):515--539, 2016.

\bibitem{Brandes2015}
J.~Brandes.
\newblock Sums and differences of power-free numbers.
\newblock {\em Acta Arith.}, 169(2):169--180, 2015.

\bibitem{Carlitz1932}
L.~Carlitz.
\newblock {On a problem in additive arithmetic (II)}.
\newblock {\em Q. J. Math.}, os-3(1):273--290, 1932.

\bibitem{DietmanMarmon14}
R.~Dietmann and O.~Marmon.
\newblock The density of twins of k-free numbers.
\newblock {\em Bull. Lond. Math. Soc.}, 46(4):818--826, 2014.

\bibitem{Taoetal2014arXiv}
K.~{Ford}, B.~{Green}, S.~{Konyagin}, J.~{Maynard}, and T.~{Tao}.
\newblock {Long gaps between primes}.
\newblock {\em J. Amer. Math. Soc.}, (31):65--105, 2018.

\bibitem{Bignumpy}
{{GitHub}}.
\newblock Bignumpy.
\newblock \url{https://github.com/patmiller/bignumpy}.

\bibitem{PintzI}
D.~A. {Goldston}, J.~{Pintz}, and C.~Y. {Y{\i}ld{\i}r{\i}m}.
\newblock {Primes in tuples. I.}
\newblock {\em {Ann. Math. (2)}}, 170(2):819--862, 2009.

\bibitem{PintzII2010}
D.~A. Goldston, J.~Pintz, and C.~Y. Y{\i}ld{\i}r{\i}m.
\newblock Primes in tuples {II}.
\newblock {\em Acta Math.}, 204(1):1--47, 2010.

\bibitem{Hardy1923}
G.~H. Hardy and J.~E. Littlewood.
\newblock Some problems of {`Partitio numerorum'}; {III}: On the expression of
  a number as a sum of primes.
\newblock {\em Acta Math.}, 44(1):1--70, 1923.

\bibitem{Heath1984}
D.~Heath-Brown.
\newblock The square sieve and consecutive square-free numbers.
\newblock {\em Math. Ann.}, 266:251--260, 1984.

\bibitem{KowalskiNotes}
E.~Kowalski.
\newblock Arithmetic randonn{\'e}e: {A}n introduction to probabilistic number
  theory.
\newblock
  \url{https://people.math.ethz.ch/~kowalski/probabilistic-number-theory.pdf},
  2020.
\newblock ETH Z{\"u}rich Lecture Notes.

\bibitem{KubiliusBook}
J.~Kubilius.
\newblock {\em Probabilistic Methods in the Theory of Numbers}.
\newblock American Mathematical Society, Providence, RI, 1964.

\bibitem{Maynard2015}
J.~{Maynard}.
\newblock {Small gaps between primes.}
\newblock {\em {Ann. Math. (2)}}, 181(1):383--413, 2015.

\bibitem{mirsky1949}
L.~Mirsky.
\newblock On the frequency of pairs of square-free numbers with a given
  difference.
\newblock {\em Bull. Amer. Math. Soc.}, 55(10):936--939, 10 1949.

\bibitem{PARIlibrary}
{{PARI/GP Development}}.
\newblock {PARI} library.
\newblock \url{http://pari.math.u-bordeaux.fr/}.

\bibitem{Pintz2018Gaps}
J.~Pintz.
\newblock A note on the distribution of normalized prime gaps.
\newblock {\em Acta Arith.}, 184:413--418, 2018.

\bibitem{Polymath1}
D.~Polymath.
\newblock Variants of the selberg sieve, and bounded intervals containing many
  primes.
\newblock {\em Res. Math. Sci.}, Art. 12:83pp, 2014.

\bibitem{Polymath2}
D.~Polymath.
\newblock Erratum to: Variants of the selberg sieve, and bounded intervals
  containing many primes.
\newblock {\em Res. Math. Sci.}, Art. 15:2pp, 2015.

\bibitem{ReussarXiv2014}
T.~{Reuss}.
\newblock {Pairs of $k$-free numbers, consecutive square-full numbers}.
\newblock {\em ArXiv e-prints \emph{arXiv:1212.3150}}, 2014.

\bibitem{SageMath}
{{S}age{M}ath Foundation}.
\newblock {S}age{M}ath.
\newblock \url{http://www.sagemath.org/}.

\bibitem{UlamSpiral}
M.~L. Stein, S.~M. Ulam, and M.~B. Wells.
\newblock A visual display of some properties of the distribution of primes.
\newblock {\em The American Mathematical Monthly}, 71(5):516--520, 1964.

\bibitem{SurveyTsvetkov20019}
R.~Tsvetkov.
\newblock On the distribution of $k$-free numbers and $r$-tuples of $k$-free
  numbers. {A} survey.
\newblock {\em Notes on Number Theory and Discrete Mathematics},
  25(3):207--222, 2019.

\bibitem{TuranMatFizLap34}
P.~Tur{\'a}n.
\newblock Az eg{\'e}sz sz{\'a}mok pr{\'i}moszt{\'o}inak sz{\'a}m{\'a}r{\'o}l.
\newblock {\em Mat. {\'e}s Fiz. Lapok}, 41:103 -- 130, 1934.

\bibitem{West31}
E.~{Westzynthius}.
\newblock {\"Uber die Verteilung der Zahlen, die zu den $n$ ersten Primzahlen
  teilerfremd sind.}
\newblock {\em {Commentat. Phys.-Math.}}, 5(25):1--37, 1931.

\bibitem{Zhang2014BoundedGaps}
Y.~{Zhang}.
\newblock {Bounded gaps between primes.}
\newblock {\em {Ann. Math. (2)}}, 179(3):1121--1174, 2014.

\end{thebibliography}

\end{document}